\documentclass[11pt]{amsart}
\usepackage{bbm, amsmath, amsfonts, amsthm, amssymb,hyperref,mathrsfs,comment}
\usepackage{abstract}
\usepackage{imakeidx}
\usepackage[all]{xy}
\usepackage{mathtools}
\usepackage{caption}
\usepackage[dvipsnames]{xcolor}

\makeatletter
\newif\if@check@engine  \@check@enginetrue 
\makeatother
\usepackage[usenames,dvipsnames]{pstricks}

\newtheorem{theor}{\hspace{1cm}{\sc Theorem}}[section]
\newtheorem{utver}[theor]{\hspace{1cm}{\sc Proposition}}
\newtheorem{sledst}[theor]{\hspace{1cm}{\sc Corollary}}
\newtheorem{lemma}[theor]{\hspace{1cm}{\sc Lemma}}

\newtheorem*{utver*}{\hspace{1cm}{\sc Proposition}}
\theoremstyle{definition}

\newtheorem{assumption}[theor]{\hspace{1cm}{\sc Assumption}}

\newtheorem{rem}[theor]{\hspace{1cm}{\sc Remark}}

\newcommand{\Vol}{\mathop{\rm Vol}\nolimits}
\newcommand{\conv}{\mathop{\rm conv}\nolimits}
\newcommand{\id}{\mathop{\rm id}\nolimits}
\newcommand{\im}{\mathop{\rm im}\nolimits}
\newcommand{\ind}{\mathop{\rm ind}\nolimits}
\newcommand{\MV}{\mathop{\rm MV}\nolimits}

\newcommand{\sB}{\mathscr{B}}

\newcommand{\sS}{\mathscr{S}}
\newcommand{\sU}{\mathscr{U}}

\newcommand{\wt}{\widetilde}
\newcommand{\cP}{\mathcal{P}}

\newcommand{\cI}{\mathcal{I}}

\newcommand{\cV}{\mathcal{V}}
\newcommand{\cM}{\mathcal{M}}

\newcommand{\cL}{\mathcal{L}}

\newcommand{\uA}{\underline{A}}

\newcommand{\ttor}{(\cC)^2}

\newcommand{\sN}{\scalebox{0.6}{$\cN$}}
\newcommand{\sM}{\scalebox{0.6}{$\cM$}}
\newcommand{\sL}{\scalebox{0.7}{$\cL$}}
\newcommand{\svp}{\scalebox{0.8}{$\vp$}}
\newcommand{\inds}{\underline{\ind}_A}

\newcommand{\indnp}{\underline{\ind}_{\sN}}

\newcommand{\bn}{B_{\sN}^\star}
\newcommand{\bl}{B_{\sL}^\star}
\newcommand{\bm}{B_{\scalebox{0.6}{$\cM$}}^\star}
\newcommand{\bla}{B_{\sL\scalebox{0.7}{$,2$}}^\star}
\newcommand{\bnd}{B_{\sN\scalebox{0.7}{$,d$}}^\star}
\newcommand{\bp}{B_{\svp}}
\newcommand{\mb}{\mu_{\svp}}
\newcommand{\pin}{\pi_{\sN}}
\newcommand{\pim}{\pi_{\scalebox{0.6}{$\cM$}}}
\newcommand{\symn}{\sym_{\sN}}
\newcommand{\symna}{\sym_{\sN\scalebox{0.7}{$,A$}}}
\newcommand{\symm}{\sym_{\sM}}
\newcommand{\symmv}{\sym_{\sM,V}}
\newcommand{\symq}{\sym_{\scalebox{0.6}{$\cQ$}}}
\newcommand{\symnd}{\sym_{\sN\scalebox{0.7}{$,d$}}}
\newcommand{\gp}{G_{\svp}}
\newcommand{\rtn}{R_{\sN}^\vt}
\newcommand{\indn}{\ind_{\sN}}

\newcounter{idx}

\newcommand{\rotraise}[1]{%
  \StrLen{#1}[\slen]
  \forloop[-1]{idx}{\slen}{\value{idx}>0}{%
    \StrChar{#1}{\value{idx}}[\crtLetter]%
    \IfSubStr{tlQWERTZUIOPLKJHGFDSAYXCVBNM}{\crtLetter}
      {\raisebox{\depth}{\rotatebox{180}{\crtLetter}}}
      {\raisebox{1ex}{\rotatebox{180}{\crtLetter}}}}%
}

\DeclareMathOperator{\Log}{Log}
\DeclareMathOperator{\Arg}{Arg}

\DeclareMathOperator{\Conf}{C}
\DeclareMathOperator{\UConf}{C}

\DeclareMathOperator{\Mon}{Mon}

\newcommand{\vp}{\varphi}
\newcommand{\vt}{\vartheta}
\newcommand{\vr}{\varrho}

\addtolength{\oddsidemargin}{-0.6in}
\addtolength{\evensidemargin}{-0.6in}
\addtolength{\textwidth}{1.2in}

\renewcommand{\emph}[1]{{{\color{NavyBlue} #1}}}

\renewcommand{\wr}{{\rm \, wr\,  }}

\def\sym{\mathfrak{S}}

\def\R{\mathbb R}
\def\N{\mathbb N}
\def\cN{\mathcal N}
\def\Z{\mathbb Z}

\def\C{\mathbb C}
\def\sC{\mathscr C}
\def\CC{({\mathbb C}^\star)}
\def\cC{{\mathbb C}^\star}

\def\cQ{\mathcal Q}

\def\Ax{\underline A_1}

\emergencystretch=7pt

\makeindex[columns=3, title=Index of Notations]

\begin{document}

\title[Permuting the roots of univariate polynomials]{Permuting the roots of univariate polynomials whose coefficients depend on parameters}

\author{Alexander Esterov}
\author{Lionel Lang}

\subjclass{Primary 20F36, 14D05; Secondary 14T90, 55R80}

\maketitle

\begin{abstract}
We address two interrelated problems concerning the permutation of roots of univariate polynomials whose coefficients depend on parameters. First, we compute the Galois group of polynomials $\vp(x)\in\C[y_1,\ldots,y_k][x]$ over $\C(y_1,\ldots,y_k)$. Provided that the corresponding multivariate polynomial $\vp(x,y_1,\ldots,y_k)$ is generic with respect to its support $A\subset \Z^{k+1}$, we determine the associated Galois group for any such $A$.
Second, we determine the Galois group of systems of polynomial equations of the form $p(x,y)=q(y)=0$ where $p$ and $q$ have fixed supports $A_1\subset \Z^2$ and $A_2\subset \{0\}\times \Z$, respectively. For each problem, we determine the image of an appropriate braid monodromy map in order to compute the sought Galois group. Among the applications, we determine the Galois group of any rational function generic with respect to its support. We also provide general obstructions to the Galois group of enumerative problems over algebraic groups.
\end{abstract}

\tableofcontents

\section{Introduction}\label{sec:intro}
\subsection{Problem 1}\label{sec:resultspermuting}
Consider the univariate polynomial 
\[\emph{\vp(x)}:= c_0(y)+c_1(y)x+\cdots+c_N(y)x^N,\]
\index{phi@$\vp$} where the coefficients $c_j(y)$ are Laurent polynomials in $\emph{y}:=(y_1,\ldots,y_k)$. The projection 
\begin{equation}\label{e:projvp}
\begin{array}{rcl}
\big\lbrace (x,y)\in\CC^{k+1}:\, \vp(x)=0 \big\rbrace & \longrightarrow & \CC^k\\
(x,y)& \longmapsto& y
\end{array}
\end{equation}
is a branched covering of degree $N$\index{N@$N$}, ramified over the Zariski-closed subset
\[\emph{\sB}:=\big\lbrace y\in\CC^k:\, \#\{x\in\CC:\,\vp(x)=0\}<N \big\rbrace.\]
Beyond the degree, the most fundamental invariant of the covering \eqref{e:projvp} is its \textit{monodromy group}, which, according to \cite{Ha79}, coincides with the Galois group of $\vp(x)$ over the function field $\C(y)$. We denote this group by $\emph{\gp}$. 

Although the computation of $\gp$ is a classical problem with wide-ranging applications (see e.g. \cite{SY}), it has been carried out in only a few cases.

The most classical result, due to Galois, concerns the general polynomial of degree $N$, 
\begin{equation*}
\vp(x):=y_0+y_1x+\cdots +y_Nx^N.
\end{equation*}
It is a classical result in Galois theory, or alternatively, the consequence of a straightforward monodromy computation, that the Galois group is the full symmetric group: $\gp=\sym_N$. 

From the perspective of fewnomial theory, a natural generalisation is to consider the general polynomial supported on a finite subset $\emph{\uA} \subset \Z$,
\begin{equation}\label{e:generalpolynomialA}
\vp(x):=\sum_{j\in\uA} \ y_j\ x^j.
\end{equation} 
Let $\emph{d}$\index{d@$d$} denote the index of the affine lattice generated by $\uA$ in $\Z$. Then $\vp$ becomes a polynomial in $x^d$ after multiplication by a monomial $x^a$. Thus, the group $\emph{U_d}$\index{Ud@$U_d$} of $d^{\text{th}}$ roots of unity acts on the set of roots $\{\vp(x)=0\}$,  and the Galois group $\gp$ consists only of $U_d$-equivariant permutations. It was only recently shown that the converse also holds: the group $\gp$ is precisely the group of all $U_d$-equivariant permutations and is therefore isomorphic to the wreath product $U_d \ \wr \ \sym_{N/d}$. This was first established in \cite{DZ} using Galois theory and later reproven in \cite{EL, EL2} using topological methods. Earlier partial results are surveyed in \cite{EL2}.

To our knowledge, general computations of the group $\gp$  have thus far been limited to cases in which the coefficients $c_j(y)$ are of degree at most $1$. In this text, we compute the group $\gp$ for any polynomial $\vp$, provided it is generic with respect to its support.
Recall that the \textit{support} of $\vp$, viewed as a polynomial in $k+1$ variables, is the subset $\emph{A}\subset \Z^{n+1}$ such that 
\begin{equation*}\label{e:support}
\vp(x)= \sum_{a=(a_0,\ldots,a_k)\in A} c_a\, x^{a_0} y_1^{a_1}\cdots y_k^{a_k}
\end{equation*}
where $c_a\in\cC$ for all $a\in A$.
For convenience, we denote by $\emph{\uA}\subset\Z$\index{uA@$\uA$} the projection of $A\subset \Z^{n+1}$ on the first factor, i.e. the support of the univariate polynomial $\vp(x)$ at a generic $y\in\CC^k$. Multiplying $\vp$ by a monomial if necessary, we make the convenient assumption that $\{0,N\}\subset \uA\subset\{0,\ldots, N\}$. 

We compute $\gp$ as the monodromy group of the covering \eqref{e:projvp}. To do so, we fix a base point $\emph{y_0}\in \CC^k\setminus\sB$\index{yzero@$y_0$}, and denote its fiber by $\emph{\cN}:=\{x\in\cC: \vp(x,y_0)=0\}$\index{Nc@$\cN$}. Hence, we view $\gp$ as a subgroup of the group $\emph{\symn}$\index{symn@$\symn$} of permutations of the set $\cN$. 

Similarly as in \eqref{e:generalpolynomialA}, the group $U_d$ acts on the set $\cN$, where $\emph{d}:=\gcd(\uA)$.\index{d@$d$}
If we denote by $\emph{\symnd}\subset \symn$ the subgroup of $U_d$-equivariant permutations, then we have $\gp\subset \symnd$.
 
It turns out that the inclusion $\gp\subset \symnd$ may be strict. To see this, consider the group homomorphism \index{indnunderline@$\indnp$}
\begin{equation*}
\begin{array}{rcl}
    \emph{\indnp}\; :\; \symnd&\longrightarrow & U_d  \\
     \sigma & \longmapsto &  \prod_{x\in E}\frac{\sigma(x)}{x}
\end{array}
\end{equation*}
where $E\subset\cN$ is any subset that intersects each $U_d$-orbit in $\cN$ exactly once. Since permutations in $\symnd$ are $U_d$-equivariant, one easily verifies that the map $\indnp$ indeed takes values in $U_d$ and that its definition is independent of the choice of the set $E$.

The group $\gp$ turns out to be the preimage under $\indnp$ of a certain subgroup of $U_d$, which we now define. 
We say that $A$ is \textit{sharp} if the sets $A\cap (\{0\}\times \Z^k)$ and $A\cap (\{N\}\times \Z^k)$ each consist of a single point. In this case, the trailing and leading coefficients $c_0(y)$ and $c_N(y)$ of $\vp$ satisfy 
$$\frac{c_0(y)}{c_N(y)}=c y_1^{a_1} \cdots y_k^{a_k}$$ for some $c\in\cC$ and $(a_1,\ldots, a_k)\in \Z^k$. We define $\emph{\vt}$\index{theta@$\vt$} to be the integer $\gcd(a_1,\ldots, a_k)$, adopting the convention that $\gcd(0,\ldots,0):=0$. If $A$ is not sharp, we set $\vt:=1$. 

\begin{theor}\label{thm:maingalois}
Let $A\subset\Z^{k+1}$ be a finite set not contained in any affine line. Then there exists a Zariski-open subset $\mathscr{O}\subset \CC^A$ in the space of polynomial supported on $A$ such that, for any $\vp$ in $\mathscr{O}$, the Galois group $\gp$ is:\\[0,2cm] 
\indent
-- the full symmetric group $\symn$ if $d=1$,\\[0,2cm] 
\indent
-- the strict subgroup $\symnd\subset \symn$ if $d>1$ and $\gcd(d,\vt)=1$,\\[0,2cm] 
\indent
-- the strict subgroup $\indnp^{\;\;-1}\big(\left\langle e^{2i\pi\vt/d}\right\rangle\big)$ of $\symnd$ if $d>1$ and $\gcd(d,\vt)\neq 1$.
\end{theor}

Let us mention that the first item of the above theorem can also be obtained via the study of spatial symmetric curves (see \cite[Theorem 1.3]{EL3}).

\begin{rem}\textbf{1.}
The equivariant subgroup $\symnd\subset\symn$ is non-canonically isomorphic to the wreath product $U_d \ \wr \ \sym_{N/d}$. Via this identification, the map $\indnp$ reads as the product
\begin{equation*}
    \begin{array}{rcl}
        U_d \ \wr \ \sym_{N/d} & \longrightarrow & U_d \\
        (\xi_1,\ldots,\xi_{N/d},\sigma) & \longmapsto & \prod_j \xi_j
    \end{array}.
\end{equation*}
If we denote $\emph{g}:=d/\gcd(d,\vt)$, Theorem \ref{thm:maingalois} states that $\gp$ is isomorphic to 
\begin{equation}\label{eq:wreathgaloisgroup}
    \left\lbrace (\xi_1,\ldots,\xi_{N/d},\sigma)\in U_d\ \wr \ \sym_{N/d}: \prod (\xi_j)^g=1 \right\rbrace
\end{equation}
whenever $\vp$ belongs to $\mathscr O$.

\textbf{2.} The Galois group of any rational function $p(x)/q(x)\in \C(x)$ coincides with the group $\gp$ associated to the polynomial $\vp(x):=p(x)+y\ q(x)$. When $p(x)$ and $q(x)$ are generic with respect to their supports, Theorem \ref{thm:maingalois} determines the Galois group of $p(x)/q(x)$. In this case, the integer $d$ can be arbitrary, while $\vt$ can only be $0$ or $1$.

\textbf{3.} 
The case of rational functions discussed above illustrates the necessity of the genericity assumption  $\vp\in\mathscr{O}$ in Theorem \ref{thm:maingalois}. 
Indeed, the Galois groups of general rational functions are far more varied than those appearing in the theorem (see e.g. \cite{Kui,Neu}).  
For $k=1$, a candidate for the Zariski-open subset $\mathscr{O}$ is given in \cite[Section 2]{EL3}. The construction of $\mathscr{O}$ in loc.\ cit. admits an elementary generalisation to arbitrary $k$.
\end{rem}

In order to prove Theorem \ref{thm:maingalois}, we compute the braid monodromy group associated with the covering \eqref{e:projvp}. Braid monodromy is a classical tool for computing the fundamental groups of hypersurface complements (see \cite{EL} and the survey \cite{Lib21}).

To define braid monodromy, we consider the \textit{unordered configuration space} $\emph{\UConf_N(\cC)}$\index{CN@$\UConf_N(X)$} of subsets of $\cC$ with $N$ elements, and its fundamental group $\emph{\bn}:=\pi_1(\UConf_N(\cC), \cN)$\index{BN@$\bn$}, known as the \textit{braid group} on $N$ strands in $\cC$ (see \cite{FN, farbmarg}).
The map $\CC^k\setminus \sB \rightarrow \UConf_N(\cC)$ that assigns to each tuple $y$ the set of roots of $\vp(x,y)$ induces, at the level of fundamental groups, the \textit{braid monodromy map} 
\[\emph{\mb}: \pi_1\big(\CC^k\setminus \sB,y_0\big)\longrightarrow \bn\, .\index{muphibr@$\mb$}\]
We denote the image of $\mb$ by $\emph{\bp}$\index{Bphi@$\bp$} and refer to it as the \textit{braid monodromy group} of the covering \eqref{e:projvp}. 

Let $\emph{\pin}: \bn\rightarrow \symn$\index{piN@$\pin$} denote the natural projection sending a braid to its underlying permutation. Then the monodromy map
of the covering \eqref{e:projvp} is given by the composition $\pin\circ\mb$, and in particular
\[\pin(\bp)=\gp.\]
Our objective is therefore to compute $\bp$.

As with $\gp\subset\symn$, the inclusion $\bp\subset\bn$ may be strict. Indeed, let $\UConf_d\subset\UConf_N(\cC)$ be the subset of point configurations that are invariant under multiplication by $U_d$, and let $\emph{\bnd}:=\pi_1(\UConf_d, \cN)$\index{BNd@$\bnd$} denote the subgroup of $U_d$-invariant braids in $\bn$. In particular, we have 
\[\pin(\bnd)=\symnd.\]
Since $\{x\in\cC:\vp(x,y)=0\}\subset \UConf_d$ for any $y\in\CC^k\setminus \sB$, we have the inclusion $\bp \subset \bnd$.

The latter inclusion may also be strict. To see this, consider the multiplication map $\UConf_N(\cC)\rightarrow \cC$, $\;c\mapsto \prod_{x\in c}x$. Denote the induced map on fundamental groups by \index{indN@$\indn$}
\begin{equation}\label{eq:indN}
\emph{\indn}: \bn \rightarrow \pi_1(\cC,x_0)\simeq \Z,
\end{equation}
where $\emph{x_0}:=\prod_{x\in \cN}x$\index{xzero@$x_0$} is the image of $\cN$. Finally, define the subgroup $\emph{\rtn}:=\indn^{\;\;-1}(\vt\Z)$\index{RNtheta@$\rtn$} of $\bn$, where $\vt$ is the integer introduced before Theorem \ref{thm:maingalois}.

\begin{theor}\label{thm:main}
Let $A\subset\Z^{k+1}$ be a finite set not contained in any affine line. Then there exists a Zariski-open subset $\mathscr{O}\subset \CC^A$ in the space of polynomial supported on $A$ such that, for any $\vp$ in $\mathscr{O}$, the braid monodromy group $\bp$ is the subgroup $\bnd\cap \rtn$ of $\bn$.
\end{theor}

\begin{rem}\label{rem:line}\textbf{1.} 
If $A\subset \Z^{k+1}$ is contained in an affine line, then it is sharp and $\vp(x,y)$ is equal to $\wt\vp(x^d y_1^{a_1} \cdots y_k^{a_k})$ for some non-singular univariate polynomial $\wt\vp$, after multiplication by a monomial. The integer $\vt$ equals $N\cdot\gcd(a_1,\ldots,a_k)$ and the bifurcation set $\sB$ is the union of the coordinate hyperplanes $\{y_j=0\}$ for which $a_j\neq 0$. The image of $\mb$ is the cyclic group generated by $\tau^{\vt}$ where $\tau$ is defined in Figure \ref{fig:braids}. The corresponding Galois group is isomorphic to the cyclic group $\vt \cdot (\Z/d\Z)$.

\textbf{2.} The map $\indn$ measures the winding number of the product of the roots of $\vp$ around $0\in\C$ as $y$ traces a loop in $\CC^k\setminus\sB$. At the level of permutations, the map $\indnp$ records the reduction modulo $d$ of this winding number. This explains why the Galois group $\gp$ can be strictly smaller than $\symnd$, as described in the third item of Theorem \ref{thm:maingalois}. As we will see in Section \ref{sec:proofmaingalois}, Theorem \ref{thm:maingalois} follows from Theorem \ref{thm:main} via the commutative diagram
\begin{equation*}
    \xymatrixcolsep{3pc}
\xymatrixrowsep{2pc}
\xymatrix{
    \bnd \ar[d]_{\indn\;} \ar@{->>}[r]^{\pin} & \symnd \ar[d]^{\;\indnp} \\
	\Z \ar@{->>}[r]  & U_d	}
\end{equation*}
where the bottom arrow is the reduction modulo $p$.
\end{rem}

\subsection{Problem 2}\label{sec:resultsreducible}
Given a pair $\emph{A}:=(\emph{A_1},\emph{A_2})$\index{A@$A$} of finite subsets of $\Z^2$, we consider the general polynomial system supported on $A$, namely the system
\begin{equation}\label{e:systpb2}
 p(x,y)=q(x,y)=0,   
\end{equation}
where $(p,q)$ is a pair of Laurent polynomials in $\emph{\CC^{A}}:=\CC^{A_1}\times\CC^{A_2}$.\index{CstarA@$\CC^{A}$} The projection
\begin{equation}\label{e:projA}
\begin{array}{rcl}
\big\lbrace (x,y,p,q)\in\CC^2\times\CC^{A}:\, p(x,y)=q(x,y)=0 \big\rbrace & \longrightarrow & \CC^{A}\\[0,1cm]
(x,y,p,q)& \longmapsto& (p,q)
\end{array}
\end{equation}
is a branched covering, ramified over a Zariski-closed subset $\emph{\sB}\subset \CC^{A}$.\index{B@$\sB$} The degree of the covering \eqref{e:projA}, which we denote by $\emph{N}$\index{N@$N$}, is the mixed volume $\MV(A_1,A_2)$, according to the Bernstein-Kouchnirenko-Khovanskii Theorem.

We are interested in the monodromy group of the covering \eqref{e:projA}, which we denote by $\emph{G_A}$\index{GA@$G_A$}. Again, this group can be interpreted as a Galois group. Its computation is part of a broader research program addressed in \cite{E19, EL2}, which aims to determine the Galois group of the general system in $k$ variables supported on any tuple $(A_1,\ldots,A_k)$, where $A_j\subset \Z^k$. 
While the desired Galois group has been computed for several large classes of supports (see again \cite{E19, EL2}), the problem remains open even in the case $k=2$. 

Here, we focus on the largely unexplored case of pairs $A$ referred to as \textit{reducible} or \textit{triangular} in the literature (see \cite{EL2,BRSY}). In suitable coordinates, these are the pairs for which $A_2\subset\{0\}\times \Z$, that is, the polynomial $q$ only depends on $y$. In this text, we compute the group $G_A$ for any reducible pair $A$.

\begin{assumption}\label{assumption}
Upon a harmless affine-linear transformation on the character lattice $\Z^2$ of $\ttor$, we may assume the following.
First, we assume that the projection $\emph{\Ax}\subset \Z$\index{Ab@$\Ax$} of $A_1$ onto the first factor of $\Z^2$ is of the form $\{0,n\}\subset \Ax \subset \{0,\ldots,n\}$ for some integer $\emph{n}\geqslant 1$\index{n@$n$}. 
We also assume that $A_2$ satisfies $\{0\}\times\{0,m\}\subset A_2 \subset \{0\}\times\{0,\ldots,m\}$ for some integer $\emph{m}\geqslant 1$\index{m@$m$}. In particular, we have $N=nm$. Finally, we assume that $0\in A_1\cap A_2$. 
\end{assumption}

We compute $G_A$ as the monodromy group of the covering \eqref{e:projA}. To do so, we fix a base point $\emph{(p_0,q_0)}$\index{pzero@$(p_0,q_0)$} in $\emph{\sC_A}:=\CC^A\setminus\sB$\index{CA@$\sC_A$} and denote its fiber by $$\emph{\cN}:=\big\lbrace{(x,y)\in(\CC)^2: p_0(x,y)=q_0(y)=0\big\rbrace}.$$\index{Nc@$\cN$}
Hence, we view $G_A$ as a subgroup of $\symn$. 

The covering \eqref{e:projA} admits a geometric structure that explains why the inclusion $G_A\subset \symn$ may be strict in general. The description of this structure relies on the following ingredients. First,
let $\emph{K}:=K(A)\subset\ttor$\index{K@$K$} be the largest subgroup such that the set $\{p(x,y)=q(y)=0\}$ is invariant under multiplication by $K$, for any $(p,q)\in\CC^{A}$. 
The group $K$ is naturally isomorphic to the quotient $\Z^2/\langle A_1,A_2\rangle$ (see \cite{EL2}). Second, the covering \eqref{e:projA} maps to the covering $$\{(y,q):q(y)=0\}\mapsto q.$$ In particular, the group $G_A$ acts on the blocks of the partition of $\cN$ induced by the $m$ horizontal lines $\{(x,y):q_0(y)=0\}$. Moreover, the induced permutation on the blocks, that is, on the set of roots of $q_0$, is an element of the Galois group $\emph{G_{A_2}}$\index{GA2@$G_{A_2}$} of the general polynomial supported on $A_2$, computed in \cite{EL}.
 
Let $\emph{\symna}\subset \symn$\index{symna@$\symna$} denote the subgroup of permutations that are both $K$-equivariant and preserve the partition of $\cN$ induced by the roots of $q_0$, with the additional requirement that the induced permutation on these roots lies in $G_{A_2}$. Thus, we have $G_A\subset\symna$.

The above structure of the covering \eqref{e:projA} and its impact on the Galois group are well known (see e.g. \cite[Theorem 1.19]{E19}). In this text, we uncover an additional structure that explains the possible strictness of the inclusion $G_A\subset \symna$. To describe this structure, write 
\[p(x,y)=:\emph{c_0(y)}+\emph{c_1(y)}x+\cdots+\emph{c_n(y)}x^n\]
and define \index{v(xy)@$v$}
\[\emph{v(x,y)}:=(-1)^{1+\frac{n}{d}}c_0(y)+c_n(y)x^d,\]
where $\emph{d}:=d(A)$\index{d@$d$} denotes the integer $\gcd(\Ax)$. To any pair $(p,q)\in\sC_A$, we can now associate the auxiliary system
\begin{equation}\label{e:vietasystem2}
    v(x,y)=q(y)=0.
\end{equation}
This system relates to the original system \eqref{e:systpb2} as follows.

Since $p(x,y)=\tilde p(x^d,y)$ for some polynomial $\tilde p$, the group $U_d$ acts on $\{x:p(x,r)=0\}$ for any root $r$ of $q$. Define a $d$-slice of $p(x,r)$ to be any subset $E\subset\{x: p(x,r)=0\}$ that intersects each $U_d$-orbit exactly once, and let $\emph{x_E}:=\prod_{x\in E}x$\index{xe@$x_E$}.
By Vieta's formula, we have 
\[
(-1)^n\frac{c_0(r)}{c_n(r)}\;=\; \prod_{p(x,r)=0}x\;=\;\prod_{x\in E}\prod_{\xi^d=1}\xi x\;=\; \;\prod_{x\in E}x^d(-1)^{d+1}\;=\;(-1)^{n+\frac{n}{d}}x^d_E\ .
\]
Therefore, the point $(x_E,r)$ is a solution to \eqref{e:vietasystem2}, and conversely, every solution to \eqref{e:vietasystem2} arises in this way, for some root $r$ of $q$ and some $d$-slice $E$ of $p(x,r)$.

This relation between the two systems induces a homomorphism at the level of Galois groups. To define the Galois group of the system \eqref{e:vietasystem2}, we denote $\emph{V}:=V(A):=(\emph{V_1},A_2)$\index{V@$V$} its support, let $\emph{v_0(x,y)}$ be the polynomial associated to $p_0(x,y)$, and denote 
$$\emph{\cM}:=\{(x,y)\in\ttor:v_0(x,y)=q_0(y)=0\}.$$ 
We define the Galois group $\emph{G_V}\subset \symm$\index{GV@$G_V$} analogously to $G_A$, that is, as the monodromy group of the covering 
$\{(x,y,v,q):v=q=0\}\to (v,q)$
based at $(v_0,q_0)$. 

Given a permutation $\sigma\in\symna$, denote by $\emph{\sigma_2}$\index{sigma2@$\sigma_2$}  the permutation of the roots of $q_0$ induced by $\sigma$. Observe that for any $d$-slice $E$ of $p_0(x,r)$, the image of the subset $E\times\{r\}\subset\cN$ under $\sigma$ is of the form $E_\sigma\times\{\sigma_2(r)\}$, where $E_\sigma$ is a $d$-slice of $p_0\big(x,\sigma_2(r)\big)$. This yields a group homomorphism
\begin{equation*}
    \begin{array}{rcl}
       \emph{\inds} \; : \;  \symna & \longrightarrow & \symm  \\
       \sigma & \mapsto & \big( (x_E,y) \longmapsto (x_{E_\sigma},\sigma_2(y)) \big) 
    \end{array}
\end{equation*}
The $K$-equivariance of $\symna$ and the inclusion $U_d\times\{1\}\subset K$ ensure that $\inds$ is well-defined.

\begin{theor}\label{thm:reduciblegalois}
Let $A:=(A_1,A_2)$ be a pair such that $A_2$ lies on a line and $A_1$ does not. Then the Galois group $G_A$ equals the subgroup $\inds^{\;\;-1}(G_V)$ of $\symna$.
\end{theor}

The case where both $A_1$ and $A_2$ are contained in a line is treated in Section \ref{sec:reducibletwolines}.

\begin{rem}
The above theorem reduces the computation of $G_A$ to that of $G_V$. If $A_1$ is not sharp, we show in Theorem \ref{thm:bwgv} that $G_V$ equals the subgroup $\symmv\subset\symm$. This subgroup is defined exactly as $\symna\subset\symn$ and depends upon the integer $d(V)$ and the group $K(V)\subset\ttor$. Note that $d(V)=d$ while the inclusion $K\subset K(V)$ may be strict.

In the case where $A_1$ is sharp, the support $V$ does not satisfy the assumption of the theorem. The group $G_V$ is determined in Section \ref{sec:reducibletwolines}, as mentioned above.
\end{rem}

As in Problem 1, we deduce the Galois group $G_A$ from a certain braid monodromy group associated with the covering \eqref{e:projA}. Let $\emph{\UConf_N(\ttor)}$\index{CNCstar2@$\UConf_N(\ttor)$} denote the \textit{unordered configuration space of} $\ttor$. A natural candidate for the braid monodromy map is the homomorphism
\[ \pi_1(\sC_A)\longrightarrow \pi_1\big(\UConf_N(\ttor)\big)\]
induced by the map $(p,q)\in\sC_A \mapsto \{p=q=0\}\in \UConf_N(\ttor)$. However, the point configurations of the form $\{p=q=0\}$ in $\UConf_N(\ttor)$ have extra structure. To capture this, we consider the subset $\emph{\sU_A}:=\sU_A(K)\subset\UConf_N(\ttor)$\index{UA@$\sU_A$} of configurations that are 
\begin{equation}\label{e:defUA}
\begin{array}{l}
\text{- equidistributed among the }m\text{ connected components of }\{q(y)=0\}\subset\ttor\text{}\\
\;\text{ for some non-singular polynomial }q\in\CC^{A_2},\\[0,1cm]
\text{- invariant under multiplication by }K\subset\ttor.
\end{array}
\end{equation}
It is clear that the map $(p,q)\mapsto \{p=q=0\}$ sends $\sC_A$ into $\sU_A$. The induced map on fundamental groups is our \textit{braid monodromy map} \index{muAbr@$\mu_A$}
\begin{equation}\label{eq:mbreducible}
    \emph{\mu_A}\; : \; \pi_1\big(\sC_A,(p_0,q_0)\big)\longrightarrow \bn,
\end{equation}
where we denote $\emph{\bn}:=\pi_1(\sU_A,\cN)$\index{BN@$\bn$}. 
We refer to $\emph{B_A}:=\im(\mu_A)$\index{BA@$B_A$} as the \textit{braid monodromy group} of the covering \eqref{e:projA}. 

Let $\emph{\pin}: \bn\rightarrow \symn$\index{piN@$\pin$} denote the natural projection sending a braid to its underlying permutation. Then the monodromy map
of the covering \eqref{e:projA} is given by the composition $\pin\circ\mu_A$, and in particular
\[\pin(B_A)=G_A.\]
Our objective is therefore to compute $B_A$.

The main geometric structure of the covering \eqref{e:projA} is already accounted for in the definition of $\bn$. In particular, one can verify that $\pin(\bn)=\symna$. For the remaining structure, consider yet another auxiliary system, namely
\begin{equation}\label{e:vietasystem}
    w(x,y)=q(y)=0,
\end{equation}
where $\emph{w(x,y)}:=(-1)^{n+1}c_0(y)+c_n(y)x$\index{w@$w$}. Again, this system relates to the original one via Vieta's formula. Indeed, write every configuration in $\sU_A$ as a union $\bigcup_{q(y)=0}F_y\times\{y\}$ of $m$ subsets $F_y\subset\cC$ of size $n$, and consider the multiplication map 
\begin{equation}\label{e:prod}
    \begin{array}{rcl}
        \sU_A & \longrightarrow  &  \UConf_m(\CC^2)\\[0,2cm]
        \displaystyle \bigcup_{q(y)=0}F_y\times\{y\} & \longmapsto & \displaystyle\bigcup_{q(y)=0} \big(\prod_{x\in F_y}x,y\big)
    \end{array}.
\end{equation}
This maps sends the solution set $\{p(x,y)=q(y)=0\}$ to $\{w(x,y)=q(y)=0\}$.

This relation between the two systems induces a homomorphism at the level of braid groups. To define the braid group of \eqref{e:vietasystem}, we denote $\emph{W}:=(\emph{W_1},A_2)$\index{W@$W$} its support, let $\emph{w_0(x,y)}$ be the polynomial associated with $p_0(x,y)$, and denote $$\emph{\cL}:=\{(x,y)\in\ttor: w_0(x,y)=q_0(y)=0\}.$$\index{calL@$\cL$}
Finally, denote $\emph{\bl}:=\pi_1(\sU_W,\cL)$\index{bl@$\bl$}, where $\emph{\sU_W}:=\sU_W(K(W))$. We define the braid monodromy group $\emph{B_W}$\index{bw@$B_W$} associated with $W$ analogously to $B_A$, namely as the image of the braid monodromy map $\mu_W$.

The image of $\sU_A$ under the map \eqref{e:prod} is contained in the superset $\sU_W(\{1\})\supset\sU_W$ consisting of configurations that are equidistributed among the $m$ connected components of $\{q(y)=0\}$. Denote its fundamental group by $\emph{\bla}$\index{bla@$\bla$}, and let
\index{indA@$\ind_A$}
\begin{equation*}
    \emph{\ind_A} : \bn \longrightarrow \bla
\end{equation*}
be the map induced by \eqref{e:prod} at the level of fundamental groups. Observe that the inclusion $\sU_W\subset\sU_W(\{1\})$ induces an inclusion $\bl\hookrightarrow\bla$ at the level of fundamental groups. This allows us to consider $B_W\subset\bl$ as a subgroup of $\bla$.

\begin{theor}\label{thm:reduciblebraid}
Let $A:=(A_1,A_2)$ be a pair such that $A_2$ lies on a line and $A_1$ does not. Then the braid monodromy group $B_A$ equals the subgroup $\ind_A^{\;\;-1}(B_W)$ of $\bn$.
\end{theor}

\begin{rem}
\textbf{1.} The above theorem reduces the computation of $B_A$ to that of $B_W$. If $A_1$ is not sharp, we show in Theorem \ref{thm:bwgv} that $B_W$ equals $\bl$.
If $A_1$ is sharp, we show in the same theorem that $B_W$ isomorphic to $\pi_1(\cC) \times B_{A_2}$, where $B_{A_2}$ is the braid monodromy group of the univariate polynomial supported on $A_2$ (see \cite{EL}).

\textbf{2.} The map \eqref{e:prod} assigns to each root $r$ of $q$ the product of the roots of the polynomial $p(x,r)$ in the variable $x$. The map $\ind_A$ records the trajectory of each product as $(p,q)$ traces a loop in $\sC_A$. At the level of permutations, the map $\inds$ can be interpreted as a reduction modulo $p$ of the map $\ind_A$, reflecting the fact that the system \eqref{e:vietasystem2} is obtained from the system \eqref{e:vietasystem} via a $d$-folded covering. We will see in Section \ref{sec:pb2} that the associated braid monodromy groups $B_V$ and $B_W$ are in fact isomorphic, and that Theorem \ref{thm:reduciblegalois} follows from Theorem \ref{thm:reduciblebraid} via the commutative diagram
\begin{equation*}
    \xymatrixcolsep{3pc}
\xymatrixrowsep{2pc}
\xymatrix{
    \;B_A\; \ar[d]_{\ind_A\;} \ar@{->>}[r]^{\pin} & \;G_A\; \ar[d]^{\;\inds} \\
	\;B_W\simeq B_V\; \ar@{->>}[r]^{\pi_{\scalebox{0.6}{$\cM$}}}  & \;G_V\;	} \quad .
\end{equation*}
\end{rem}

To conclude the introduction to Problems~1 and~2, we acknowledge that this manuscript involves substantial notation. We see no convenient way to
avoid this and refer the reader to the index of notation at the end of
the paper.

\subsection{Discussions}

\subsubsection{Braid monodromy groups versus Galois groups}
There are at least three motivations for considering braid monodromy. First, the Galois groups sought in Problems~1 and~2 are obtained as natural projections of the corresponding braid monodromy groups. Second, the computation of braid monodromy groups enjoys functorial properties that the Galois groups do not possess.
 In particular, the braid monodromy groups computed here are invariant under coverings of $\ttor$. This allows us to reduce to simpler support sets $A$ in both problems (see Sections \ref{sec:red} and \ref{sec:12}). Finally, the braid monodromy group is a finer invariant of the underlying covering than its monodromy group. It gives a better approximation of the fundamental group of the base, which is a discriminant complement in both problems. This is of great interest in the context of \cite{DL} (see \cite{Lib21} for a recent account). 

\subsubsection{Towards generalisations of Zariski's Theorem}
Theorem \ref{thm:main} generalises \cite[Theorem 1]{EL}, which describes the braid monodromy of the general (univariate) polynomial supported on $\uA\subset \Z$. 
Indeed, consider a generic polynomial $\vp(x,y)$ support on the set $A:= \uA \times\{0\}\cup \uA\times\{1\}\subset \Z^2$, so that $\vp$ corresponds to a generic line in $\C^{\uA}$. By Zariski's Theorem \cite[Théorème]{Ch}, the braid monodromy group of $\uA$ is the image of $\mb$.
More generally, Zariski's Theorem relates the $j^{th}$ homotopy group of the complement of a (quasi-)projective hypersurface with the $j^{th}$ homotopy group of the complement of its pullback under a linear map. It is natural to investigate what happens when linear maps are replaced with arbitrary algebraic maps. Theorem \ref{thm:maingalois} is one step in this direction: we restrict ourselves to the $1^{st}$ homotopy group but consider polynomial maps $\C\to\C^{\uA},\, y\mapsto\vp(x,y)$, of arbitrary degree.

\subsubsection{Sectional monodromy}
Theorem \ref{thm:maingalois} indicates that the sectional monodromy group
of a smooth projective curve need not coincide with the monodromy arising from a given linear subsystem of sections.
Indeed, according to Theorem \ref{thm:maingalois} and \cite[Proposition 1.1]{Kad}, it is possible to find a planar curve $\{\vp(x,y)=0\}$ with sectional monodromy group $\symn$ for which the monodromy group of horizontal sections, namely the group $\gp$, is strictly smaller.
In other words, restricting to a non-complete linear system may
reduce the sectional monodromy group for certain planar curves.
It would be interesting to study whether this phenomenon persists for spatial curves and in higher codimension.

\subsubsection{Relation between the two problems} Both problems deal with the permutation of roots of univariate polynomials whose coefficients depend on parameters, although this is less obvious in Problem~2. There, one may restrict to the subgroup of either $B_A$ or $G_A$ consisting of elements acting trivially on the set of roots of $q_0$. One is therefore considering the problem of simultaneous permutation of the roots of the collection of polynomials $p_0(x,r)$ indexed by the roots of $q_0$. The result can be compared with Corollary~4 in \cite{EL}.

Let us also mention that we can essentially assume that $k=1$ in Problem~1, thanks to Proposition \ref{prop:reductionk=1}. Under this assumption, Problem~1 relates to Problem~2 when the polynomial $q$ has degree 1, with the major difference being that the coefficients of $p(x,y)$ are allowed to vary in Problem~2 while the coefficients of $\vp$ are fixed once and for all in Problem~1. This is reflected in the fact that the Galois group of Problem~1 is contained in that of Problem~2 and that the latter never assumes the form described in item 3 of Theorem \ref{thm:maingalois}.

\subsubsection{A word on the techniques} 
To prove Theorem \ref{thm:main}, we use considerations involving coamoebas, similar to those of \cite{EL}. In the case $k=1$ and $\vert A\vert =3$, we compute explicitly the image under $\mb$ of a set of generators of $\pi_1(\C\setminus \sB,y_0)$ in Section \ref{sec:tri}. This allows us to compute the kernel of the map and to obtain a presentation of the braid monodromy group $B_A$. In Section \ref{sec:discussion}, we discuss how methods from tropical geometry allow one to extend these results to arbitrary supports $A\subset \Z^2$. We also discuss how such results allow us to study isomonodromy loci, namely subsets of the form $\left\lbrace \vp\in \C^A:\, \im(\mb)=B\right\rbrace$ for a given subgroup $B\subset \bn$.

\subsection{Galois group of enumerative problems over algebraic groups}

\subsubsection{The Bernstein-Kouchnirenko-Khovanskii enumerative problem}\label{rem:reducibleGalois}
Theorems \ref{thm:main} and \ref{thm:reduciblebraid} illustrate that the product of the solutions to the systems of equations considered in Problems 1 and 2 play a central role in computing the corresponding Galois groups. 

This invites speculation on the Galois group of the general system supported on tuples $\emph{\wt 
A:=(\wt A_1,\ldots,\wt A_k)}$, $\wt A_i\subset\Z^k$, that are both \textit{irreducible} and 
\textit{non-reduced} (also referred to as  \textit{not triangular} and \textit{lacunary}). For such  a tuple, there is a finite covering $\emph{\psi}:\CC^k\rightarrow\CC^k$ of degree $>1$, such 
that any polynomial $\tilde f_i\in\CC^{\wt A_i}$ is of the form $f_i\circ\psi$ for some other 
polynomial $f_i$ whose support we denote by $\emph{A_i}$. In particular, the solution set of the system
$$\tilde f_1=\cdots=\tilde f_k=0, \quad \tilde f_i \in\CC^{\wt A_i}$$
is the preimage under $\psi$ of the solution set of the system
$$f_1=\cdots=f_k=0, \quad  f_i \in\CC^{A_i}$$
supported on the irreducible and reduced tuple $\emph{A}:=(A_1,\ldots,A_k)$.

According to \cite[Theorem 1.5]{E19}, the Galois group $\emph{G_{A}}$ associated to the tuple $A$ is the full symmetric group. The factorisation $\tilde f_i=f_i\circ\psi$, for $i\in\{1,\ldots,k\}$, implies that the Galois group $G_{\wt A}$  consists entirely of $K$-equivariant permutations, where $\emph{K}:=\ker(\psi)$. In particular, the group $G_{\wt A}$ is isomorphic to a subgroup of the wreath product $K\ \wr \ G_A$. 

In \cite{EL2}, we provide examples of tuples $\wt A:=(\wt A_1,\ldots,\wt A_k)$ for which $G_A$ is a strict subgroup of $K\ \wr\ G_A$. There is therefore another geometric structure, beyond the action of $K$ on the set $\{\tilde f_1=\cdots=\tilde f_k=0\}$, that affects the Galois group $G_{\wt A}$. 

As mentioned above, Theorems \ref{thm:main} and \ref{thm:reduciblebraid} suggest that special attention should be paid to the product of the solutions of the system $f_1=\ldots=f_k=0$. Indeed, as the tuple of polynomials $\emph{f}:=(f_1,\ldots,f_k)$ travels along a loop in the complement of the branching locus in $\CC^A$, the product of the solutions to $f=0$ traces a loop in $\CC^k$, and thereby defines an element of $\pi_1(\CC^k)$. Collecting all such loops yields a subgroup of $\pi_1(\CC^k)$, which a priori depends on the combinatorics of the tuple $A$. As we discuss in the next section, this  subgroup has an impact on the Galois group $G_{\wt A}$ that is not accounted for in the inclusion $G_{\wt A}\subset K\ \wr\ G_A$.

\subsubsection{The general case}
An enumerative problem $\emph{\cP}$ usually refers to the following data: 

-- a pair of smooth algebraic varieties $\emph{\mathscr{T}}$ and $\emph{\sC}$, with $\sC$ connected,

-- an algebraic variety $\emph{\sU}\subset \mathscr{T}\times \sC$ such that the projection $\mathscr{T}\times \sC\to \sC$ restrict to a finite covering $\emph{c}:\sU\to \sC$.\\
Working over $\C$, the monodromy group of the covering $c$ can be considered as a Galois group, that we denote by \emph{$G_\cP$} (see again \cite{Ha79}). This framework includes the coverings considered in Problems 1 and 2, as well as the BKK enumerative problem mentioned in the previous section.

Assume that $\mathscr{T}$ is an algebraic group. A finite, surjective homomorphism $\emph{\psi}: \wt{\mathscr{T}} \rightarrow \mathscr{T}$ between algebraic groups gives rise to a new enumerative problem $\emph{\wt \cP}$, with underlying data
$\widetilde{\mathscr{T}}$, $\wt{\sC}:=\sC$ and $\wt{\sU}:=(\psi,\id)^{-1}(\sU)\subset \wt{\mathscr{T}}\times \sU$. The group $\emph{K}:=\ker(\psi)$ acts on $\wt{\sU}$ and yields an inclusion
\begin{equation}\label{eq:inclusionwreath}
G_{\wt \cP}\hookrightarrow K \ \wr \  G_\cP,
\end{equation}
see \cite[Observation 2.5]{EL2}. The BKK enumerative problem studied in \cite{EL2} illustrates the fact that the above inclusion may be strict. 

Here, we present a geometric structure that explains the possible strictness of this inclusion in the general setting of enumerative problems over algebraic groups.
To do so, we consider the \textit{braid monodromy map} associated with $\cP$, namely the map 
$$\emph{\mu_\cP}:\pi_1(\sC)\rightarrow\pi_1(\UConf_N(\mathscr{T}))$$ 
induced by $f\in\sC\mapsto c^{-1}(f)\subset\mathscr{T}$. We denote by $\emph{B_\cP}:=\im(\mu_\cP)$ the associated \textit{braid monodromy group}. Again, there is a natural projection $\emph{\pi_{\scalebox{0.6}{$\cP$}}}: B_\cP\rightarrow G_\cP$ sending a braid to its underlying permutation.
Since $\mathscr{T}$ is an algebraic group, we have a product map $\UConf_N(\mathscr{T})\rightarrow \mathscr{T}$, $c\mapsto\prod_{z\in c}z$. We denote the induced map at the level of fundamental groups by
$$\emph{\ind_\cP}: \pi_1(\UConf_N(\mathscr{T}))\rightarrow \pi_1(\mathscr{T}).$$

We claim that the Galois group $G_{\wt\cP}$ of the enumerative problem $\wt\cP$ depends on the group $\emph{\cI}:=\ind_\cP(B_\cP)$ associated with the enumerative problem $\cP$. Indeed, on the one hand, the map $\UConf_N(T)\rightarrow \UConf_{\wt N}(\wt{\mathscr{T}})$ obtained by pulling back point configurations by $\psi$ induces an isomorphism $\psi^*: B_\cP \rightarrow B_{\wt\cP}$. On the other hand, we have a short exact sequence
\[0\longrightarrow \pi_1(\wt{\mathscr{T}}) \overset{\psi_*}{\longrightarrow} \pi_1(\mathscr{T}) \overset{m}{\longrightarrow} K\longrightarrow 0\]
where $\emph{m}$ is the monodromy map of the covering $\psi$. Since $\ker(\pi_{\wt\cP}\circ\psi^*)\subset \ker(m\circ \ind_\cP)$, there exists a map $\underline{\ind}_{\wt\cP}:G_{\wt\cP}\rightarrow K$ that fits into the following commutative diagram
\[
\xymatrixcolsep{3pc}\xymatrix{
	 \;B_{\cP}\; \ar[d]_-{\ind_\cP} \ar[r]^-{\pi_{\wt\cP}\circ\psi^*} & \;G_{\wt\cP}\; \ar[d]^-{\underline{\ind}_{\wt\cP}}\\
	\pi_1(T) \ar[r]^-{m}  & \; K \;
} \quad .
\]
The latter diagram yields the inclusion 
\begin{equation}\label{eq:inclusionind}
G_{\wt\cP}\subset \underline{\ind}_{\wt\cP}^{\;\;-1}(m(\cI)).
\end{equation}
Therefore, the following question looks natural. \begin{quote} Is the Galois group $G_{\wt\cP}$ of the enumerative problem $\wt\cP$ completely determined by the inclusions \eqref{eq:inclusionwreath} and \eqref{eq:inclusionind}? \end{quote}
This applies in particular to the enumerative problem associated to the irreducible non-reduced tuples $\wt A$ studied in \cite{EL2}. There, the obstruction described by \eqref{eq:inclusionind} appeared, in disguise, via the Poisson-type formula \cite[Theorem 1.1]{DAS}.

\section{Proofs for Problem 1}

In this section, we address Problem 1 described in Section \ref{sec:resultspermuting} and prove Theorems \ref{thm:maingalois} and \ref{thm:main}. We refer to the aforementioned section for notations.

\subsection{Obstructions}\label{sec:obs} In this section, we show that the braid monodromy group of $\vp$ is subject to the inclusion given in Theorem \ref{thm:main}.

\begin{lemma}\label{lem:constraint}
For any support $A\subset\Z^{k+1}$ not contained in any affine line, the group $B_A$ is a subgroup of $\bnd\cap\rtn$.
\end{lemma}

\begin{proof}
The inclusion $B_A\subset \bnd$ follows from the fact that $\vp(x,y)= \wt\vp(x^d,y)$ for some Laurent polynomial $\wt\vp$, whose support we denote by $\wt A\subset \Z^{k+1}$. Let us add a tilde to every piece of notation coming from $\wt\vp$. Thus, the map $(x,y)\mapsto (x^d,y)$ induces an isomorphism from $\C^{\wt A}$ to $\C^{A}$ that maps the branching locus $\wt\sB \subset \C^{\wt A}$ to  $\sB\subset \C^A$. 
In turn, we have the commutative diagram
\begin{equation}\label{eq:comdiag1}
\xymatrixcolsep{3pc}
\xymatrix{
\pi_1(\CC^k\setminus\wt\sB) \ar[r]^{\sim} \ar[d]^{\mu_{\wt\vp}} & \pi_1(\CC^k\setminus \sB) \ar[d]^{\mb} \\
\;\;B_{\scalebox{0.6}{$\wt\cN$}}^\star\;\; \ar[r]^{f_{d}}  & \;\;\bnd\;\;
}
\end{equation}
where $\emph{f_{d}}:B_{\scalebox{0.6}{$\wt\cN$}}^\star\to \bnd$\index{fd@$f_{d}$} is the isomorphism induced by the map $\UConf_{N/d}(\cC)\to \UConf_{N}(\cC)$ taking a configuration of points to its preimage under $x\mapsto x^d$. The inclusion $B_A\subset \bnd$ follows from the commutativity of the above diagram.

Since $\rtn=\bn$ when $\vt=1$, we can restrict ourselves to the case $\vt>1$. Thus, the coefficients $c_0$ and $c_{N}$ of $\vp$ are monomials and $c_0/c_{N}=cy_1^{a_1} \cdots y_k^{a_k}$ with $\vt:=\gcd(a_1,\ldots,a_k)$. For a given tuple $y:=(y_1,\ldots,y_k)$, the product of the $N$ roots of $\vp(x,y)$ is equal to $cy_1^{a_1} \cdots y_k^{a_k}$ up to sign, by Vieta's formula. Therefore, for any loop $\gamma\in \pi_1(\CC^k\setminus \sB,y_0)$, we have that $\indn(\mb(\gamma))$ is the rotational index of the composition $(y_1^{a_1} \cdots y_k^{a_k}) \circ \gamma$ around $0\in \C$. The latter index is necessarily in $\vt\Z$, which implies the inclusion  $B_A\subset \rtn$.
\end{proof}

\subsection{Reductions}\label{sec:red}
In this section, we argue that we can restrict to simpler cases while proving Theorem \ref{thm:main}. First, we show that we can restrict to a single parameter to determine $B_A$. 

\begin{utver}\label{prop:reductionk=1}
Theorem \ref{thm:main} holds if and only if it holds for $k=1$.
\end{utver}

\begin{proof}
Fix a support $A\subset \Z^{k+1}$ as in Theorem \ref{thm:main} and a generic polynomial $\vp(x,y)\in\C^A$. Assume that Theorem \ref{thm:main} holds for $k=1$. 

In order to prove the statement, it suffices to find a polynomial map $L:\cC\rightarrow\CC^k$, $s\mapsto L(s)$, such that $\vp_L(x,s):=\vp(x,L(s))$ is generic with respect to its support $A_L$, with the same invariants $N$, $d$ and $\vt$ as $\vp$. Moreover, the support $A_L$ should not be contained in a line. Indeed, observe that $\im(\mu_{\vp_L})\subset \im(\mb)$, that $\im(\mu_{\vp_L}) = \rtn \cap \bnd$ by assumption, and that $\im(\mb)\subset \rtn \cap \bnd$ by Lemma \ref{lem:constraint}. Thus, the sought equality $\im(\mb)= \rtn \cap \bnd$ follows.

We take $L$ to be a monomial map, that is, $L(s)=(s^{n_1},\ldots,s^{n_k})$. The dual map $\Z^k\rightarrow \Z$ between character lattices is a linear projection, and the map $\C^A\rightarrow \C^{A_L}$, $\vp\mapsto \vp_L$, between the corresponding spaces of polynomials is surjective. Consequently, the polynomial $\vp_L$ 
can be made generic with respect to its support $A_L$ by choosing the coefficients of $\vp$ accordingly. Moreover, the polynomials $\vp$ and $\vp_L$ share the same integers $d(A)=d(A_L)$ and $N(A)=N(A_L)$, since these integers only depend on the projection of the support $A$ (respectively $A_L$) onto the first coordinate axis of $\Z^{k+1}$ (respectively $\Z^{1+1}$). 

Let us show that we can choose the exponents $n_j$ involved in $L$ so that $\vt(A)=\vt(A_L)$. If $A$ is sharp, that is, $c_0/c_{N}=cy_1^{a_1}\cdots y_k^{a_k}$ for some $c\in\cC$ and some tuple $(a_1,\ldots,a_k)\in\Z^k$, then $(c_0/c_{N})\circ L =cy^{a_1n_1+\cdots 
+a_kn_k}$. If we take $n_1,\ldots,n_k$ such that $a_1n_1+\cdots +a_kn_k=\vt$, which is possible since $\vt=\gcd(a_1,\ldots,a_k)$, then we have $\vt(A)=\vt(A_L)$. If $A$ is not sharp, we can always take $(n_1,\ldots,n_k)$ such that the map $A\rightarrow A_L$ induced by $L$ is injective (take $(n_1,\ldots,n_k)$ not orthogonal to the difference between any pair of points in $A$). In particular, $(c_0/c_{N})\circ L$ is not a monomial and $\vt(A)=\vt(A_L)=1$. 

Finally, it remains to show that $L$ can be chosen so that $A_L$ is not contained in a line. If $A$ is not sharp, at least one of $c_0\circ L$ or $c_{N}\circ L$ has at least two monomials (recall that we took $L$ to be injective in that case). Thus, the support $A_L$ is not contained in a line. Assume that $A$ is sharp. Then the set of vectors $(n_1,\ldots,n_k)\in\Z^k$ such that $A_L$ is contained in a line is itself contained in an affine hyperplane $V\subset \R^k$ not parallel to $H:=\{(n_1,\ldots,n_k)\in\Z^k: n_1a_1+\cdots+n_ka_k=\vt\}$. This is because $A$ is not contained in a line. In particular, the set $H\setminus V$ is nonempty, and any vector $(n_1,\ldots,n_k)\in H\setminus V$ will do. The result follows.
\end{proof}

We conclude this section with another reduction.

\begin{utver}\label{prop:redd=1}
Theorem \ref{thm:main} holds if and only if it holds for supports $A$ with $d(A)=1$.
\end{utver}

\begin{proof}
Fix a support $A\subset \Z^{k+1}$ as in Theorem \ref{thm:main} and a generic polynomial $\vp(x,y)\in\C^A$. As in the proof of Lemma \ref{lem:constraint}, we can write 
\[
\vp(x,y)= \wt\vp(x^d,y)
\]
for some Laurent polynomial $\wt\vp$ supported on $\wt A\subset \Z^{k+1}$. Again, we add a tilde to every piece of notation coming from $\wt\vp$. In view of the commutative diagram \eqref{eq:comdiag1}, it suffices to prove that 
$f_{d}$ maps $R_{\scalebox{0.6}{$\wt\cN$}}^{\tilde \vt}$ into $\bn\cap \rtn$. To see this, observe first that $\tilde\vt=\vt$. Then consider the commutative diagram
\[
\xymatrixcolsep{3pc}
\xymatrix{
\UConf_{N/d}(\cC) \ar[r] \ar[d]_{c\,\mapsto\prod_{x\in c}x} & \UConf_{N}(\cC)  \ar[d]^{c\,\mapsto\prod_{x\in c}x} \\
\;\;\cC\;\; \ar[r]  & \;\;\cC\;\;
}
\]
where $\UConf_{N/d}(\cC)\rightarrow \UConf_{N}(\cC)$ is the pullback by $x\mapsto x^d$, and the bottom horizontal map is multiplication by $\big(\prod_{0\leqslant j<d}e^{2\pi i j/d}\big)^{N/d}=(-1)^{N(d-1)/d}$. At the level of fundamental groups, we obtain the following commutative diagram
\begin{equation}\label{eq:comdiag2}
\xymatrixcolsep{3pc}
\xymatrix{
\;\;B_{\wt \cN}^\star\;\; \ar[r]^{f_{d}} \ar[d]_{\ind_{\wt\cN}} & \;\;\bn \ar[d]^{\indn}\;\; \\
\pi_1(\cC,\tilde x_0) \ar[r]^\sim  & \pi_1(\cC,x_0)
}\quad .
\end{equation}
The result follows.
\end{proof}

\subsection{Trinomials}\label{sec:tri}
According to the previous section, it suffices to prove Theorem \ref{thm:main} for polynomials $\vp$ whose coefficients depend on a single parameter $y\in\cC$ and whose support $A$ satisfies $d:=d(A)=1$. 
In this section, we restrict our attention even further, namely to trinomials $\vp(x,y)=\alpha y^{a_1}x^{n_1}+\beta y^{a_2}x^{n_2}+\gamma y^{a_3}x^{n_3}$ with $\alpha, \beta, \gamma\in \cC$ and integers $n_1\leqslant n_2\leqslant n_3$. The aim of this section is to prove the following.

\begin{theor}\label{thm:trinomials}
Let $\{0\}\subset A\subset \Z^{2}$ be any support set not contained in a line, with associated integers $d:=d(A)$ and $\vt:=\vt(A)$, and satisfying $\vert A\vert=3$. For any generic Laurent polynomial $\vp\in \C^A$, the image of the braid monodromy map $\mb$ is the subgroup $\bnd\cap \rtn$ of $\bn$.
\end{theor}

For the sake of simplicity, we will assume that $\alpha=\beta=\gamma=1$. There is no loss of generality in doing so, since one of the coefficients $\alpha$, $\beta$, or $\gamma$ can be brought to $1$ by projective equivalence, while the remaining two can be compensated by a harmless change of coordinates $(x,y)\mapsto(ux,vy)$, where $u,v\in\cC$.

\subsubsection{Braids and their diagrams}\label{sec:braiddiagrams}
Braids in $\bn$ can be represented by braid diagrams similarly to the elements of the \textit{Artin braid group} $B_\cN:=\pi_1(\UConf_N(\C),\cN)$. Braids live in the 3-dimensional space $\C\times \text{timeline}$, and their diagrams are obtained by a real linear projection onto $\R\times \text{timeline}$ (see, for instance, \cite[Chapter 9.1.1]{farbmarg}). Here, we associate to any braid in $\bn$ a braid diagram using the argument map $\emph{\arg}:\cC\rightarrow S^1$\index{arg@$\arg$} as our projection, as done in \cite[Section 2.1]{EL}. 

Precisely, a braid $\beta\in \bn$ is an isotopy class of continuous maps $\beta:[0,1]\rightarrow \Conf_N(\cC)$
into the configuration space $\Conf_N(\cC)$ such that $\beta(0)=\beta(1)=\cN$. The argument map $\arg:\cC\rightarrow S^1$ defines a map from the open dense subset $U\subset \Conf_N(\cC)$ of configurations with pairwise distinct arguments to $\Conf_N(S^1)$. If $C\subset \Conf_N(\cC)$ is the complement of $U$, we can ensure, using an isotopy if necessary, that exactly two points of $\beta(s)$ have the same argument whenever $\beta(s)\in C$. At such a meeting point of the projection under $\arg$ of two strands of $\beta$, we can make sense of which of them has smaller modulus in $\cC$ than the other. We choose to represent the strand with smaller modulus as crossing over the other strand. 

This way, we can represent the element $\beta$ by the corresponding diagram in $S^1\times[0,1]$, picturing simultaneously the trajectories of the $N$ points $\beta(s)$, and keeping track of which strand is passing over which one at a crossing. In order to be able to draw braid diagrams in $S^1\times[0,1]$, we choose a fundamental domain $[\theta,\theta+2\pi[$ of $S^1$ and draw the diagrams in $[\theta,\theta+2\pi[\times [0,1]$ instead. Thus, strings in a diagram are allowed to hit boundary points $(\theta,s)$, disappear, and reappear at $(2\pi+\theta,s)$. 

In Figure \ref{fig:braids}, we represent the braid diagrams of some useful elements of $\bn$. From the braids $\emph{b_1}$\index{bj@$b_j$} and $\emph{\tau}$\index{tau@$\tau$}, we define $\emph{b_j}:=\tau^{j-1} b_1\tau^{-j+1}$, $j\in\{1,\ldots,N\}$. By a slight abuse of language, we will identify braids with their diagrams and therefore view braid diagrams as elements of the relevant braid group.

\begin{figure}
\centering
\def\svgwidth{\linewidth} 
\begingroup%
  \makeatletter%
  \providecommand\color[2][]{%
    \errmessage{(Inkscape) Color is used for the text in Inkscape, but the package 'color.sty' is not loaded}%
    \renewcommand\color[2][]{}%
  }%
  \providecommand\transparent[1]{%
    \errmessage{(Inkscape) Transparency is used (non-zero) for the text in Inkscape, but the package 'transparent.sty' is not loaded}%
    \renewcommand\transparent[1]{}%
  }%
  \providecommand\rotatebox[2]{#2}%
  \newcommand*\fsize{\dimexpr\f@size pt\relax}%
  \newcommand*\lineheight[1]{\fontsize{\fsize}{#1\fsize}\selectfont}%
  \ifx\svgwidth\undefined%
    \setlength{\unitlength}{453.54330709bp}%
    \ifx\svgscale\undefined%
      \relax%
    \else%
      \setlength{\unitlength}{\unitlength * \real{\svgscale}}%
    \fi%
  \else%
    \setlength{\unitlength}{\svgwidth}%
  \fi%
  \global\let\svgwidth\undefined%
  \global\let\svgscale\undefined%
  \makeatother%
  \begin{picture}(1,0.1875)%
    \lineheight{1}%
    \setlength\tabcolsep{0pt}%
    \put(0,0){\includegraphics[width=\unitlength,page=1]{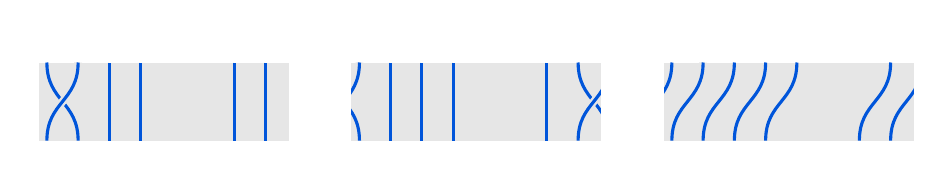}}%
    \put(0.1855429,0.0724854){\color[rgb]{0,0,0}\makebox(0,0)[lt]{\lineheight{1.25}\smash{\begin{tabular}[t]{l}$\cdots$\end{tabular}}}}%
    \put(0.51627209,0.07248539){\color[rgb]{0,0,0}\makebox(0,0)[lt]{\lineheight{1.25}\smash{\begin{tabular}[t]{l}$\cdots$\end{tabular}}}}%
    \put(0.86353789,0.07248539){\color[rgb]{0,0,0}\makebox(0,0)[lt]{\lineheight{1.25}\smash{\begin{tabular}[t]{l}$\cdots$\end{tabular}}}}%
    \put(0.15108654,0.13804671){\color[rgb]{0,0.33333333,0.85490196}\makebox(0,0)[lt]{\lineheight{1.25}\smash{\begin{tabular}[t]{l}$b_1$\end{tabular}}}}%
    \put(0.48181571,0.13804671){\color[rgb]{0,0.33333333,0.85490196}\makebox(0,0)[lt]{\lineheight{1.25}\smash{\begin{tabular}[t]{l}$b_N$\end{tabular}}}}%
    \put(0.81254498,0.13804671){\color[rgb]{0,0.33333333,0.85490196}\makebox(0,0)[lt]{\lineheight{1.25}\smash{\begin{tabular}[t]{l}$\tau$\end{tabular}}}}%
  \end{picture}%
\endgroup%

\caption{The braid diagrams of $b_1$, $b_N$ and $\tau$ in $\bn$.}
\label{fig:braids}
\end{figure}

\begin{lemma}\label{lem:kernel}
The elements $b_1,\ldots,b_N \in \bn$ generate $\ker(\indn)$.
\end{lemma}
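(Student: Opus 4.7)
My approach is to decompose $B_\cN^*$ as a semidirect product $\Ker(\ind_\cN)\rtimes\langle\tau\rangle$ and then to show that the kernel is generated by the $b_j$'s by pushing all occurrences of $\tau$ out of a word representing an element of the kernel. First I would compute $\ind_\cN$ on the relevant generators from the braid diagrams of Figure~\ref{fig:braids}. In the diagram of $b_1$ only two adjacent strands undergo a local crossing confined to a small arc of $S^1\times[0,1]$ bounded away from the puncture, so no strand winds around $0$ and the product of strand positions traces a loop in a contractible region of $\C^*$; this gives $\ind_\cN(b_1)=0$, and conjugation invariance yields $\ind_\cN(b_j)=0$ for all $j$. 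In the diagram of $\tau$, the $N$ strands rotate homogeneously by $2\pi/N$, so the product of their positions traces a loop of winding number one around $0$, giving $\ind_\cN(\tau)=1$.

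Since $\Z$ is free, the short exact sequence
\[
0\longrightarrow\Ker(\ind_\cN)\longrightarrow B_\cN^*\stackrel{\ind_\cN}{\longrightarrow}\Z\longrightarrow 0
\]
is split by $1\mapsto\tau$, so $B_\cN^*=\Ker(\ind_\cN)\rtimes\langle\tau\rangle$ and every element admits a unique expression $h\cdot\tau^k$ with $h\in\Ker(\ind_\cN)$ and $k=\ind_\cN(h\tau^k)$. I would next invoke the classical presentation of the surface braid group $\pi_1(\UConf_N(\C^*))$ of the once-punctured plane to assert that $B_\cN^*=\langle b_1,\ldots,b_N,\tau\rangle$: the elements $b_1,\ldots,b_{N-1}$ realise the Artin half-twists between cyclically adjacent strands of the regular $N$-gon inscribed around the puncture, and any one additional generator (such as $\tau$) is sufficient to obtain the full annular braid group.

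Granting the generation statement, any $h\in\Ker(\ind_\cN)$ is a word in $b_j^{\pm1}$ and $\tau^{\pm1}$; the identities $\tau b_j\tau^{-1}=b_{j+1}$ for $1\le j<N$ (immediate from $b_j=\tau^{j-1}b_1\tau^{-(j-1)}$) allow one to shuffle every $\tau$-letter past every $b_j$-letter with $j<N$. Applying $\ind_\cN$ to both sides then kills the residual $\tau^k$ factor since $\ind_\cN(h)=0$, leaving $h$ as a word in the $b_j$'s.

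The genuine obstacle, and the technical core of the argument, is to verify that the subgroup $\langle b_1,\ldots,b_N\rangle$ is stable under conjugation by $\tau$; equivalently, that $\tau b_N\tau^{-1}=\tau^N b_1\tau^{-N}$ lies in $\langle b_1,\ldots,b_N\rangle$, despite $\tau^N$ having infinite order in $B_\cN^*$. I expect this to follow from a direct diagrammatic identity of the form $\tau^N b_1\tau^{-N}=wb_1 w^{-1}$ with $w=b_Nb_{N-1}\cdots b_1$ (or an analogous cyclic word), which one reads off by isotoping the diagram of $b_1$ while rotating the base configuration $\cN$ once around the puncture. This is exactly the annular analogue of the standard "full-twist" relation in the finite Artin braid group and closes the induction on the $\tau$-exponent needed in the rewriting step above.
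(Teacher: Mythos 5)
Your argument is the same as the paper's: check $\ind_\cN(b_j)=0$ and $\ind_\cN(\tau)=1$, invoke the known generation of $B_\cN^*$ by $b_1,\dots,b_N,\tau$ (the paper cites \cite[Lemma 2.2]{EL} for this), rewrite any element as $b\tau^k$ with $b$ a word in the $b_j$'s by pushing $\tau$'s to the right via $\tau b_j=b_{j+1}\tau$, and observe that $\ind_\cN(b\tau^k)=k$ forces $k=0$ on the kernel. The one step you leave open --- that $\tau b_N\tau^{-1}=\tau^N b_1\tau^{-N}$ lies in $\langle b_1,\dots,b_N\rangle$ --- is indeed the point the paper passes over silently, but its resolution is much simpler than the ``full-twist relation'' you anticipate: $\tau^N$ is the braid traced by the ambient isotopy $x\mapsto e^{2i\pi s}x$, $s\in[0,1]$, of $\C^*$, which starts and ends at the identity, so $\tau^N$ is central in $B_\cN^*=\pi_1(\UConf_N(\C^*),\cN)$. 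Hence $\tau^N b_1\tau^{-N}=b_1$, the indices of the $b_j$ are simply cyclic modulo $N$, and no induction on the $\tau$-exponent is needed. Your proposed identity $\tau^N b_1\tau^{-N}=wb_1w^{-1}$ with $w=b_N\cdots b_1$ is suspect as an equality (it would require $w$ to commute with $b_1$, given centrality of $\tau^N$); it happens to be harmless for your purposes since any conjugate of $b_1$ by a word in the $b_j$'s stays in the subgroup, but as written it is an unproven and probably false assertion standing in for a one-line observation. With that replacement your proof is complete and coincides with the paper's.
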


\begin{proof}
Clearly, the elements $b_j$, $j\in\{1,\ldots, N\}$, belong to $\ker(\indn)$. As shown in \cite[Lemma 2.2]{EL}, the elements $b_j$ generate $\bn$ together with $\tau$. Moreover, any element of $\bn$ can be written as $b\tau^k$ for some integer $k$, where $b$ is a word in the $b_j$'s. Indeed, the commutator $\tau b_j \tau^{-1}b_j^{-1}$ equals the product $b_{j+1}b_j^{-1}$, or equivalently $\tau b_j = b_{j+1}\tau$. 

Finally, since $\indn(b\tau^k)=k$, the element $b\tau^k$ lies in $\ker(\indn)$ if and only if $k=0$. Thus, we have the inclusion $\ker(\indn)\subset \left\langle b_1,\ldots,b_N\right\rangle$, and the result follows.
\end{proof}

\subsubsection{Further reductions}

In this section, we state some elementary lemmas that allow us to restrict our attention to trinomials $\vp(x,y)$ with specific support sets.

\begin{lemma}\label{lem:red2}
Let $\{0\}\subset A\subset \Z^{2}$ be any support set not contained in a line, and let $\vp\in \C^A$ be any generic Laurent polynomial. Then, the conclusion of Theorem \ref{thm:main} holds for $\vp$ if and only if it holds for $\wt\vp(x,y):=\vp(x^\varepsilon,y^\nu)$, where $\vert\varepsilon\vert=\vert\nu\vert=1$.
\end{lemma}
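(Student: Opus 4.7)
The plan is to exploit the fact that the substitution $(x,t) \mapsto (x^\varepsilon, t^\nu)$ with $|\varepsilon|=|\nu|=1$ is an involutive automorphism of the torus $(\C^*)^2$. By this involutivity, it suffices to prove one direction of the ``if and only if'': I will show that the conclusion of Theorem \ref{thm:main} for $\vp$ implies the same for $\wt\vp$.

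First, I would verify that the dual map $(a_1,a_2)\mapsto (\varepsilon a_1, \nu a_2)$ on $\Z^2$ sends $A$ to the support $\wt A$ of $\wt\vp$, and preserves all three invariants $N$, $d$, $\vt$. The first two depend only on the projection of the support onto the first coordinate axis, which is at worst reflected by $\varepsilon=-1$, hence $N(\wt A)=N(A)$ and $d(\wt A)=d(A)$. For $\vt$, sharpness is preserved since $\vert \wt A \cap (\{\varepsilon q\}\times\Z)\vert = \vert A\cap (\{q\}\times\Z)\vert$; in the sharp case, the ratio of extremal coefficients $c_q/c_{q+N}=c\, t^a$ transforms into $c\, t^{\nu a}$ (possibly inverted if $\varepsilon=-1$), whose border index is still $\vert a\vert$.

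Next, I would show that the involution induces compatible isomorphisms on all relevant spaces. The map $t \mapsto t^\nu$ is a biholomorphism of $\C^*$ sending $\sB$ to $\wt\sB$, hence inducing an isomorphism $\pi_1(\C^*\setminus\sB)\simeq \pi_1(\C^*\setminus \wt\sB)$. Similarly, $x \mapsto x^\varepsilon$ acts pointwise on configurations, giving a homeomorphism $j:\UConf_N(\C^*)\to \UConf_N(\C^*)$ carrying $\cN$ to $\wt\cN$, and thus an isomorphism $j_*:B_\cN^*\to B_{\wt\cN}^*$. These assemble into a commutative square intertwining $\mb_\vp$ with $\mb_{\wt\vp}$, so that $\im(\mb_\vp)$ and $\im(\mb_{\wt\vp})$ correspond under $j_*$.

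Finally, I would check that $j_*$ sends the special subgroups to their tilded counterparts: $j_*(B_{\cN,d}^*)=B_{\wt\cN,d}^*$ and $j_*(R_\cN^\vt)=R_{\wt\cN}^\vt$. The former holds because $x\mapsto x^\varepsilon$ commutes with multiplication by $e^{2i\pi/d}$, so $j$ intertwines the pullback maps defining \eqref{eq:fd}. The latter follows from the identity $\prod_{x\in j(c)}x = \bigl(\prod_{x\in c}x\bigr)^\varepsilon$, which implies $\ind_{\wt\cN}\circ j_* = \varepsilon\cdot \ind_\cN$ at the level of fundamental groups; since $\varepsilon=\pm 1$, we have $\varepsilon\vt\Z=\vt\Z$. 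No step here presents a genuine obstacle; the whole argument is a careful bookkeeping of how every ingredient in Theorem \ref{thm:main} transforms under the involution, the only point of vigilance being the sign flip when $\varepsilon=-1$ or $\nu=-1$.
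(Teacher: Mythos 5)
Your proposal is correct and follows essentially the same route as the paper, whose proof merely observes that the change of variables induces an isomorphism between the two fibrations over the complements of the bifurcation sets and leaves to the reader exactly the bookkeeping you carry out (invariance of $N$, $d$, $\vt$, and the compatibility of $j_*$ with $B_{\cN,d}^*$ and $R_\cN^\vt$). The one point you gloss over --- as does the paper --- is that for $\nu=-1$ the map $t\mapsto t^\nu$ only identifies $\C^*\setminus\sB$ with $\C^*\setminus\wt\sB$ rather than the complements in $\C$; this is harmless because filling in the puncture at $0$ does not change the image of the braid monodromy map.
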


\begin{proof}
The statement follows from the fact that the change of variables $(x,y)\mapsto(x^\varepsilon,y^\nu)$ induces an isomorphism between the two coverings
\[
\{(x,y)\in \cC\times(\C\setminus \sB) : \vp(x,y)=0\} \to \C\setminus \sB
\]
and
\[
\{(x,y)\in \cC\times(\C\setminus \wt\sB) : \wt\vp(x,y)=0\} \to \C\setminus \wt\sB.
\]
We obtain the corresponding commutative diagrams, analogous to \eqref{eq:comdiag1} and \eqref{eq:comdiag2}. The details are left to the reader.
\end{proof}

\begin{sledst}\label{cor:reduction}
Theorem \ref{thm:trinomials} holds if and only if it holds for at least one trinomial in each orbit of the following group action:
\[
(i,j,d,\nu)\in \Z^3\times \{-1,1\}, \quad (i,j,d,\nu) \cdot \vp(x,y):=x^i y^j \vp(x^{d},y^\nu).
\]
In particular, it suffices to prove Theorem \ref{thm:trinomials} for trinomials $\vp$ whose support set $A$ is of one of the following types:
\begin{itemize}
    \item[(1)] $A= \left\lbrace (0,0), (m,a), (n,b)\right\rbrace$ with $0<m<n$, $\gcd(m,n)=1$, and $na - mb > 0$,
    \item[(2)] $A= \left\lbrace (0,a), (0,b), (1,0)\right\rbrace$ with $a < b$.
\end{itemize}
\end{sledst}

\begin{proof}
The first part of the statement follows from Proposition \ref{prop:redd=1}, Lemma \ref{lem:red2}, and the fact that the covering
\[
\{(x,y)\in \cC\times(\C\setminus \sB) : \vp(x,y)=0\} \to \C\setminus \sB
\]
associated to $\vp$ is the same as the covering associated to $x^i y^j \cdot \vp$.

For the second part, observe that the involutions $x\mapsto x^{-1}$ and $y\mapsto y^{-1}$ induce reflections along the horizontal and vertical axes, respectively, in the monomial lattice $\Z^2$. The multiplication by $x^i y^j$ corresponds to translation by the vector $(i,j)$ in $\Z^2$.

If every vertical line intersects $A$ in at most one point, then we can translate $A$ so that $\{(0,0)\} \subset A \subset \N\times \Z$. Using the involution $y\mapsto y^{-1}$, we may assume $na - mb > 0$. Finally, acting by $d$, we can achieve $\gcd(m,n)=1$, and thus reduce to case (1).

If there exists a vertical line intersecting $A$ in two points, we can translate $A$ so that this line becomes the vertical axis, and the third point of $A$ lies on the horizontal axis. Then, using the involutions $x\mapsto x^{-1}$ and $y\mapsto y^{-1}$, we may assume $A = \left\lbrace (0,a), (0,b), (d,0) \right\rbrace$ with $a < b$ and $d > 0$. Rescaling the first coordinate by $d$ gives case (2).
\end{proof}

\subsubsection{Trinomials of type (2)} \label{sec:type2}
As we have seen in Corollary \ref{cor:reduction}, we can restrict ourselves to two types of trinomials when proving Theorem \ref{thm:trinomials}, namely trinomials of type (1) or (2). In this subsection, we prove the theorem for trinomials of type (2). Let $\vp(x,y)=(y^a+y^b)+x$ with integers $a < b$. In this case, $N=1$ and $B_1^\star = \pi_1(\cC)$. Additionally, we have $d = \vt = 1$.

The branching locus $\sB \subset \C$ is defined by the equation $y^a + y^b = 0 \Leftrightarrow y^a(y^{b-a} + 1) = 0$, which has $b-a$ simple roots in $\cC$. When $y$ travels along a small circle around one of these roots, the root $x = -(y^a + y^b)$ of $\vp(x)$ traces a simple loop around $0$. The corresponding element in $\im(\mb)$ is thus a generator of $B_1^\star$. Since $d = \vt = 1$ and $R_1^1 = B_1^\star$, this proves Theorem \ref{thm:trinomials} for trinomials of type (2). \hfill $\qed$

\subsubsection{The branching locus}\label{sec:bifurcation}
From now until the end of Section \ref{sec:prooftrinomial}, we restrict to trinomials of type (1), according to the dichotomy given in Corollary \ref{cor:reduction}. This means that $\vp(x,y) = 1 + y^a x^m + y^b x^n$ with $0 < m < n$, $\gcd(m,n) = 1$, and $na - mb > 0$. 

Let us determine the branching locus $\sB \subset \C$ of $\vp$. To that end, consider for a moment the polynomial $f(x) = 1 + u x^m + v x^n$. Aside from the trivial case $v = 0$, the polynomial $f(x)$ has strictly fewer than $n$ roots if the triple $(u, v, x)$ satisfies
\begin{equation}\label{eq:discriminant}
\left\lbrace
    \begin{array}{rl}
     1 + ux^m + vx^n &= 0 \\
     mux^{m} + nvx^n &= 0 
    \end{array}
\right.
\Leftrightarrow
\left\lbrace
    \begin{array}{rl}
     v &= \frac{m}{n-m} x^{-n} \\
     u &= \frac{n}{m-n} x^{-m} 
    \end{array}
\right. .
\end{equation}
Therefore, the polynomial $f$ is singular if and only if
\[
v \left( \left(v \cdot \frac{n - m}{m} \right)^m - \left( u \cdot \frac{m - n}{n} \right)^n \right) = 0, 
\]
which holds because $\gcd(m,n) = 1$.

It follows that the branching locus is defined by the equation
\[
y^b \left( \left(y^b \cdot \frac{n - m}{m} \right)^m - \left( y^a \cdot \frac{m - n}{n} \right)^n \right) = 0.
\]
Therefore, $\sB$ is the disjoint union of $0 \in \C$ with the $\emph{\delta} := na - mb$ \index{delta@$\delta$} non-zero roots of the binomial equation
\begin{equation*}
    \left( y^b \cdot \frac{n - m}{m} \right)^m = \left( y^a \cdot \frac{m - n}{n} \right)^n.
\end{equation*}
In particular, the points of $\sB \setminus \{0\}$ are equidistributed on the circle $\{ y \in \C : \, \vert y \vert = \rho \}$ for some $\emph{\rho} := \rho(a, b, m, n) > 0$.\index{rho@$\rho$} 

\subsubsection{The coamoeba of $\vp$}\label{sec:coamoeba}
Denote by $\emph{\Arg_\vp}$\index{Argphi@$\Arg_\vp$} the (closed) coamoeba of $\vp$, that is, the closure in $(S^1)^2$ of the subset 
\begin{equation*}
    \left\lbrace (\theta,\nu) \in (S^1)^2 :\, \theta = \arg(x), \; \nu = \arg(y), \; \vp(x,y) = 0 \right\rbrace.
\end{equation*}
The set $\Arg_\vp$ can easily be described in terms of $\Arg_\ell$, where $\emph{\ell}(x,y) := 1 + x + y$. 
Indeed, the polynomial $\vp$ is projectively equivalent to the composition of the polynomial $\ell$ with the map $\psi\colon (x,y) \mapsto (y^a x^m, y^b x^n)$.
Therefore, $\Arg_\vp$ is the pullback of $\Arg_\ell$ under the covering map $(\theta,\nu) \mapsto (m\theta + a\nu, n\theta + b\nu)$ from $(S^1)^2$ to itself.

Straightforward computations show that the set $\Arg_\ell$ is the union of the two closed triangles in $(S^1)^2\simeq (\R/2\pi\Z)^2$ bounded by the three geodesics
\begin{equation}\label{eq:geod}
    \{\theta = \pi\}, \quad \{\nu = \pi\}, \quad \text{and} \quad \{\theta - \nu = \pi\}.
\end{equation}
These two triangles are copies of the Newton polygon of $\ell$, rotated by $\pi/2$ and $-\pi/2$ respectively.
As a consequence, the closed coamoeba $\Arg_\vp$ consists of $2\delta$ triangles. Each such triangle is bounded by three geodesics. These are precisely the pullbacks, under the covering map $\psi$, of the three geodesics in \eqref{eq:geod}. 

We refer to the boundary geodesics of $\Arg_\vp$ with respective slopes $(b,-n)$, $(-a,m)$ and $(a-b,n-m)$ as the \emph{D-}, \emph{L-}, and \emph{R-geodesics}; see Figures \ref{fig:coamoebaline} and \ref{fig:coamoebaphi}. 
These slopes are the outer normal vectors to the edges of the convex hull $\conv(A)$. The letters D, L, and R stand for down, left, and right, respectively, in reference to the positions of the corresponding edges of $\conv(A)$. 

\begin{figure}
\centering
\def\svgwidth{\linewidth} 
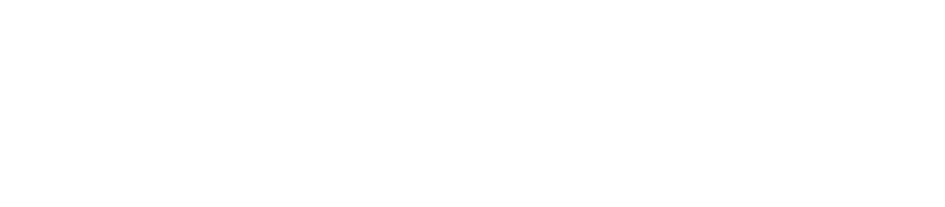
\caption{The coamoeba $\Arg_\ell$ of the line $\ell$ in the fundamental domain $[-\pi,\pi)$ of $(S^1)^2\simeq (\R/2\pi\Z)^2$.}
\label{fig:coamoebaline}
\end{figure}

The braid monodromy map $\mb$ tracks the configuration of points given by horizontal sections of the curve $\{(x,y) \in (\cC)^2 :\, \vp(x,y) = 0\}$.
Below, we argue that the horizontal sections of $\Arg_\vp$ can be used to determine the image of $\mb$.

While generic horizontal sections of $\{(x,y)\in (\cC)^2:\, \vp(x,y)=0\}$ consist of $n$ distinct points, generic horizontal sections of $\Arg_\vp$ consist of $n$ distinct line segments. 
Indeed, observe that each such segment has a single endpoint lying on a D-geodesic, and that the D-geodesics intersect any horizontal section exactly $n$ times in total.
This remains true unless the horizontal section passes through an intersection point of an L-geodesic with an R-geodesic, in which case the corresponding segment is bounded by two (possibly distinct) points on D-geodesics. 

For practical purposes, we refer to the intersection points between L- and R-geodesics as the \textit{singular vertices} of $\Arg_\vp$.
We denote by $\emph{\sS} \subset (S^1)^2$ \index{Ss@$\sS$} the set of all such points. \bigskip

\begin{figure}
\centering
\def\svgwidth{\linewidth} 
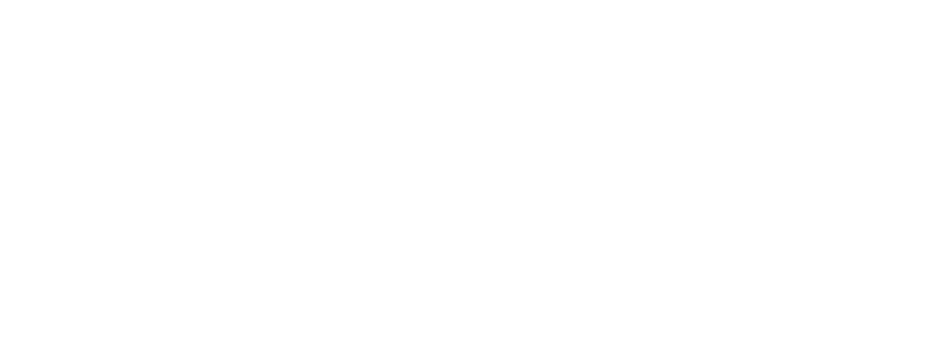
\caption{The coamoeba of $\vp$ with $A = \{(0,0), (2,4), (5,2)\}$.}
\label{fig:coamoebaphi}
\end{figure}

\begin{lemma}\label{lem:singvert}
The $\delta$ singular vertices of $\Arg_\vp$ have pairwise distinct $\nu$-coordinates. 
\end{lemma}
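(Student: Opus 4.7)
My plan is to identify the singular vertex set $\sS$ as a single fiber of the covering $\psi:(S^1)^2\to(S^1)^2$, $\psi(\theta,\nu)=(m\theta+a\nu,n\theta+b\nu)$, and then use that translation by $\Ker\psi$ automatically separates $\nu$-coordinates. By the construction of Section \ref{sec:coamoeba}, one has $\Arg_\vp=\psi^{-1}(\Arg_\ell)$, and the L- and R-geodesics of $\Arg_\vp$ are the full $\psi$-preimages of the two geodesics of $\Arg_\ell$ that correspond, under the lattice map $\psi^\ast$ on exponents, to the L- and R-edges of $\conv(A)$. Since $\conv(\ell)$ is the standard simplex, these two distinguished edges share a common vertex, so their face polynomials vanish simultaneously at a single point of $\ttor$; equivalently, the two defining linear equations on $(S^1)^2$ have a $\pm 1$-determinant system and so meet transversely at a unique point. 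Hence $\sS$ equals $\psi^{-1}(\mathrm{pt})$, a single coset of $\Ker\psi$ of cardinality $|\det \psi|=\delta$.

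The main step is then to verify that the second-factor projection $(\theta,\nu)\mapsto\nu$ restricts to an injection on $\Ker\psi$. Any $(\theta,0)\in\Ker\psi$ satisfies $m\theta\equiv n\theta\equiv 0\pmod{2\pi}$, hence $\GCD(m,n)\cdot\theta\equiv 0$, and the normalization $\GCD(m,n)=1$ from Corollary \ref{cor:reduction}(1) forces $\theta\equiv 0$. The image of $\Ker\psi$ under the projection therefore lies inside the kernel of $\delta\cdot:S^1\to S^1$, a cyclic group of order $\delta$; since both sets have cardinality $\delta$, the projection is in fact a bijection from $\Ker\psi$ onto the $\delta$-th roots of unity in $S^1$. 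Translating by any representative of $\sS$ preserves distinctness, which is exactly the statement of the lemma.

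I do not anticipate a serious obstacle. The only subtlety is bookkeeping: one must identify precisely which of the three geodesics of $\Arg_\ell$ pulls back to the L- and R-geodesics of $\Arg_\vp$ under $\psi$, because $\psi^\ast$ does not preserve the natural D/L/R labels on edges of Newton polygons. Once this correspondence is pinned down, the argument reduces entirely to linear algebra in $\Z^2$ together with the reduction $\GCD(m,n)=1$, and the fact that $\psi$ is a group homomorphism makes every fiber a translate of $\Ker\psi$ with the same $\nu$-separation property.
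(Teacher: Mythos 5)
Your proof is correct and follows essentially the same route as the paper's: both identify $\sS$ as a single fiber (a coset of $\Ker\psi$) of the covering $(\theta,\nu)\mapsto(m\theta+a\nu,n\theta+b\nu)$ and then show that the $\nu$-projection separates the $\delta$ points of that coset using $\GCD(m,n)=1$. The only difference is cosmetic: the paper establishes the separation by explicitly solving a linear Diophantine system in the dual lattice, whereas you deduce it more directly from the triviality of $\Ker\psi\cap\{\nu=0\}$ via B\'ezout.
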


\begin{proof}
The set $\sS \subset \Arg_\vp$ is the preimage under the covering $(\theta,\nu) \mapsto (m\theta + a\nu, n\theta + b\nu)$ from $(S^1)^2$ to itself of the vertex $(0,\pi)$ of $\Arg_\ell$. 
This is because this covering maps L- and R-geodesics respectively to $\{\nu = \pi\}$ and $\{\theta - \nu = \pi\}$, which intersect at $(0,\pi)$.
Therefore, the set $\sS$ consists of $\delta$ points and, up to translation, coincides with the projection onto $(\R/2\pi\Z)^2$ of the lattice
\[
\frac{1}{\delta}
\begin{pmatrix}
    b & -a \\
    -n & m
\end{pmatrix}
\cdot (2\pi\Z)^2.
\]

Two points in this lattice have the same $\nu$-coordinate in $(\R/2\pi\Z)^2$ if their difference, which is of the form 
\[
\frac{2\pi}{\delta} \left( \lambda \begin{pmatrix} b \\ -n \end{pmatrix} + \mu \begin{pmatrix} -a \\ m \end{pmatrix} \right),
\]
has a second coordinate in $2\pi\Z$. Thus, the pair $(\lambda,\mu)$ has to satisfy $\mu m - \lambda n = \kappa \delta$ for some $\kappa \in \Z$. The general solution to this equation is the sum of a particular solution, for instance $(\lambda,\mu) = -\kappa \cdot (a,b)$, and a solution to the homogeneous equation $\mu m - \lambda n = 0$, which is of the form $(\lambda,\mu) = \ell \cdot (m,n)$ since $\gcd(n,m) = 1$. Thus,
\[
(\lambda, \mu) = (\ell m - \kappa a, \ell n - \kappa b),
\]
and the first coordinate of the corresponding lattice point is
\[
\frac{2\pi}{\delta} \left( b(\ell m - \kappa a) - a(\ell n - \kappa b) \right) = \frac{2\pi}{\delta} \ell (bm - an) = -2\pi \ell \in 2\pi\Z.
\]
Hence, the two points have the same image in $(\R/2\pi\Z)^2$. The result follows.
\end{proof}

\begin{lemma}\label{lem:coamoeba}
$\,$

1. For any $y \in \cC$, the intersection of $\{(\theta,\nu) \in (S^1)^2 : \nu = \arg(y)\}$ with $\Arg_\vp$ consists of $n$ connected components if $\{\nu = \arg(y)\}$ does not pass through $\sS$. Otherwise, the intersection consists of $n-1$ components. 

2. The map $\Arg\colon (\cC)^2 \to (S^1)^2$ induces a bijection between $\sS$ and the set of points $(x,y) \in (\cC)^2$ such that $x$ is a multiple root of $\vp(x,y)$. In particular, the projection of $\sS$ onto $\{1\} \times S^1$ equals $\arg(\sB \setminus \{0\})$.

3. For any $y \in \C \setminus \sB$, each connected component $\mathscr{C}$ of
\[
\big( \{ \nu = \arg(y) \} \cap \Arg_\vp \big) \setminus \sS
\]
contains the projection under $\Arg$ of a single point $p \in \{(x,y) \in (\cC)^2 : \vp(x,y) = 0\}$. If $|y|$ is arbitrarily small, then $\Arg(p)$ is arbitrarily close to the D-geodesic bounding $\mathscr{C}$. If $|y|$ is arbitrarily large, then $\Arg(p)$ is arbitrarily close to the other geodesic bounding $\mathscr{C}$.
\end{lemma}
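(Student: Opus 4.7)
Throughout I build on the covering description from Section~\ref{sec:coamoeba}: $\Arg_\vp=(\psi^{\arg})^{-1}(\Arg_\ell)$, where $\psi^{\arg}\colon(\theta,\nu)\mapsto(m\theta+a\nu,\,n\theta+b\nu)$ is a degree-$\delta$ self-covering of $(S^1)^2$. Since $\GCD(m,n)=1$, the restriction of $\psi^{\arg}$ to the horizontal circle $\{\nu=\nu_0\}$ is a homeomorphism onto its image, the closed geodesic $G_{\nu_0}$ of primitive direction $(m,n)$ in the target torus; hence connected components of $\{\nu=\nu_0\}\cap\Arg_\vp$ are in bijection with components of $G_{\nu_0}\cap\Arg_\ell$, and under pullback the D-, L-, R-geodesics of $\Arg_\vp$ come from the L-, D-, R-geodesics of $\Arg_\ell$ (the labels get permuted because $\psi$ sends the bottom edge of $\conv\{(0,0),(1,0),(0,1)\}$ to the L-edge of $\conv(A)$ and the left edge to the D-edge).

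For Part~1, I will count intersections of $G_{\nu_0}$ with each boundary geodesic of $\Arg_\ell$: the equations $ns\equiv\pi-b\nu_0$, $ms\equiv\pi-a\nu_0$ and $(m-n)s\equiv\pi-(a-b)\nu_0\pmod{2\pi}$ have $n$, $m$ and $n-m$ solutions respectively, yielding $2n$ boundary endpoints and $n$ segments generically. The main technical point is to show that each segment has exactly one endpoint on $\{\nu=\pi\}$ (the preimage of which gives the D-geodesics of $\Arg_\vp$): this follows by inspection of how a line of slope $(m,n)$ traverses each of the two triangles composing $\Arg_\ell$, using that within a single triangle the line enters through one edge and exits through another. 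When $\nu_0$ equals the $\nu$-coordinate of some $s\in\sS$, the geodesic $G_{\nu_0}$ passes through the common vertex $\psi^{\arg}(s)$ of two triangles of $\Arg_\ell$, fusing two otherwise-distinct segments and reducing the count by one.

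For Part~2, equation~\eqref{eq:discriminant} gives $t^bx^n=\tfrac{m}{n-m}>0$ and $t^ax^m=\tfrac{n}{m-n}<0$ at singular points (using $0<m<n$), so that $\Arg(\sing(\vp))$ consists of the $(\theta,\nu)\in(S^1)^2$ with $n\theta+b\nu\equiv 0$ and $m\theta+a\nu\equiv\pi\pmod{2\pi}$. On the other hand, $\sS$ is the common locus of L- and R-geodesics of $\Arg_\vp$, which are the preimages under $\psi^{\arg}$ of $\{\theta=\pi\}$ and of the diagonal geodesic of $\Arg_\ell$; writing these two defining congruences and eliminating one yields exactly the same system. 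Hence $\Arg$ maps $\sing(\vp)$ into $\sS$, and since both sets have cardinality $\delta$ (by the analysis in Section~\ref{sec:bifurcation} and Lemma~\ref{lem:singvert}) this is a bijection. The projection statement then follows because the $t$-coordinate projection $\sing(\vp)\to\sB\setminus\{0\}$ is itself bijective.

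For Part~3, fix $\arg(t)=\nu_0$ and vary $\rho=|t|\in\R_{>0}$, avoiding the $\delta$ values at which $\rho e^{i\nu_0}\in\sB$. On each interval between consecutive such values, the implicit function theorem makes the $n$ roots of $\vp(\cdot,\rho e^{i\nu_0})$ analytic in $\rho$, so their arguments trace continuous curves in the slice, each staying in a single connected component except when the curve crosses a singular vertex (Part~2). The hard part is the asymptotic pinning of endpoints. Dominant-balance analysis of $1+t^ax^m+t^bx^n=0$ under the hypothesis $na-mb>0$ shows that as $\rho\to 0$ the only self-consistent balance is $1+t^bx^n\approx 0$ (the other two force a third term to blow up), so $n\theta+b\nu\to\pi$ and the root's argument approaches the D-geodesic bounding its component; as $\rho\to\infty$, the valid balances are $1+t^ax^m\approx 0$ (contributing $m$ roots asymptotic to L-geodesics) and $t^ax^m+t^bx^n\approx 0$ (contributing $n-m$ roots asymptotic to R-geodesics). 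The resulting endpoint counts match Part~1, forcing each root to occupy its own component after removing the singular vertices, and the asymptotic claims follow.
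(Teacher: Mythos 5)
Your proof follows essentially the same route as the paper's: part 2 via the explicit equations \eqref{eq:discriminant} for the singular locus, part 3 via Newton--Puiseux/dominant-balance asymptotics at $\vert t\vert\to 0,\infty$ combined with continuity of the roots along rays of fixed argument, and part 1 via the covering description of $\Arg_\vp$ --- where you in fact supply the traversal argument (exactly one entry through the preimage of $\{\nu'=\pi\}$ per triangle of $\Arg_\ell$) that the paper merely asserts in the discussion preceding Lemma \ref{lem:singvert}. The one step to tighten is in part 2: equal cardinality of $\sing(\vp)$ and $\sS$ together with the inclusion $\Arg(\sing(\vp))\subset\sS$ does not by itself give a bijection --- add that $\Arg$ is injective on $\sing(\vp)$, which is immediate because $\sing(\vp)$ is a single fiber of the monomial covering $(x,t)\mapsto(t^ax^m,t^bx^n)$ and its deck transformations (multiplication by roots of unity) act freely on arguments.
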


\begin{proof}
1. This part follows from the discussion prior to Lemma~\ref{lem:singvert} and the lemma itself. 

2. Assume that $x$ is a multiple root of $\vp(x,y)$ and let $(\theta, \nu) := (\arg(x), \arg(y))$. From the right-hand side of \eqref{eq:discriminant}, we deduce that $n\theta + b\nu = 0$ and $m\theta + a\nu = \pi$. These two equations characterise the points in $\sS$, according to the proof of Lemma~\ref{lem:singvert}. Conversely, any point of $\sS$ is of the form $(\arg(x), \arg(y))$ for some pair $(x,y) \in (\cC)^2$ such that $x$ is a multiple root of $\vp(x,y)$, by definition of $\sS$. The rest follows.

3. Since the polynomial $\vp$ is of type (1) (see Corollary~\ref{cor:reduction}), the outer normal to the edge $\conv\{(0,0),(n,b)\}$ of $\conv(A)$ points downward, while the outer normals to the two remaining edges point upward. By the Newton–Puiseux theorem, the $n$ solutions to $\vp(x,y) = 0$ are asymptotically equivalent to
\[
(x,y) = \left( (-1/y^b)^{1/n}, y \right)
\]
for any of the $n$ determinations of $(-1/y^b)^{1/n}$ as $|y| \to 0$. Similarly, as $|y| \to \infty$, the $n$ solutions split into two groups:
\[
(x,y) \sim \left( (-1/y^a)^{1/m}, y \right) \quad \text{and} \quad (x,y) \sim \left( (-y^{a-b})^{1/(n-m)}, y \right).
\]

The images of these parametrisations under $\Arg$ map surjectively onto the D-, L-, and R-geodesics, respectively. This yields the second part of the statement.

To prove the first part, note that if $\{\nu = \arg(y)\}$ is disjoint from $\sS$, then for sufficiently small $|y|$, the above asymptotics ensure that each of the $n$ points in $\{(x,y): \vp(x,y) = 0\}$ contributes to a distinct component of $\{\nu = \arg(y)\} \cap \Arg_\vp$. This configuration persists as $|y|$ increases from $0$ to $\infty$, while fixing $\arg(y) = \nu$. This is because the set $\{\vp(x,y) = 0\}$ varies continuously with $y$ and always contains $n$ distinct points for $y \notin \sB$.

If instead $\{\nu = \arg(y)\}$ intersects $\sS$, and $y \notin \sB$, then the image of $\{x:\vp(x,y)=0\}$ under $\Arg$ still consists of $n$ distinct points. Assume for contradiction that several points in the image lie in the same component of $\left( \{ \nu = \arg(y) \} \cap \Arg_\vp \right) \setminus \sS$. By continuity, this configuration would persist upon a small perturbation of $\nu$, and therefore contradict the above paragraph. The result follows.
\end{proof}

\subsubsection{Proof of Theorem \ref{thm:trinomials}}\label{sec:prooftrinomial}
Here, we compute the image under $\mb$ of a generating set of $\pi_1(\C\setminus \sB,y_0)$, relying essentially on Lemma \ref{lem:coamoeba}. 

To do so, we fix a graph $\emph{\Gamma}\subset  \C\setminus \sB$\index{Gamma@$\Gamma$} with $y_0\in\Gamma$ and such that $\pi_1(\C\setminus \sB,y_0)=\pi_1(\Gamma,y_0)$. Concretely, we take $\Gamma$ to be the union of two circles $$C_\varepsilon:=\{y\in\cC:\, \vert y\vert=\varepsilon\} \; \text{ and }\; C_M:=\{y\in\cC:\, \vert y\vert=M\}$$ for $\varepsilon,M>0$ arbitrarily small and large respectively, together with $\delta$ many segments joining $C_\varepsilon$ to $C_M$. Each segment is contained in a half ray $\{y\in\cC:\, \arg(y)=\theta\}$, the segments have equidistributed arguments $\theta$ on $S^1$ and share the same distance to $\sB\setminus \{0\}$. This is possible since $\sB\setminus \{0\}$ is equidistributed on some circle $C_\rho$, see Section \ref{sec:bifurcation}. All together, the graph $\Gamma$ looks like a circular railway track as pictured in Figure \ref{fig:gamma}.

\begin{figure}
\centering
\def\svgwidth{\linewidth} 
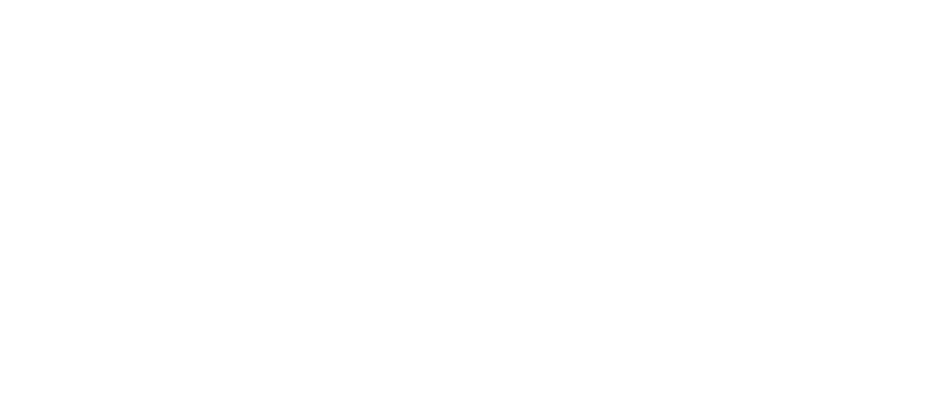
\caption{The graph $\Gamma$ and the generators $\ell_0,\ell_1,\ldots,\ell_\delta$ of $\pi_1(\Gamma,y_0)$.}
\label{fig:gamma}
\end{figure}

We fix the base point $y_0\in \Gamma$ as a midpoint on an edge of $\Gamma$ contained in $C_\varepsilon$. In particular, the modulus $|y_0|=\varepsilon$ is arbitrarily small. We also fix the set of generators $\ell_0,\ell_1,\ldots,\ell_\delta$ of $\pi_1(\Gamma,y_0)$ as pictured in Figure \ref{fig:gamma}. 
Recall the elements $b_1,\ldots,b_n,\tau \in \bn$ defined in Figure \ref{fig:braids}. We have the following.

\begin{lemma}\label{lem:braidtrinomial}
For any $j \in \{1,\ldots,\delta\}$, we have $\mb(\ell_j) = b_k$ for some $k \in \{1,\ldots,n\}$. Conversely, we have the inclusion $\{b_1,\ldots,b_n\} \subset \im(\mb)$. Moreover, we have $\mb(\ell_0) = \tau^{b}$.
\end{lemma}

Before proving the lemma, let us fix the labelling of the roots of $\vp(x,y_0)$ as follows: choose an arbitrary root to be labelled $1$ and label the remaining roots by increasing argument. Consequently, their projection under $\Arg$ appears in order on the horizontal section $\{\nu = \arg(y_0)\}$ of $\Arg_\vp$, see Figure~\ref{fig:ordering}. The horizontal section $\{\nu = \arg(y_0)\}$ splits the union of the D-geodesics into $n$ open segments, which we refer to as \textit{D-segments}. Recall that, by the construction of $\Gamma$, the choice of $y_0$, and Lemma~\ref{lem:coamoeba}, the upper endpoint of each D-segment is arbitrarily close to a labelled point of $\Arg(\{\vp(x,y_0)=0\})$. We label the D-segments accordingly. The union of the D-geodesics and the horizontal geodesic $\{\nu = \arg(y_0)\}$ splits $(S^1)^2$ into $n$ components, which we refer to as \textit{D-parallelograms}, and which we label according to the D-segment on their left, see again Figure~\ref{fig:ordering}. Finally, the $\gcd(n,b)$ many D-geodesics split $(S^1)^2$ into $\gcd(n,b)$ components, which we refer to as the \textit{D-stripes}.

\begin{figure}
\centering
\def\svgwidth{\linewidth} 
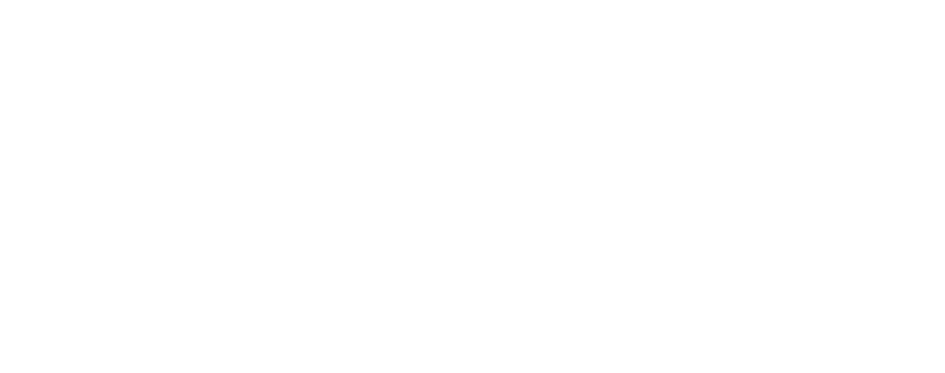
\caption{The labelling of the roots of $\vp(x,y_0)$, of the D-segments, and the first D-parallelogram $P_1$.}
\label{fig:ordering}
\end{figure}

\begin{proof}
Fix a parametrisation $s \mapsto y(s)$ of $\ell_0$.  
According to item~3 in Lemma~\ref{lem:coamoeba}, the projection under $\Arg$ of $\{(x,y(s)) \in (\cC)^2 : \vp(x,y(s)) = 0\}$ is arbitrarily close to the intersection of the horizontal section $\{\nu = \arg(y(s))\}$ with the union of the D-geodesics. Thus, the projection under $\Arg$ of each point of this set follows a trajectory as close as desired to a straight line of slope $(-b,n)$. Thus, we have $\mb(\ell_0) = \tau^b$.

For any $j \in \{1,\ldots,\delta\}$, we can find a locally injective parametrisation $[0,1] \rightarrow \cC$, $s \mapsto y(s)$ of $\ell_j$ such that there exist $0 \leqslant s_1 < s_2 \leqslant 1$ satisfying $y([0,s_1]) = y([s_2,1])$ and such that the restriction of $s \mapsto y(s)$ to $]s_1,s_2[$ is injective. Observe that $(s_1, s_2) = (0,1)$ exactly when $j = 1$. Let us restrict to the case $j > 1$, as the result for $j = 1$ is similar.

As we have seen above, the $n$ points of $\Arg\big(\{(x,y(s)) \in \cC : \vp(x,y(s)) = 0\}\big)$ follow tightly the D-geodesics downwards in $S^1 \times S^1$ when $s$ travels along $[0,s_1]$, and follow the same trajectory upwards between $s_2$ and $1$. We now focus on the segment $[s_1,s_2]$. Let $\nu_1, \nu_2 \in S^1$ be such that $\arg\big(y([s_1,s_2])\big) \subset S^1$ is the arc between $\nu_1$ and $\nu_2$ with $\arg(y(s_1)) = \arg(y(s_2)) = \nu_1$. We denote this arc, somewhat abusively, by $[\nu_1, \nu_2]$.

We claim that the intersection of $\Arg_{\vp}$ with $S^1 \times [\nu_1, \nu_2]$ consists of exactly $n-1$ connected components. Indeed, the intersection of $S^1 \times [\nu_1, \nu_2]$ with the union of the D-geodesics consists of $n$ segments. Each component of $\Arg_{\vp} \cap (S^1 \times [\nu_1, \nu_2])$ contains exactly one such segment, except the one component that contains a singular vertex of $\Arg_{\vp}$ (by item~2 in Lemma~\ref{lem:coamoeba}, there is exactly one such vertex since $\ell_j$ encloses exactly one point of $\sB$). The claim follows.

According to item~3 in Lemma~\ref{lem:coamoeba}, when $s$ goes from $s_1$ to $s_2$, each of the $n$ points of the subset $\Arg\big(\{(x,y(s)) \in \cC : \vp(x,y(s)) = 0\}\big)$ travels arbitrarily close to the boundary of one of the connected components of $\Arg_{\vp} \cap (S^1 \times [\nu_1, \nu_2])$, see Figure~\ref{fig:travel}. More precisely, each of these $n$ points travels horizontally when $y(s)$ traverses part of the circles $C_\varepsilon$ and $C_M$, and follows tightly part of a boundary geodesic of $\Arg_{\vp}$ otherwise. In particular, exactly two branches of $s \mapsto \Arg\big(\{(x,y(s)) \in \cC : \vp(x,y(s)) = 0\}\big)$ cross each other at the unique point of $\sS \cap (S^1 \times [\nu_1, \nu_2])$.
The image of $\ell_j$ under $\mb$ is therefore of the form $b_k^{\pm 1}$ for some $1 \leqslant k \leqslant n$. Plainly, the integer $k$ is the label of the D-parallelogram containing the singular vertex involved. Although it is not of crucial importance, it can be shown that, according to our conventions, we have $\mb(\ell_j) = b_k$, see Figure~\ref{fig:travel}.

In order to conclude, we need to prove that all $b_k$, $1 \leqslant k \leqslant n$, are in the image of $\mb$. It is not necessarily true that every $b_k$ appears as $\mb(\ell_j^{\pm 1})$, since it is not guaranteed that every D-parallelogram contains a singular vertex. However, we claim that any $b_k$ appears as $\mb(\ell_j^{\pm 1} \circ \ell_0^p)$ for an appropriate integer $p$. According to the above discussion, this is equivalent to saying that each D-stripe contains a singular vertex. But this is clear, since at least one such stripe contains a singular vertex and since, for two different stripes $S_1$ and $S_2$, we have that $\Arg_\vp \cap S_1$ is a translation of $\Arg_\vp \cap S_2$ in the argument torus $S^1 \times S^1$. Therefore, any such stripe contains a singular vertex. This implies the inclusion $\{b_1,\ldots,b_n\} \subset \im(\mb)$ and concludes the proof.
\end{proof}

\begin{figure}
\centering
\def\svgwidth{\linewidth} 
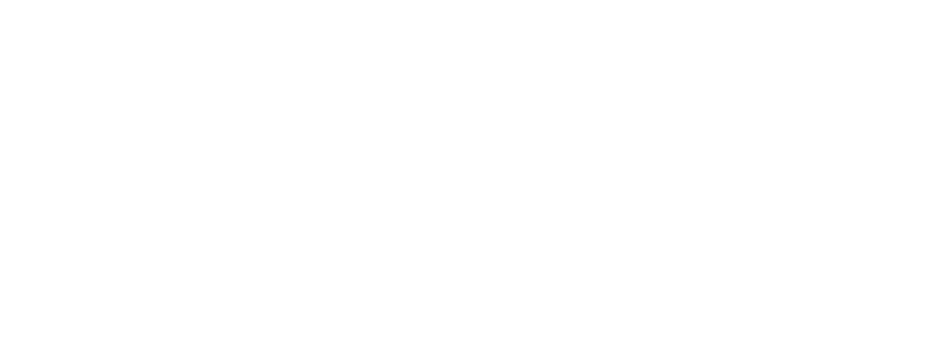
\caption{The image of $\ell_j$, $j>1$, under $\mb$.}
\label{fig:travel}
\end{figure}

\begin{proof}[Proof of Theorem \ref{thm:trinomials}]
By Corollary \ref{cor:reduction} and Section \ref{sec:type2}, it suffices to prove the theorem in the case of trinomials of type (1). In this case, we have $d = \gcd(n, m) = 1$, and thus $\bnd = \bn$. 
Therefore, we only need to show that $\im(\mb) = \rtn$.
Here, we have $\vt = b$. 
Since the elements $b_1, \ldots, b_n$ generate $\ker(\indn)$ by Lemma \ref{lem:kernel}, 
and $\indn(\tau^b) = b$, the elements $b_1, \ldots, b_n, \tau^b$ generate $\rtn$. 
The result now follows from Lemma \ref{lem:braidtrinomial}.
\end{proof}

\subsection{Proof of Theorem \ref{thm:main}}

According to Proposition \ref{prop:reductionk=1}, we can restrict our attention to the case $k = 1$ while proving Theorem \ref{thm:main}.
By Proposition \ref{prop:redd=1}, we can even restrict to supports $A$ such that $d(A) = 1$.
Thus, we fix a support $\{(0,0)\} \subset A \subset \Z^2$ with $d(A) = 1$ and horizontal width $N := N(A)$,
and fix a generic polynomial $\vp \in \CC^A$.

Let us briefly describe the overall strategy of the proof.
Let $\rho:[0,1] \to \C^A$, $s \mapsto \vp_s$, be a continuous path 
from $\vp$ to some other polynomial $\wt\vp \in \C^A$, and denote by $\sB_s \subset \C$ the branching locus associated with $\vp_s$.
Fix a continuous path $\gamma: s \mapsto y_{0,s} \in \C \setminus \sB_s$ of base points. 
The pair $(\rho, \gamma)$ is said to be \textit{$A$-suitable} if

-- the support $A_s$ of $\vp_s$ has horizontal width $N(A_s) = N$ for all $0 \leqslant s \leqslant 1$,

-- the family of fundamental groups $\pi_1(\C \setminus \sB_s, y_{0,s})$ is locally trivial for $0 \leqslant s < 1$.\\ 
Clearly, the pair $(\rho, \gamma)$ induces an injective morphism $\im(\mu_{\wt\vp}) \rightarrow \im(\mb)$.
In order to prove Theorem \ref{thm:main}, we will take $\wt\vp$ to be a trinomial and determine the image of the map $\im(\mu_{\wt\vp}) \rightarrow \im(\mb)$ using Theorem \ref{thm:trinomials}.
Using the Euclidean algorithm \cite[Proposition 4.1]{EL}, we will then argue that the images of these morphisms generate the expected braid monodromy group once sufficiently many trinomials $\wt\vp$ are considered.
It will be crucial to observe that different $A$-suitable pairs $(\rho, \gamma)$ lead to different maps $\im(\mu_{\wt\vp}) \rightarrow \im(\mb)$. We will choose these pairs carefully.
In that regard, it will be helpful to keep the following in mind. 

\begin{rem}\label{rem:B_n,d}
For a general support $\{(0,0)\} \subset A \subset \Z^2$, the base point $\cN$ of the braid group $\bn := \pi_1(\UConf_N(\cC), \cN)$ is globally invariant under the map $x \mapsto e^{2i\pi/d}x$, where $d := d(A)$. 
In turn, this map induces an automorphism $\iota$ on $\bn$, and 
$\bnd$ can be alternatively defined as the invariant subgroup under $\iota$. 
If the base point $\cN$ of $\bn$ is not globally invariant under $x \mapsto e^{2i\pi/k}x$ for some integer $k \geqslant 2$, a choice must be made to define the subgroup $B_{\sN,k}^\star \subset \bn$, namely, we must specify a path in $\UConf_N(\cC)$ joining $\cN$ to a configuration invariant under $x \mapsto e^{2i\pi/k}x$.
\end{rem}

Recall that the Hausdorff distance on compact subsets of $\cC$ induces a metric on the configuration space $\UConf_N(\cC)$. 
Fix a binomial $\beta \in \C^{A_0}$, where $A_0 \subset A$ has horizontal width $N$. 
Fix $\varepsilon > 0$ arbitrarily small, and choose an open ball $\cV \subset \C^A$ containing $\beta$ such that 

-- for any $\hat\vp \in \cV$, the associated branching locus $\sB_{\hat\vp} \subset \C$ does not contain $1$,

-- for any $\hat\vp \in \cV$, the configuration $\cN_{\hat\vp} := \{x \in \cC : \wt\vp(x,1) = 0\}$ is contained in the $\varepsilon$-neighbourhood of $\cN_\beta$ in $\UConf_N(\cC)$.\\ 
The existence of $\cV$ follows from the facts that $\sB_{\beta} = \{0\}$ (and thus $1 \notin \sB_{\beta}$), and that both $\sB_{\hat\vp}$ and $\cN_{\hat\vp}$ depend continuously on $\hat\vp$.

As discussed in Remark \ref{rem:B_n,d}, any path $\rho \subset \cV$ from $\hat\vp$ to $\beta$ allows one to define the subgroup $B_{\cN_{\hat\vp},\tilde d}^\star \subset B_{\cN_{\hat\vp}}^\star$ for any divisor $\tilde d$ of $N$. This is because the set $\cN_\beta$ is invariant under multiplication by $e^{2i\pi/N}$. 
Since $\cV$ is simply connected, this subgroup is independent of the path $\rho$.
Moreover, any such path defines an isomorphism between $B_{\cN_{\hat\vp}}^\star$ and $B_{\cN_\beta}^\star$, which is also path-independent for the same reason. 
In particular, the braid groups $B_{\cN_{\hat\vp}}^\star$, $\hat\vp \in \cV$, are mutually isomorphic. \hfill $(\ast)$

Since we are allowed to choose $\vp$ in some Zariski-open subset of $\CC^A$, we take $\vp \in \cV$. 
In particular, we can make sense of the subgroup $B_{\sN,\tilde d}^\star \subset \bn$ for any divisor $\tilde d$ of $N$. We have the following:

\begin{lemma}
For any $A_0 \subset \wt A \subset A$ with $\vert \wt A \vert = 3$ and any generic $\wt\vp \in \C^{\wt A} \cap \cV$, there exists a path $\rho \subset \cV$ from $\vp$ to $\wt\vp$ such that the pair $(\rho, \gamma)$ is $A$-suitable, where $\gamma$ is constant and equal to $1$. 
For any such path, the image of the corresponding morphism $\im(\mu_{\wt\vp}) \rightarrow \im(\mb)$ is 
$$
B_{\sN,d(\wt A)}^\star \cap R_{\sN}^{\vt(\wt A)} \subset \bn.
$$
\end{lemma}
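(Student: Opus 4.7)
The plan is to build the path $\rho$ by perturbing a linear segment inside $\cV$, check $A$-suitability, and then combine Theorem \ref{thm:trinomials} with observation $(\ast)$ to identify the image of the induced morphism.

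I would first consider the linear segment $s\mapsto \vp_s:=(1-s)\vp+s\wt\vp$, which lies in $\cV$ since $\cV$ is an open ball containing both endpoints. Because $\vp$ is generic, all its coefficients at positions of $A$ are nonzero; hence those of $\vp_s$ at positions of $A\setminus\wt A$ are nonzero for $s\in[0,1)$. In particular $A_0\subset A_s$ for every $s\in[0,1]$, so $N(A_s)=N$ throughout. The second $A$-suitability condition---local triviality of $\pi_1(\C\setminus \sB_s,1)$ on $[0,1)$---holds precisely when $|\sB_s|$ is constant there, that is, when the path avoids a proper closed analytic subset $W\subset\C^A$ along which two points of $\sB_{\hat\vp}$ collide or escape to $\{0,\infty\}$. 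A generic $C^\infty$-small perturbation of the linear segment, fixing its endpoints, meets $W$ only at $\wt\vp$, yielding the desired $A$-suitable $\rho$.

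Next, Theorem \ref{thm:trinomials} applied to the trinomial $\wt\vp\in\C^{\wt A}$ gives
\[
\im(\mb_{\wt\vp})\;=\;B^*_{\cN_{\wt\vp},\,d(\wt A)}\;\cap\;R^{\vt(\wt A)}_{\cN_{\wt\vp}}.
\]
Parallel transport along $(\rho,\mathfrak{t})$ induces an isomorphism $\Phi:B^*_{\cN_{\wt\vp}}\to B^*_\cN$, and the morphism $\im(\mb_{\wt\vp})\to\im(\mb_\vp)$ of the lemma is the restriction of $\Phi$. I would then verify that $\Phi$ sends the above intersection onto $B^*_{\cN,\,d(\wt A)}\cap R^{\vt(\wt A)}_\cN$. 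For the index factor, the product map $c\mapsto\prod_{x\in c}x$ varies continuously with $c$, so $\Phi$ intertwines $\ind_{\cN_{\wt\vp}}$ with $\ind_\cN$ under the canonical identification $\pi_1(\C^*)\simeq\Z$, whence $\Phi(R^{\vt(\wt A)}_{\cN_{\wt\vp}})=R^{\vt(\wt A)}_\cN$. For the $d$-invariant factor, observation $(\ast)$ applies: both endpoints of $\rho$ lie in the simply connected open set $\cV$, so the subgroup $B^*_{\cN_{\hat\vp},\,d(\wt A)}$ is canonically defined for all $\hat\vp\in\cV$ and is preserved by the isomorphism induced by any path inside $\cV$; since $\rho\subset\cV$, this gives $\Phi(B^*_{\cN_{\wt\vp},\,d(\wt A)})=B^*_{\cN,\,d(\wt A)}$. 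The two facts combine to the claimed equality.

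The main obstacle is the analytic step: arguing that a generic smooth path in $\cV$ indeed meets the wall locus $W$ only at $s=1$. This reduces to the statement that $W$ is a proper subvariety of $\C^A$, equivalently that $\sB_{\hat\vp}$ is a reduced zero-dimensional subset of the expected cardinality for generic $\hat\vp\in\C^A$. This is the very same genericity condition already invoked in Remark \ref{rem:general}.1 (transversality to $E_{A_x}$) and follows from standard properties of the principal $A_x$-determinant.
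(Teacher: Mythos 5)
Your proposal is correct and follows essentially the same route as the paper: existence of the $A$-suitable pair from the constancy of $\vert\sB_{\hat\vp}\vert$ on a Zariski-open subset of $\C^A$, then Theorem \ref{thm:trinomials} applied to $\wt\vp$, and finally the observation $(\ast)$ to check that the path-induced isomorphism carries $B^*_{\cN_{\wt\vp},d(\wt A)}$ and $R^{\vt(\wt A)}_{\cN_{\wt\vp}}$ to the corresponding subgroups of $B^*_\cN$ (the $\ind$ factor by continuity of the product map, the $d(\wt A)$ factor by the simply-connectedness of $\cV$). Your version is merely more explicit about perturbing the linear segment to achieve suitability, which the paper leaves implicit.
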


\begin{proof}
It is an elementary fact that the cardinality of $\sB_{\hat\vp}$ is constant and maximal on a Zariski-open subset of $\C^A$.
In particular, the fundamental group $\pi_1(\C \setminus \sB_{\hat\vp}, 1)$ is locally constant on an open dense subset of $\cV$. This proves the existence of the $A$-suitable pair $(\rho, \gamma)$. 
By Theorem \ref{thm:trinomials}, we have
$$
\im(\mu_{\wt\vp}) = B_{\cN_{\wt\vp},d(\wt A)}^\star \cap R_{\cN_{\wt\vp}}^{\vt(\wt A)} \subset B_{\cN_{\wt\vp}}^\star.
$$
Therefore, it suffices to show that the isomorphism $(\ast)$ between $B_{\cN_{\wt\vp}}^\star$ and $\bn$ maps
$$
B_{\cN_{\wt\vp},d(\wt A)}^\star \text{ to } B_{\cN,d(\wt A)}^\star \quad \text{and} \quad R_{\cN_{\wt\vp}}^{\vt(\wt A)} \text{ to } R_\cN^{\vt(\wt A)}.
$$
The latter is clear since $(\ast)$ maps $\ind_{\cN_{\wt\vp}}$ to $\indn$. 
The former follows from the definition of the subgroup $B_{\cN_{\hat\vp},\tilde d}^\star \subset B_{\cN_{\hat\vp}}^\star$ for any divisor $\tilde d$ of $N$ and $\hat\vp \in \cV$. 
\end{proof}

Let $\{\wt A_i\}_{i \in I}$ be the set of all supports $\wt A$ as in the lemma above. 
Denote $\tilde d_i := d(\wt A_i)$ and $\tilde \vt_i := \vt(\wt A_i)$. 
Then, we have $\gcd(\{\tilde d_i\}_{i \in I}) = d(A) = 1$. 
By the lemma above, we have $B_{\cN,\tilde d_i}^\star \cap R_\cN^0 \subset \im(\mb)$ for all $i \in I$. 
By the Euclidean algorithm \cite[Proposition 4.1]{EL} and Lemma \ref{lem:kernel}, the subgroups $B_{\cN,\tilde d_i}^\star \cap R_\cN^0$ together generate $R_\cN^0 = \ker(\indn)$. 
Thus, we have $\ker(\indn) \subset \im(\mb)$.

Finally, it remains to construct an element $\sigma \in \im(\mb)$ such that $\indn(\sigma) = \vt(A)$. 
By the lemma above, there exists $\sigma_i \in \im(\mb)$ such that $\indn(\sigma_i) = \tilde \vt_i$. 
Thus, there is $\sigma \in \im(\mb)$ such that 
$$
\indn(\sigma) = \gcd(\{\tilde \vt_i\}_{i \in I}) = \vt(A).
$$
This concludes the proof of Theorem \ref{thm:main}. \hfill $\qed$

\subsection{Proof of Theorem \ref{thm:maingalois}}\label{sec:proofmaingalois}
In order to prove Theorem \ref{thm:maingalois}, it suffices to show that the image of $B_A = \rtn \cap \bnd$ under the map $\pin:\bn \rightarrow \symn$ is $\indnp^{-1}\big(\left\langle e^{2i\pi\vt/d} \right\rangle\big)$. To see this, we claim that there is a commutative diagram
\begin{equation}\label{e:comdiag}
    \xymatrixcolsep{3pc}
    \xymatrixrowsep{2pc}
    \xymatrix{
        \bnd \ar[d]_{\indn} \ar@{->>}[r]^{\pin} & \symnd \ar[d]^{\indnp} \\
        \Z \ar@{->>}[r]^{\emph{\rho}}  & U_d
    }
\end{equation}
such that $\pin\big(\ker(\indn)\big) = \ker(\indnp)$. If so, the commutativity of the diagram and the surjectivity of the horizontal arrows imply that $\indnp(G_A) = \rho(\rtn) = \left\langle e^{2i\pi\vt/d} \right\rangle$. To prove that $G_A = \indnp^{-1}\big(\left\langle e^{2i\pi\vt/d} \right\rangle\big)$, it remains to show that $\ker(\indnp) \subset G_A$. But since $\ker(\indn) \subset B_A$ and $G_A = \pin(B_A)$, the inclusion $\ker(\indnp) \subset G_A$ follows from the equality $\pin\big(\ker(\indn)\big) = \ker(\indnp)$.

Let us now verify the existence of the commutative diagram with the desired properties. Clearly, the image of $\bnd$ under $\pin$ is the $U_d$-equivariant subgroup $\symnd \subset \sym_{\cN}$. We identify the target $\pi_1(\cC, x_0)$ of $\indn$ with $\Z$ via the choice of the generator $t \mapsto x_0 \cdot e^{2i\pi t}$, $0 \leqslant t \leqslant 1$. Observe that any braid $\beta \in \bnd$ can be represented by a collection of paths $t \mapsto x(t)$, $0 \leqslant t \leqslant 1$, indexed by $x \in \cN$. From this representative of $\beta$, we construct the element $\gamma: t \mapsto \prod_{x \in \cN} x(t)$ of $\pi_1(\cC, x_0)$. With these conventions, the sought diagram is the following:
\[
\xymatrixcolsep{7pc}
\xymatrixrowsep{3pc}
\xymatrix{
    \bnd \ar[d]_{\beta \mapsto \frac{1}{2i\pi}\int_\gamma\frac{dz}{z}} \ar@{->>}[r]^{\beta \mapsto (x \mapsto x(1))_{x \in \cN}} & \symnd \ar[d]^{\sigma \mapsto \prod_{x \in E} \frac{\sigma(x)}{x}} \\
    \Z \ar@{->>}[r]^{\theta \mapsto e^{2i\pi \theta/d}}  & U_d
}
\]
where the formula $\sigma \mapsto \prod_{x \in E} \frac{\sigma(x)}{x}$ for $\indnp$ does not depend on the choice of $E \subset \cN$. The commutativity of the diagram is now an easy exercise in complex analysis, showing that the composition of the two arrows in the diagram equals the map $\beta \mapsto \exp\left(\frac{1}{2i\pi}\int_{\gamma_E}\frac{dz}{z}\right)$, where $\gamma_E: t \mapsto \prod_{x \in E} x(t)$.

To see that $\pin\big(\ker(\indn)\big) = \ker(\indnp)$, recall that $\symnd$ is isomorphic to the wreath product $U_d \ \textrm{wr}_{N/d}\ \sym_{N/d}$ and that $\indnp$ acts as $(\xi_1,\ldots,\xi_{N/q}, \sigma) \mapsto \prod_{j} \xi_j$. The kernel of $\indnp$ is generated by elements of the form $(1,\ldots,1,\sigma)$ and $(\xi_1,\ldots,\xi_{N/q}, \id)$ such that $\prod_j \xi_j = 1$. Clearly, such elements belong to $\pin\big(\ker(\indn)\big)$.
\hfill $\qed$

\begin{rem}\label{rem:O}
Let us briefly comment on the Zariski-open subset $\mathscr{O}$ appearing in Theorems \ref{thm:main} and \ref{thm:maingalois}. Recall that $\uA$ denotes the projection of $A$ onto the first coordinate axis in $\Z^{k+1}$. Then the branching locus $\sB$ is the pullback of the principal $\uA$-determinant $E_{\uA} \subset \C^{\uA}$ by the coefficient map $\mathscr{C}_\vp : (y_1, \ldots, y_k) \mapsto (c_j)_{j \in \Ax}$ (see \cite[Chapter 10]{GKZ}). Theorems \ref{thm:main} and \ref{thm:maingalois} apply to any polynomial $\vp$ such that the image of $\mathscr{C}_\vp$ intersects $E_{\uA}$ transversally. This is a Zariski-open condition in $\C^A$ (see \cite[Theorem 6.6.2]{ESV}). The actual Zariski-open subset $\mathscr{O}$ of polynomials $\vp$ for which the conclusions of Theorems \ref{thm:main} and \ref{thm:maingalois} hold may in fact be larger. We investigate this question further in \cite[Section 2]{EL3}, for support sets $A$ whose Galois group $G_A$ is the full symmetric group.
\end{rem}

\subsection{Discussions: kernel, presentation and isomonodromy}\label{sec:discussion} In this section, we discuss how the explicit methods of Section \ref{sec:coamoeba} allow us to determine the kernel of $\mb$, obtain a presentation of the braid monodromy group $B_A$, and study isomonodromy loci.

As discussed in the proof of Lemma \ref{lem:braidtrinomial}, we can explicitly determine the image under $\mb$ of the generators $\ell_j$, $0 \leqslant j \leqslant \delta$, of $\pi_1(\C \setminus \sB, y_0)$ when $\vp$ is a trinomial. Recall that $\mb(\ell_0) = \tau^b$ and that $\mb(\ell_j) = b_k$ for $j > 1$ and for some $1 \leqslant k \leqslant n$. The integer $k$ is determined as follows: $\ell_j$ encompasses exactly one point of $\sB$ in $\cC$, this point maps to a singular vertex under $\Arg$, and this singular vertex lies in the interior of one of the D-parallelograms covering $(S^1)^2$. The corresponding parallelogram is indexed by some integer $k$, so that $\mb(\ell_j) = b_k$.

To determine the kernel of $\mb$ as well as a presentation of the corresponding braid monodromy group, it suffices to:

-- know the relations between the elements $\tau^b$, $b_1, \cdots, b_n$ of $\bn$ (see \textit{e.g.} \cite{Lam}),

-- determine the image and cardinality of the fibers of the map $j \mapsto k$.\\
For the latter, observe that the singular vertices are equidistributed along a collection of $\gcd(n,b)$ geodesics parallel to the D-geodesics. These are the preimages under the map
$$S^1 \rightarrow S^1, \quad (\theta,\nu) \mapsto (m\theta + a\nu,\, n\theta + b\nu)$$
of the geodesic $\{\theta = 0\}$; see Section \ref{sec:coamoeba}. We refer to these geodesics as \textit{D'-geodesics}. Each D'-geodesic decomposes into labelled \textit{D'-segments}, which are the intersections of the geodesic with labelled D-parallelograms. Therefore, the image and cardinality of the fibers of $j \mapsto k$ are determined by the distribution, on each D'-geodesic, of the $\delta/\gcd(n,b)$ equidistributed singular vertices relative to the $n/\gcd(n,b)$ equidistributed intersection points with the horizontal section $\{\nu = \arg(y_0)\}$. This can be computed for any specific trinomial $\vp$ and depends solely on the arithmetic of the support $A := \{(0,0), (m,a), (n,b)\}$.\smallskip

Next, let us briefly discuss how the above observations on trinomials extend to arbitrary supports $A \subset \Z^2$ using tropical geometry. Consider a regular triangulation $\mathcal{T}$ of $\conv(A)$ whose vertex set is $A$, that is, there is a piecewise-linear convex function $f : \conv(A) \rightarrow \R$ whose domains of linearity are exactly the triangles in $\mathcal{T}$ (see \textit{e.g.} \cite[Section 2.1]{HPPS} for existence). We further assume that no inner edge of $\mathcal{T}$ is vertical. This can be achieved by taking a smaller subset $\wt A \subset A$ with the same invariants $N$, $d$, and $\vt$, and a triangulation $\mathcal{T}$ supported on $\wt A$ rather than $A$. Then, we fix
\begin{equation}\label{eq:phiViro}
\vp(x,y):= \sum_{a := (a_1,a_2) \in A} s^{f(a)} x^{a_1} y^{a_2}
\end{equation}
for $s > 0$ arbitrarily small. Such a polynomial is known as a Viro polynomial, a central object in tropical geometry (see \cite{BIMS}). The Viro polynomial $\vp$ defines a tropical curve $C \subset \R^2$, where $\R^2$ is the plane with coordinates $(\mathbf{x}, \mathbf{y}) := \Log(x,y) := (\log|x|, \log|y|)$. The tropical curve $C$ is a piecewise-linear graph dual to the subdivision $\mathcal{T}$ of $\conv(A)$ (see \cite[Proposition 2.5]{BIMS}). Any triangle $T \subset \mathcal{T}$ defines a trinomial
$$\vp_T(x,y) := \sum_{a := (a_1, a_2) \in T} s^{f(a)} x^{a_1} y^{a_2},$$
which is dual to a vertex $v_T$ of $C$, see Figure \ref{fig:isomonodromy}.$(a)$ and $(b)$. Viro's Patchworking Theorem states that for any neighbourhood $\cV_T \subset \R^2$ of $v_T$, the set $\Log^{-1}(\cV_T) \cap \{\vp(x,y) = 0\}$ is a small deformation of $\Log^{-1}(\cV_T) \cap \{\vp_T(x,y) = 0\}$ (see \cite[Section 2.3.2]{IMS}). Moreover, it describes how the pieces $\Log^{-1}(\cV_T) \cap \{\vp(x,y) = 0\}$ glue together, for all triangles $T \subset \mathcal{T}$. Using the trinomial case, this allows one to compute the image of the generators of $\pi_1(\C \setminus \sB, y_0)$ under $\mb$, determine the kernel of $\mb$, and obtain a presentation of $B_A$.\smallskip

We now explain how the explicit description of the map $\mb$ allows one to derive information on isomonodromy loci, i.e., subsets of the form
$$\Mon(B) := \{ \vp \in \C^A : \im(\mb) = B \}$$
for a prescribed subgroup $B \subset \bn$. The first instance is the set $\Mon(B_A)$. In Remark \ref{rem:O}, we mentioned that $\Mon(B_A)$ contains the Zariski-open subset of $\C^A$ consisting of polynomials $\vp$ for which the cardinality of the bifurcation set $\sB$ is maximal. We now illustrate that this containment is generally strict.

Consider, for instance, a sharp support set $A \subset \Z^2$ contained in a vertical strip $[0,n] \times \Z$, including $(0,0)$ and $(n,b)$ as vertices and lying in the upper half-plane defined by the line $\R \cdot (n,b)$. Assume further that $(n,b)$ is primitive, and that there exists a point $(m,a) \in A$ with $\gcd(m,n) = 1$. Then the trinomial $\wt A := \{(0,0), (m,a), (n,b)\}$ has the same invariants as $A$, namely $d(\wt A) = d(A) = 1$, $N(\wt A)= N(A) = n$, and $\vt(\wt A) = \vt(A)$. Define
$$D := \Vol(A) - \Vol(\wt A) + 1,$$
where $\Vol$ denotes twice the Euclidean area of the convex hull of its argument. Also, write $A_0 := \{(0,0), (n,b)\}$ and $A^\star := A \setminus A_0$. Then we claim that
\begin{equation}\label{eq:isomonodromy}
\left\{ \wt \vp \in (\cC)^{A_0} \times \C^{A^\star} : \sB \text{ has at least } D \text{ simple points in } \cC \right\} \subset \Mon(B_A).
\end{equation}
Whenever $A$ is sharp, the maximal cardinality of $\sB$ for $\vp \in \C^A$ is $\Vol(A) + 1$. This follows from the Bernstein-Kouchnirenko-Khovanskii Theorem, noting that $0 \in \C$ must lie in $\sB$ for sharp supports. In particular, the left-hand side of \eqref{eq:isomonodromy} strictly contains the set of polynomials $\vp$ such that $|\sB| = \Vol(A) + 1$.
\begin{figure}[t]
\centering
\def\svgwidth{\linewidth} 
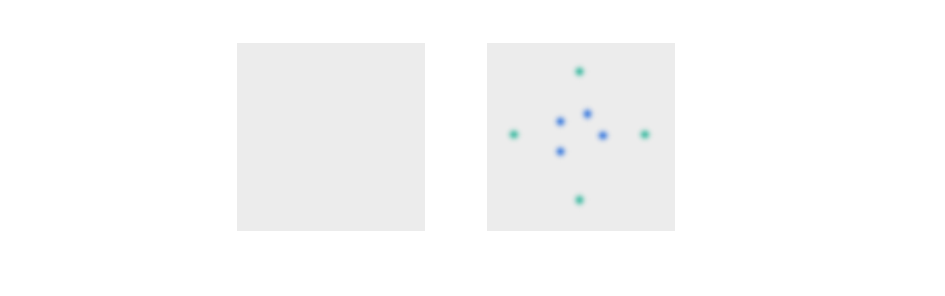
\caption{(a) The triangulated support set $A$ with $\conv(\wt A)$ in blue. (b) The corresponding tropical curve $C \subset \R^2$. Coloured halos illustrate the duality with the triangulation of $A$. (c) The bifurcation set $\sB$ for $\vp$ as in \eqref{eq:phiViro}. We can choose a skeleton $\Gamma$ of $\C \setminus \sB$ as the juxtaposition of two railway tracks and compute $\mb$ explicitly on the generators of $\pi_1(\Gamma)$. Again, colored halos illustrate duality. (d) The deformation of $\sB$ along a path from $\vp$ as in \eqref{eq:phiViro} to $\wt \vp$ as in the left-hand side of \eqref{eq:isomonodromy}. In this example, $D = 5$.}
\label{fig:isomonodromy}
\end{figure}

Let us outline the proof of \eqref{eq:isomonodromy} in the case where $A$ consists of four points and admits a triangulation with exactly two triangles, one of which is $\conv(\wt A)$ (see Figure \ref{fig:isomonodromy}). For $\vp$ as in \eqref{eq:phiViro}, the bifurcation set $\sB$ appears as in Figure \ref{fig:isomonodromy}.$(c)$. The generators of $\pi_1(\Gamma)$ encircling the blue points map to elements $b_k \in \bn$ under $\mb$, while the circle around $0$ maps to $\tau^b$. Now consider a path in $\C^A$ from $\vp$ to $\wt\vp$ as in the left-hand side of \eqref{eq:isomonodromy}. By cardinality, at least one of the blue points in $\sB$ remains simple along the deformation. We deduce that $\im(\mu_{\wt\vp})$ contains $\tau^b$ and some $b_k$. As $n$ and $b$ are coprime, every element $b_j$ arises as a conjugate of $b_k$ by a power of $\tau^b$. Hence, $\im(\mu_{\wt\vp})$ contains $\{b_1, \ldots, b_n\} = \ker(\ind_{\wt\cN})$. Since $\ind_{\wt\cN}(\tau^b) = b = \vt(A)$, we conclude that $\im(\mu_{\wt\vp}) \supset R_{\wt\cN}^b = B_A$. The latter inclusion, which must be an equality, proves the claim in \eqref{eq:isomonodromy}.

Finally, observe that the integer $D$ can be as small as $2$ for suitable choices of $A$, while $\Vol(A)$ can be made arbitrarily large. This illustrates that for certain choices of $A$, the isomonodromy locus $\Mon(B_A)$ contains polynomials $\wt\vp$ that are highly degenerate with respect to the projection $(x,y) \mapsto y$.

\section{Proofs for Problem 2}\label{sec:pb2}

\subsection{One two-dimensional support}\label{sec:12}
As in Section \ref{sec:resultsreducible}, we consider a pair $\emph{A}:=(\emph{A_1}, \emph{A_2})$\index{A@$A$}, $A_j \subset \Z^2$,  where $A_2\subset \{0\}\times\Z$. 
We are interested in the determination of the Galois group $\emph{G_A}$ of the general system of equations supported on $A$
\begin{equation}\label{eq:system2}
p(x,y)=q(y)=0
\end{equation}
where $p\in\CC^{A_1}$ and $q\in\CC^{A_2}$. For now, we restrict ourselves to the case where $A_1$ is not contained in a line. The remaining case is addressed in the Section \ref{sec:reducibletwolines}.

Recall the branching locus $\emph{\sB}\subset \CC^{A}$\index{B@$\sB$}, that is the hypersurface consisting of pairs $(p,q)$ for which the number of solutions to \eqref{eq:system2} in $(\cC)^2$ is not maximal. Loops in the complement of $\sB$ based at a chosen pair $\emph{(p_0,q_0)}$\index{pzero@$(p_0,q_0)$} induce permutations on the set of solutions $\emph{\cN}$\index{Nc@$\cN$} to the system \eqref{eq:system2} corresponding to $(p_0,q_0)$. The latter permutations form the  Galois group $\emph{G_A}\subset \symn$\index{GA@$G_A$} of $A$ that we aim to compute.
 
As explained in Section \ref{sec:resultsreducible}, we will deduce $G_A$ as the image of the braid monodromy group $\emph{B_A}$\index{BA@$B_A$} under the natural projection $\pin:\bn\rightarrow \symn$. In other words, Theorem \ref{thm:reduciblegalois}, which describes $G_A$, will come as a corollary of Theorem \ref{thm:reduciblebraid}, which describes $B_A$.

For convenience, we work under the conditions of Assumption \ref{assumption}. In particular, the polynomial $q(x,y)=q(y)$ is univariate of degree $m$, the polynomial $p(x,y)$ has degree $n$ in $x$ and the system \eqref{eq:system2} has $\emph{N}:=n\cdot m$\index{N@$N$} many solutions for generic $(p,q)$.\smallskip

We decompose the proof of Theorem \ref{thm:reduciblebraid} into the following 3 lemmas, whose proofs are given in subsequent sections.

\begin{lemma}\label{lem:reducibleobstruction}
For $A$ as in Assumption \ref{assumption}, we have $\ind_A(B_A)=B_W$.
\end{lemma}

\begin{lemma}\label{prop:reductionbraidreducible}
Theorem \ref{thm:reduciblebraid} holds if and only if it holds for reduced pairs $A$, that is, for pairs $A$ such that $\left\langle A_1,A_2 \right\rangle=\Z^2$.
\end{lemma}

\begin{lemma}\label{lem:ker}
For $A$ as in Assumption \ref{assumption} and reduced, the group $B_A$ contains $\ker(\ind_A)$.
\end{lemma}

\begin{proof}[Proof of Theorem \ref{thm:reduciblebraid}]
Thanks to Lemma \ref{prop:reductionbraidreducible}, we may assume that $A$ is reduced. By Lemmas \ref{lem:reducibleobstruction} and \ref{lem:ker}, we know that $\ind_A(B_A)=B_W$ and that $\ker(\ind_A)\subset B_A$. It follows that $B_A$ is equal to $\ind_A^{\;\;-1}(B_W)$.
\end{proof}

We postpone the proof of Theorem \ref{thm:reduciblegalois}, which we will derive as a corollary of Theorem \ref{thm:reduciblebraid}, until the end of this section.

\subsubsection{Proof of Lemma \ref{lem:reducibleobstruction}}\label{sec:obstructions}

We aim to prove that $\ind_A(B_A)=B_W$, where $A$ is as in Assumption \ref{assumption}.

Recall from \eqref{e:prod} that the map $\ind_A : \bn \to \bla$ is induced by the multiplication map 
\begin{equation*}
    \begin{array}{rcl}
       \displaystyle \bigcup_{q(y)=0}F_y\times\{y\} & \longmapsto & \displaystyle\bigcup_{q(y)=0} \big(\prod_{x\in F_y}x,y\big)
    \end{array}
\end{equation*}
from $\sU_A\subset\UConf_N(\ttor)$ to $\sU_W(\{1\})\subset\UConf_m(\ttor)$. The composition of the latter multiplication map with $\sC_A\to\sU_A$, $(p,q)\mapsto\{p=q=0\}$ admits a simple description, thanks to Vieta's formula. Indeed, writing 
\begin{equation*}
    p(x,y)\; =\; c_0(y)+c_1(y)x+\cdots+c_n(y)x^n,
\end{equation*}
the product of the roots of $p$ for a chosen $y\in\cC$ is the unique solution to the equation
\begin{equation*}
    w(x,y)=0, \, \text{ where } \emph{w(x,y)}:=(-1)^{n+1}c_0(y)+c_n(y)x.
\end{equation*}\index{w@$w$}
It follows that the composition of the map $\ind_A$ with $\mu_A$ takes values in the braid group $B_W$, corresponding to systems of the form
\begin{equation*}
    w(x,y)=q(y)=0.
\end{equation*}
Moreover, the linear map $\CC^{A}\to\CC^{W}$, which sends the pair $(p,q)$ supported on $A$ to the pair $(w,q)$ supported on $W$,  maps $\sC_A$ to $\sC_{W}$. We claim that this map is surjective and that any loop in $\sC_{W}$ lifts to a loop in $\sC_A$. From this claim, the desired equality $\ind_A(B_A)=B_W$ follows. 

Let us prove the claim. To show surjectivity, observe that any pair $(w,q)\in \sC_{W}$ satisfies $c_n(y)c_0(y)\neq0$ for any root $y\in\cC$ of $q$. For a generic choice of the remaining coefficients $c_a(y)$ of $p(x,y)$, each of the $m$ many polynomials $x\mapsto p(x,y)$, for $q(y)=0$, has $n$ distinct roots in $\cC$. Observe that this remains true even when $\Ax=\{0,n\}$. Thus, the map $\sC_A\to\sC_{W}$ is surjective. Furthermore, the fiber in $\sC_A$ over any point in $\sC_{W}$ is the complement to a hypersurface in an affine subspace of $\C^{A}$. In particular, the fiber is connected. Hence, any loop in $\sC_{W}$ lifts to a loop in $\sC_A$, completing the proof of the claim and the lemma.$\hfill\qed$

\subsubsection{Proof of Lemma \ref{prop:reductionbraidreducible}}\label{sec:reduction}

We aim to prove that Theorem \ref{thm:reduciblebraid} holds if and only if it holds for reduced pairs $A$. Recall that $A$ satisfies Assumption \ref{assumption} in the theorem.

As in Section \ref{sec:resultsreducible}, we denote by $\emph{K}:=K(A)\subset\ttor$ the largest subgroup such that the set $\{p(x,y)=q(y)=0\}$ is invariant under multiplication by $K$, for any $(p,q)\in\CC^{A}$. Pick generators ${a \choose \,b\,}$ and ${0 \choose \,c\,}$ of the lattice $\left\langle A_1,A_2\right\rangle\subset \Z^2$ (any lattice of rank 2 in $\Z^2$ admits such a basis). Then, the kernel of the group homomorphism 
$$\emph{\tau}:\ttor\to\ttor,\; (x,y)\mapsto(x^ay^b,y^c)$$
is exactly $K$. For any pair $(p,q)\in\CC^A$, we can write $p=\tilde p\circ\tau$ and $q=\tilde q\circ\tau$ for some auxiliary Laurent polynomials $\emph{\tilde p}$ and $\emph{\tilde q}$. Denote by $\emph{\wt A}:=(\wt A_1,\wt A_2)$ the support of the pair $(\tilde p,\tilde q)$, for generic $(p,q)\in\CC^A$, and add a tilde to every piece of notation associated to $\wt A$. 

Up to composition of $\tau$ with $(x,y)\mapsto(x^{\pm 1},y^{\pm 1})$, we can assume that $a$ and $c$ are positive, hence that $\wt A$ satisfies Assumption \ref{assumption} for the integers $\emph{\tilde n}:=n(\wt A)$ and $\emph{\tilde m}:=m(\wt A)$. Observe that the support $\wt A$ is reduced. In particular, we have $K(\wt A)=\{1\}$ and $d(\wt A)=1$. 

To prove Lemma \ref{prop:reductionbraidreducible}, we show that Theorem \ref{thm:reduciblebraid} holds for $A$ if and only if it holds for $\wt A$. To prove this, we rely on the existence of appropriate commutative diagrams.

First, it is clear that the map $(\tilde p, \tilde q) \mapsto (\tilde p\circ\tau, \tilde q\circ\tau)$ from $\CC^{\wt A}$ to $\CC^A$ induces an isomorphism $\emph{f}:\sC_{\wt A}\to\sC_A$. Next, the injective map $\UConf_{N/|K|}(\ttor)\to\UConf_{N}(\ttor)$, defined by taking preimages under $\tau$, induces a map $\emph{g}:\sU_{\wt A}\to\sU_A$. Finally, observing that $m=\tilde mc$, the injective map $\UConf_{\tilde m}(\ttor)\to\UConf_{m}(\ttor)$, defined by taking preimages under $(x,y)\mapsto\big((-1)^{n+\tilde n}xy^{\tilde n b},y^c\big)$, induces a map $\emph{h}:\sU_{\wt W}(\{1\})\to\sU_{W}(\{1\})$.

We claim that the maps $f$, $g$, and $h$ are isomorphisms and fit into the following commutative diagram
\begin{equation*}
\xymatrixcolsep{8pc}
\xymatrix{
	\sC_{\wt A}\; \ar[d]_{f} \ar[r]^{(p,q)\mapsto \{p=q=0\}} & \;\sU_{\wt A}\; \ar[d]_{g} \ar[r]	& \;\sU_{\wt W}(\{1\})\; \ar[d]^{h}\\
    \sC_{A}\; \ar[r]^{(\tilde p,\tilde q)\mapsto \{\tilde p=\tilde q=0\}} & \;\sU_{A}\; \ar[r]	& \;\sU_{W}(\{1\})\;
}
\end{equation*}
where the horizontal maps on the right-hand side are the multiplication maps \eqref{e:prod}. Passing to fundamental groups, we obtain the following diagram:
\begin{equation}\label{e:commudiag}
\xymatrixcolsep{3pc}
\xymatrixrowsep{2pc}
\xymatrix{
	\pi_1(\sC_{\wt A})\; \ar[d]_{f_*} \ar[r]^-{\mu_{\wt A}} & \;B_{\wt A}\; \ar[d]_{g_*} \ar[r]^-{\ind_{\wt A}}	& B_{\wt W}\; \ar[d]^{h_*}\\
    \pi_1(\sC_{A})\; \ar[r]^-{\mu_A} & \;B_A\; \ar[r]^-{\ind_{A}}	& \;B_{W}\;
}
\end{equation}
where all vertical maps are isomorphisms. This is because the respective images of $\mu_{\wt A}$ and $\mu_A$ are $B_{\wt A}$ and $B_A$, which are in turn mapped to $B_{\wt W}$ and $B_{W}$, by Lemma \ref{lem:reducibleobstruction}.

Upon these claims and the fact that $\ind_A$ and $\ind_{\wt A}$ are surjective, by Lemma \ref{lem:reducibleobstruction}, we conclude that $B_A=\ind_A^{-1}(B_{W})$ if and only if $B_{\wt A}=\ind_{\wt A}^{-1}(B_{\wt W})$. 

Let us now prove the above claims. The fact that $g$ is an isomorphism is a simple consequence of the definition given in \eqref{e:defUA}. Indeed, it is clear that the preimage of a configuration in $\sC_{\wt A}$ under $\tau$ is in $\sC_A$. Conversely, one easily verifies that the direct image of a configuration in $\sC_A$ lies in $\sC_{\wt A}$. The fact that $h$ is an isomorphism follows from the same principle. There, it may be helpful to decompose the map $(x,y)\mapsto\big((-1)^{n+\tilde n}xy^{\tilde n b},y^c\big)$ as the composition of the bijection $(x,y)\mapsto\big((-1)^{n+\tilde n}xy^{\tilde n b},y\big)$ with the covering $(x,y)\mapsto(x,y^c)$, and observe that $q(y)=\tilde q(y^c)$.

The commutativity of the left square in the first diagram is clear. For the right square, recall that a configuration $\bigcup_{\tilde q(y)=0} \wt F_y\times \{y\}$ in $\sC_{\wt A}$ maps to the configuration $\bigcup_{\tilde q(y)=0}\big(\prod_{x\in\wt F_y}x,y\big)$ in $\sU_{\wt W}(\{1\})$. The image of the former configuration under $g$ is of the form
\[
\bigcup_{\tilde q(y^c)=0} \left\lbrace \xi\sqrt[a]{x/y^b}:\xi^a=1,\; x\in\wt F_{y^c}\right\rbrace \times \{y\}
\]
for any chosen determination of $\sqrt[a]{x/y^b}$. The image under the multiplication map \eqref{e:prod} of this configuration is
\[
\begin{array}{rcl}
\displaystyle\bigcup_{\tilde q(y^c)=0} \left( \prod_{x\in\wt F_{y^c}}\prod_{\xi^a=1}\xi\sqrt[a]{x/y^b}, y\right)    & = &  
\displaystyle\bigcup_{\tilde q(y^c)=0} \left( \prod_{x\in\wt F_{y^c}}(-1)^{a+1}x/y^b, y\right)\\[0.8cm]
& = &  \displaystyle \bigcup_{\tilde q(y^c)=0} \left( \frac{(-1)^{n+\tilde n}}{y^{\tilde n b}}\prod_{x\in\wt F_{y^c}}x, y\right),
\end{array}  
\]
which is the preimage of the point configuration $\bigcup_{\tilde q(y)=0}\big(\prod_{x\in\wt F_y}x,y\big)$ under the map $(x,y)\mapsto\big((-1)^{n+\tilde n}xy^{\tilde n b},y\big)$ . This proves the commutativity of the right square.

The commutativity of the second diagram follows from the commutativity of the first and the fact that $\ind_A$ and $\ind_{\wt A}$ are surjective, by Lemma \ref{lem:reducibleobstruction}. The maps $f_*$, $g_*$, and $h_*$ are isomorphisms, since $f$, $g$, and $h$ are. The result follows.  
\hfill$\qed$\medskip

During the proof, we showed that $g_*: B_{\wt A} \to B_A$ is an isomorphism, without using that $\ker(\tau) = K$. This leads to the following, more general statement.

\begin{lemma}\label{lem:iso}
Let $\wt A$ and $A$ be two finite subsets of $\Z^2$ that satisfy Assumption \ref{assumption}, and suppose that $A = L(\wt A)$ for some injective homomorphism $L:\Z^2 \to \Z^2$. Denote the dual map by $\tau:\ttor \to \ttor$. Then, the map 
\[
\{\tilde p = \tilde q = 0\} \in \sC_{\wt A} \longmapsto \{\tilde p \circ \tau = \tilde q \circ \tau = 0\} \in \sC_A
\]
induces an isomorphism $B_{\wt A} \to B_A$.
$\hfill\qed$
\end{lemma}

\subsubsection{Proof of Lemma \ref{lem:ker}} 
To prove that $B_A$ contains the kernel of $\ind_A$ for reduced pairs $A$, we study the geometry of the branching locus $\sB$ of the covering \eqref{e:projA}. For $\hat y \in \cC$, define 
\begin{equation*}
\emph{\sB_{\hat y}}\index{By@$\sB_{\hat y}$}:=\{p\in\CC^{A_1}: p(x,\hat y) \text{ is singular, with } n \text{ roots in } \cC \text{ counted with multiplicities}\}.
\end{equation*}
The set $\sB_{\hat y}$ is the pullback under the linear map $\emph{L_{\hat{y}}}:\C^{A_1}\rightarrow \C^{\Ax}$, $p(x,y)\mapsto p(x,\hat{y})$, of the $\Ax$-discriminant in $\C^{\Ax}$. Provided that $n>1$, the latter discriminant is irreducible and non-empty (see \cite[Chapter 10]{GKZ}).  Therefore, the set $\sB_{\hat y}$ is an irreducible non-empty hypersurface of $\CC^{A_1}$.

\begin{lemma}\label{lem:that}
For $A_1$ as in Assumption \ref{assumption} and distinct $\hat y,\check y \in \cC$, the intersection $\sB_{\hat y}\cap \sB_{\check y}$ has codimension 1 in both $\sB_{\hat y}$ and $\sB_{\check y}$ unless $\hat y/\check y$ is a $k^{\text{th}}$ root of unity where $k$ is the index of the lattice $\left\langle A_1\right\rangle$ in $\Z^2$.
\end{lemma}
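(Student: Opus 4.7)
The plan is to exploit the natural $\C^*$-action on $\C^{A_1}$ by rescaling the parameter $t$. For $\lambda\in\C^*$, let $\phi_\lambda$ be the automorphism of $\C^{A_1}$ sending $p(x,t)$ to $p(x,\lambda t)$, which on coefficients reads $c_{(a,b)}\mapsto\lambda^b c_{(a,b)}$. A direct computation gives $L_{\lambda\hat t}=L_{\hat t}\circ\phi_\lambda$, hence $\sB^{\lambda\hat t}=\phi_\lambda^{-1}(\sB^{\hat t})$. Setting $\lambda:=\check t/\hat t$, the equality $\sB^{\hat t}=\sB^{\check t}$ becomes the $\phi_\lambda$-invariance of the irreducible hypersurface $\sB^{\hat t}$.

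Since $\sB^{\hat t}$ and $\sB^{\check t}$ are irreducible hypersurfaces in $\C^{A_1}$, they either coincide or intersect properly; in the latter case the intersection is automatically of pure codimension one in each. The lemma therefore reduces to showing that $\sB^{\hat t}=\sB^{\check t}$ if and only if $\hat t/\check t$ is a $k$-th root of unity. One direction is immediate: if $(\check t/\hat t)^k=1$, then $(\check t/\hat t)^b=1$ for every $(a,b)\in A_1$ (since $b\in k\Z$), so $\phi_{\check t/\hat t}$ is the identity on $\C^{A_1}$ and the invariance is trivial.

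For the converse, denote by $F:=D_{A_x}\circ L_{\hat t}$ the irreducible polynomial on $\C^{A_1}$ cutting out $\sB^{\hat t}$, where $D_{A_x}$ is the defining polynomial of the $A_x$-discriminant. The $\phi_\lambda$-invariance of $\sB^{\hat t}$ is equivalent to $F\circ\phi_\lambda=\mu F$ for some $\mu\in\C^*$, i.e.\ to $F$ being a $\phi_\lambda$-eigenvector. Since $\phi_\lambda$ scales a monomial $\prod_i c_{(a_i,b_i)}^{n_i}$ by $\lambda^{\sum_i n_i b_i}$, this forces the $b$-weights $\sum_i n_i b_i$ of all monomials of $F$ to lie in a single residue class modulo the multiplicative order of $\lambda$. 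The core of the argument, and the main technical obstacle, is to show that the pairwise differences of these weights generate the subgroup $k\Z\subset\Z$; once this is established, a common residue forces the order of $\lambda$ to divide $k$, whence $\lambda$ is a $k$-th root of unity.

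To produce the required weight differences, I would expand $F$ by substituting $L_{\hat t}(p)_a=\sum_{b:(a,b)\in A_1}c_{(a,b)}\hat t^b$ into the monomials of $D_{A_x}$ and exhibit two resulting monomials in the $c_{(a,b)}$ whose $b$-weights differ by exactly $k$. A convenient source is the Horn uniformization of the $A_x$-discriminant, or equivalently the description of the Newton polytope of $D_{A_x}$ via coherent triangulations of $A_x$ as in \cite{GKZ}: the vertex monomials of $D_{A_x}$ are explicit, and their pullbacks by $L_{\hat t}$ carry readable $b$-weights. The hypothesis that $A_1$ is not contained in a line ensures that two such pullback monomials realizing the weight difference $k$ can always be produced, which concludes the argument.
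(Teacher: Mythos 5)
Your reduction is correct and is essentially a cleaner repackaging of the first half of the paper's argument: irreducibility of the two hypersurfaces reduces the lemma to deciding when $\sB^{\hat t}=\sB^{\check t}$, and the identity $L_{\lambda\hat t}=L_{\hat t}\circ\phi_\lambda$ turns this into the eigenvector condition $F\circ\phi_\lambda=\mu F$, i.e.\ into the condition that all monomials of $F$ have congruent $b$-weights modulo the multiplicative order of $\lambda$. (The paper instead argues that the ratio vector $(c_a(\hat t)/c_a(\check t))_{a\in A_x}$ must be a quasi-homogeneity $(cd^a)_{a\in A_x}$ of the $A_x$-discriminant and then tries to force $c=d=1$.) The problem is that the entire content of the lemma now sits in the one step you do not carry out: that the pairwise differences of the $b$-weights of the monomials of $F$ generate exactly $k\Z$. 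The containment of these differences in $k\Z$ is the trivial direction; producing a difference equal to $k$ is the whole point, and the appeal to the Horn uniformization together with ``$A_1$ is not contained in a line'' is an assertion, not an argument.

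Moreover, the assertion is false, so the gap cannot be closed as stated. Take $A_1=\{(0,0),(1,1),(2,0)\}$, which satisfies \eqref{eq:hypothese}, is not contained in a line, and has $k=1$. Here $D_{A_x}=c_1^2-4c_0c_2$ and $F=\hat t^{\,2}\,c_{(1,1)}^2-4\,c_{(0,0)}c_{(2,0)}$, whose two monomials have $b$-weights $2$ and $0$: the weight differences generate $2\Z$, not $k\Z=\Z$. Correspondingly $\sB^{\hat t}=\sB^{-\hat t}$, although $\hat t/(-\hat t)=-1$ is not a first root of unity. Your framework in fact computes the correct answer --- the exceptional ratios are the $w$-th roots of unity, where $w\Z$ is the subgroup generated by the $b$-weight differences of the monomials of $F$, and $w$ is a multiple of $k$ that can be proper --- but that answer contradicts the statement being proved. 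So the missing step is not a technicality: it is exactly where your proof, the lemma itself, and the paper's own argument (which concludes $d=1$ even though $d=-1$ in this example) all break down.
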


\begin{proof}
Since $\sB_{\hat y}$ and $\sB_{\check y}$ are irreducible, either they coincide or $\sB_{\hat y}\cap \sB_{\check y}$ has codimension 1 in both $\sB_{\hat y}$ and $\sB_{\check y}$. Assume that $\sB_{\hat y}$ 
and $\sB_{\check y}$ coincide. Take $B\subset A_1$ so that the projection $\Z^2\to\Z\times\{0\}$ induces a bijection between $B$ and $\Ax$. In particular, each of the evaluation maps $ev_{\hat y}:(x,y)\to(x,\hat 
y)$ and $ev_{\check y}:(x,y)\to(x,\check y)$ realises an isomorphism from $\C^B$ to $\C^{\Ax}$. Since $\sB_{\hat y}\cap \C^B =\sB_{\check y}\cap \C^B$, it follows that $ev_{\check y}\circ 
(ev_{\hat y})^{-1}:\C^{\Ax}\to\C^{\Ax}$ preserves the $\Ax$-discriminant. In other words, the $\Ax$-discriminant is invariant under translation by $\big((\hat y/\check y)^{b(a)}\big)_{a\in\Ax}$, where we denote by $(a,\emph{b(a)})\in B$ the point above $a\in \Ax$. 

By the Horn-Kapranov uniformisation, the vector $\big((\hat y/\check y)^{b(a)}\big)_{a\in\Ax}$ has to be of the form  $(u v^a)_{a\in\Ax}$ for some $u, v \in \cC$. Otherwise, the reduced $\Ax$-discriminant would still be invariant under translation by $\big((\hat y/\check y)^{b(a)}\big)_{a\in\Ax}$, contradicting the fact that the logarithmic Gauss map is birational (see \cite[Theorem 3.1]{GKZ}).

We deduce from the above argument that $\vert\hat y/\check y\vert^{b(a)}=\vert u v^a\vert$. Setting $\emph{\omega}:=\hat y/\check y$ and taking the logarithm of both sides, we obtain that either $\vert \omega\vert=1$, or 
$$b(a)=\frac{\log\vert v\vert}{\log\vert \omega\vert}\cdot a +\frac{\log\vert u\vert}{\log\vert \omega\vert}$$
for all $a\in \Ax$. 
In particular, the subset $B \subset A$ must lie on a line in $\mathbb{Z}^2$ if $\omega \neq 1$. However, since $A_1$ is not contained in a line, we can choose $B$ not contained in a line and thereby conclude that $|\omega| = 1$.

Now, for any pair $a_1,\, a_2\in \Ax$, and 
\[\emph{D}:=D(a_1,a_2)=\det \begin{pmatrix}a_1-0 & a_2-0\\b(a_1)-b(0) & b(a_2)-b(0)  
\end{pmatrix},\]
a straightforward computation shows that $\omega^D=1$, recalling that $\omega^{b(a)}=uv^a$.
By considering all subsets $B\subset A_1$ whose projection onto $\Ax$ is bijective, we deduce that $\omega^k=1$, where $$\emph{k}:=\gcd(\{D(a_1,a_2)\}_{a_1,\,a_2\in\Ax}).$$ Finally, we observe that $k$ is the index of the lattice $\left\langle A_1\right\rangle$ in $\Z^2$.
\end{proof}

The proof of Lemma \ref{lem:ker} also relies on a stronger form of \cite[Theorem 3]{EL}, which is already established there. Recall that for a support $\uA\subset \Z$ such that $\{0,n\}\subset \uA \subset \{0,\ldots,n\}$, we can consider the braid monodromy map
\[\mu_{\uA}^\star:\pi_1(\sC_{\uA})\rightarrow \bn\]
where $\sC_{\uA}\subset\C^{\uA}$ is the set of polynomials $p(x)=\Sigma_{a\in \uA}c_a x^a$ with $n$ distinct roots in $\cC$, and $\bn$ is denoted $B_n^\star$ in \cite{EL}. We have the following.

\begin{theor}\label{thm:EL}
For any reduced support $\uA\subset \Z$ as above and constants $k_0, \, k_n\in\cC$, the composition of $\pi_1(\sC_{\uA}\cap \{c_0=k_0, \,c_n=k_n\})
\rightarrow\pi_1(\sC_{\uA})$ with $\mu_{\uA}^\star$ maps surjectively onto the subgroup $R_{\sN}^0\subset 
\bn$.
In particular, the image under $\mu_{\uA}^\star$ of the kernel of the map
\[ \pi_1(\sC_{\uA}\setminus\{c_0c_n=0\})\;\longrightarrow\;\pi_1(\C^{\uA}\setminus\{c_0c_n=0\})\]
induced by the inclusion $\sC_{\uA}\setminus\{c_0c_n=0\}\hookrightarrow\C^{\uA}\setminus \{c_0c_n=0\}$ is $R_{\sN}^0$.
\end{theor}

\begin{proof}
Let us first assume that $k_0=k_1=1$. Recall from Lemma \ref{lem:kernel} that $R_{\sN}^0$ is the subgroup of $\bn$ generated by $b_1,\ldots,b_n$. Therefore, the proof of Theorem \ref{thm:EL} is identical to the proof of \cite[Theorem 3]{EL}, provided that we upgrade \cite[Corollary 5]{EL} accordingly. Using the notations in loc. cit., we need to show that the composition of $$\pi_1(\mathcal{C}_A\cap\{c_0=c_2=1\})\to\pi_1(\mathcal{C}_A)$$ with $\mu_A^\star$ maps onto $R_{\sN}^0$. This upgrade is obviously true, by \cite[Lemma 3.5]{EL}. This proves the result for $k_0=k_1=1$.

For arbitrary constants $k_0$ and $k_n$, observe that the image of $\pi_1(\sC_{\uA}\cap \{c_0=c_n=1\})$ under $\mu_{\uA}^\star$ is not affected by the coordinate change $p(x)\mapsto u\cdot p(vx)$. Choosing $u$ and $v$ appropriately, we can achieve any prescribed value $k_0$ and $k_n$.
\end{proof}

We are now ready to prove Lemma \ref{lem:ker}. To this end, consider the affine subspace $\emph{V}\subset\C^{A}$ consisting of all pairs $(p,q)$ such that $q=q_0$ and the coefficients $c_0(y)$ and $c_n(y)$ in the expansion
\begin{equation*}
    p(x,y) = c_0(y)+c_1(y)x+\cdots+c_n(y)x^n
\end{equation*}
are constant and equal to the corresponding coefficients of the fixed polynomial $p_0$. In particular, we have $(p_0,q_0)\in V$. By Vieta's formula, the composition of the natural map $\pi_1(V\setminus\sB)\to\pi_1(\sC_A)$ with $\mu_A$ takes values in $\ker(\ind_A)$. Our goal is to show that the image of this composition is $\ker(\ind_A)$.

To do so, observe that $\ker(\ind_A)$ is a product. Indeed, the map $\bn\to\symq$, which records the permutation induced on the set $\emph{\cQ}$\index{Q@$\cQ$} of roots of $q_0$, factors through $\ind_A$. This implies that $\ker(\ind_A)$ is a subgroup of the kernel of $\bn\to\symq$. Moreover, the restriction of $\ind_A$ to this kernel is the product map $\prod_{y\in\cQ}\ind_{\cN_{y}}$, where $\emph{\cN_y}:=\{x\in\cC:p(x,y)=0\}$ and each map $\emph{\ind_{\cN_y}}$ is defined as in \eqref{eq:indN}. It follows that $\ker(\ind_A)=\prod_{y\in\cQ}\ker(\ind_{\cN_y})=\prod_{y\in\cQ} R^0_{\cN_y}$.

To show that $\pi_1(V\setminus\sB)$ surjects onto $\ker(\ind_A)$, we invoke Zariski's Theorem \cite[Théorème]{Ch}, which states that for a generic line $L\subset V$ passing through $(p_0,q_0)$, the images of $\pi_1(L\setminus\sB)$ and $\pi_1(V\setminus\sB)$ in $\bn$ coincide. We claim that for any pair $\hat y, \, \check y \in \cQ$, the intersection $\sB_{\hat y}\cap \sB_{\check y}$ has codimension 1 in both $\sB_{\hat y}$ and $\sB_{\check y}$. This ensures that the finite sets $L\cap \sB_{\hat y}$, for $\hat y \in \cQ$, are pairwise disjoint. It then follows from Theorem \ref{thm:EL} that the image of $\pi_1(L\setminus\sB)$ contains $\prod_{y\in\cQ}R^0_{\cN_y}$, and hence $\ker(\ind_A)$.

It remains to prove the claim. By Lemma \ref{lem:that}, the claim holds unless $\hat y/\check y$ is a $k^{\text{th}}$ root of unity, where $k$ is the index of the lattice $\left\langle A_1\right\rangle$. Since $q_0\in\C^{A_2}$ is generic, the ratio $\hat y/\check y$ can only be an $l^{\text{th}}$ root of unity, where $l$ is the index of $\left\langle A_2\right\rangle$. Pick generators ${a \choose \,b\,}$ and ${0 \choose \,c\,}$ of $A_1$. 
Since the pair $A$ is reduced and $A_2=\{0\}\times\ell\Z$, we deduce successively that $a=1$, that $c=k$ and eventually that $k$ and $l$ are coprime. This proves the claim and thus the lemma.
$\hfill\square$

\begin{proof}[Proof of Theorem \ref{thm:reduciblegalois}] 
The map $\tau:\ttor \to \ttor$, defined by $(x,y) \mapsto \big(x^d(-1)^{n+\frac{n}{d}}, y\big)$, satisfies
\[
w(\tau(x,y)) = (-1)^{n+1}c_0(y) + c_n(y)x^d(-1)^{n+\frac{n}{d}} = v(x,y)(-1)^{n+\frac{n}{d}}.
\]
Consequently, the preimage of $\{v(x,y) = q(y) = 0\}$ under $\tau$ is $\{w(x,y) = q(y) = 0\}$.
By Lemma \ref{lem:iso}, the map $(v,q) \mapsto (w,q)$ induces an isomorphism $B_V \simeq B_W$.
This isomorphism fits into the following commutative diagram:
\begin{equation*}
    \xymatrixcolsep{3pc}
    \xymatrixrowsep{2pc}
    \xymatrix{
        B_A \ar[d]_{\ind_A} \ar@{->>}[r]^{\pin} & G_A \ar[d]^{\inds} \\
        B_W \simeq B_V \ar@{->>}[r]^{\pi_{\scalebox{0.6}{$\cM$}}} & G_V
    } \quad .
\end{equation*}
Since $\ind_A$ is surjective by Lemma \ref{lem:constraint}, it follows that $\inds$ is also surjective.
Thus, it remains to show that the projection $\pin: B_A \to G_A$ maps 
\[
\ker(\ind_A) = \prod_{y \in \cQ} \ker(\ind_{\cN_y})
\]
surjectively onto 
\[
\ker(\inds) = \prod_{y \in \cQ} \ker(\underline{\ind}_{\cN_y}).
\]
In the proof of Theorem \ref{thm:maingalois}, we showed that $\ker(\ind_{\cN_{\hat{y}}})$ maps surjectively onto $\ker(\underline{\ind}_{\cN_{\hat{y}}})$.
The result follows.
\end{proof}

\subsection{Two one-dimensional supports}\label{sec:reducibletwolines}

We now consider the same problem as in the previous section, in the special case where there exist two skew lines $L_1$ and $L_2 $ such that $A_i\subset L_i$ for $i\in\{1,2\}$. Without loss of generality, we assume that both $A_1$ and $A_2$ contain $(0,0)$ as an endpoint. 

Fix generators $(a,b)$ and $(u,v)$ for the lattices $\left\langle L_1\cap\Z^2\right\rangle$ and $\left\langle L_2\cap\Z^2\right\rangle$ respectively. Then, the covering $\emph{\tau}:\ttor\to\ttor$, $(x,y)\mapsto(x^ay^b,x^uy^v)$ maps the set of solutions to the system
\begin{equation}\label{e:system3}
p(x,y)=q(x,y)=0
\end{equation}
supported on $A$ to the set of solutions to a system
\begin{equation*}
\tilde p(x)=\tilde q(y)=0
\end{equation*}
where $\tilde p(x)$ and $\tilde q(y)$ are univariate polynomials. 
To describe $G_A$, consider the auxiliary systems
\begin{align}
    p(x,y)=0, \quad x^uy^v=1,\label{e:systems1}\\
    x^ay^b=1, \quad q(x,y)=0. \label{e:systems2}
\end{align}
Each solution to \eqref{e:system3} is a product in $\ttor$ of a solution to \eqref{e:systems1} with a solution to \eqref{e:systems2}, and vice versa. Indeed, the respective solution sets are  
\[
\bigcup_{\tilde p(x)=\tilde q(y)=0} \tau^{-1}(x,y)\, , \quad \bigcup_{\tilde p(x)=0} \tau^{-1}(x,1)\; \text{ and } \; \bigcup_{\tilde q(y)=0} \tau^{-1}(1,y)\,.
\]
Thus, the multiplication map $\ttor\times\ttor\to\ttor$ induces a surjection from the covering
\begin{equation}\label{e:covprod}
\bigsqcup_{(p,q)\in\sC_A}
\left\lbrace\begin{array}{l}p(x,y)=0,\\x^uy^v=1\end{array}\right\rbrace
\times
\left\lbrace\begin{array}{l}x^ay^b=1,\\q(x,y)=0\end{array}\right\rbrace
\subset \ttor\times\ttor\times\sC_A \longrightarrow \sC_A
\end{equation}
to the covering
\begin{equation}\label{e:covGA}
\bigsqcup_{(p,q)\in\sC_A}
\left\lbrace\begin{array}{l}p(x,y)=0,\\q(x,y)=0\end{array}\right\rbrace
\subset \ttor\times\sC_A \longrightarrow \sC_A\;.
\end{equation}
Let $\emph{G_1}$ and $\emph{G_2}$ be the respective monodromy groups of the coverings
\[ 
\bigsqcup_{p\in\sC_{A_1}}
\left\lbrace\begin{array}{l}p(x,y)=0,\\x^uy^v=1\end{array}\right\rbrace
\longrightarrow \sC_{A_1}
\quad\text{and}\quad
\bigsqcup_{q\in\sC_{A_2}}
\left\lbrace\begin{array}{l}x^ay^b=1,\\q(x,y)=0\end{array}\right\rbrace
\longrightarrow \sC_{A_2}\;.
\]
The map from \eqref{e:covprod} to \eqref{e:covGA} induces, at the level of monodromy groups, a surjective homomorphism $G_1\times G_2 \to G_A$, which we will use to describe $G_A$. We first describe the homomorphism, its kernel, and eventually the factors $G_1$ and $G_2$.

The map $G_1\times G_2 \to G_A$ admits the following description. Let $\emph{\cN}$, $\emph{\cN_1}$ and $\emph{\cN_2}$ denote the sets of solutions to \eqref{e:system3}, \eqref{e:systems1} and \eqref{e:systems2}, respectively. Each of these sets is acted upon by the group  $\emph{K}:=\ker(\tau)$\index{K@$K$}. We denote by $\emph{\sym_{\scalebox{0.6}{$\cN_1,K$}}}$, $\emph{\sym_{\scalebox{0.6}{$\cN_2,K$}}}$ and $\emph{\sym_{\scalebox{0.6}{$\cN,K$}}}$ the corresponding $K$-equivariant permutation groups. Since each element $(x,y)\in\cN$ can be expressed as a product $(x_1,y_2)\cdot(x_2,y_2)$ with $(x_i,y_i)\in\cN_i$, we have a map
\begin{equation}\label{e:prodmap}
\begin{array}{rcl}
\sym_{\scalebox{0.6}{$\cN_1,K$}}\times\sym_{\scalebox{0.6}{$\cN_2,K$}} & \longrightarrow & \sym_{\scalebox{0.6}{$\cN,K$}}\\[0,2cm]
(\sigma_1,\sigma_2)& \longmapsto & \left( (x,y)\mapsto \sigma_1(x_1,y_1)\cdot\sigma_2(x_2,y_2)  \right)
\end{array}\;.
\end{equation}
The $K$-equivariance of $\sigma_1$ and $\sigma_2$ ensures that the above map is well-defined. 

The monodromy groups $G_A$, $G_1$ and $G_2$ are $K$-equivariant, hence included in $\sym_{\scalebox{0.6}{$\cN_1,K$}}$, $\sym_{\scalebox{0.6}{$\cN_2,K$}}$ and $\sym_{\scalebox{0.6}{$\cN,K$}}$, respectively. The map $G_1\times G_2 \to G_A$ is the restriction of \eqref{e:prodmap}.

Clearly, the kernel of \eqref{e:prodmap} is a subgroup of $\sym_{\scalebox{0.6}{$\cN_1,K$}}\times\sym_{\scalebox{0.6}{$\cN_2,K$}}$ isomorphic to $K$, namely the image of the injection
\[  \xi \longmapsto \left((s_1,s_2)\in\cN_1\times\cN_2\mapsto (\xi\cdot s_1,\xi^{-1}\cdot s_2)\right).\]

\begin{theor}\label{t:reducible2lines}
Let $A:=(A_1,A_2)$ be any pair of finite subsets $A_i\subset \Z^2$ such that there exist skew lines $L_1$ and $L_2$ with $A_i\subset L_i$ for $i\in\{1,2\}$. Then, the product $G_1\times G_2$ contains the kernel of \eqref{e:prodmap} and $G_A$ fits into the short exact sequence
\begin{equation}\label{e:ses}
    1\longrightarrow  K\longrightarrow G_1\times G_2\longrightarrow G_A\longrightarrow 1.
\end{equation}
\end{theor}

\begin{rem} If the covering $\tau$ has degree $1$, then it is a coordinate change that transforms the system $p(x,y)=q(x,y)=0$ into the system $\tilde p(x)=\tilde q(y)=0$. The exact sequence \eqref{e:ses} then reduces to an isomorphism $G_1\times G_2\to G_A$, where $G_1$ and $G_2$ are the Galois groups of the univariate polynomials $\tilde p$ and $\tilde q$, respectively. These groups are determined in \cite{EL}.
\end{rem}

The explicit map \eqref{e:prodmap} and the above exact sequence provides a description of $G_A$, given a description of the groups $G_1$ and $G_2$, which we now provide. 
Define $\emph{d_i}:=\gcd(A_i)$, so that 
\[\tilde p(x)=\check p(x^{d_1}) \;\text{ and }\; \tilde q(y)=\check q(y^{d_2})\]
for some polynomials $\check p$ and $\check q$ with reduced supports. 
The set $\{p(x,y)=0,x^uy^v=1\}$ is the preimage of $\{\check{p}(x)=0,y=1\}$ under the covering 
\[\emph{\tau_1}:(x,y)\mapsto\big((x^ay^b)^{d_1},x^uy^v\big).\] 
In particular, the set $\cN_1$ is acted upon by $\emph{K_1}:=\ker(\tau_1)\simeq K\times U_{d_1}$ and $G_1$ is a subgroup of the $K_1$-equivariant permutations in $\sym_{\scalebox{0.6}{$\cN_1$}}$. The same reasoning applies to $G_2$. Denote by $\emph{\sym_{\scalebox{0.6}{$\cN_i,K_i$}}}\subset \sym_{\scalebox{0.6}{$\cN_i$}}$ the subgroup of $K_i$-equivariant permutations.

\begin{lemma}\label{l:gi}
For $i\in\{1,2\}$, the group $G_i$ equals the subgroup $\sym_{\scalebox{0.6}{$\cN_i,K_i$}}\subset \sym_{\scalebox{0.6}{$\cN_i$}}$.
\end{lemma}

\begin{proof}
We prove the lemma for $i=1$, the remaining case being similar. The subgroup of $K_1$-equivariant permutations in $\sym_{\scalebox{0.6}{$\cN_1$}}$ is isomorphic to the wreath product $K_1 \ \wr_{\scalebox{0.6}{$\check\cN_1$}}\ \sym_{\scalebox{0.6}{$\check\cN_1$}}$, where $\emph{\check\cN_1}$ is the set of roots of $\check p$. By $K_1$-equivariance, we have a surjective map $\sym_{\scalebox{0.6}{$\cN_1,K_1$}}\to\sym_{\scalebox{0.6}{$\check\cN_1$}}$ that records the induced permutation on the set $\check\cN_1$. To prove the statement, it suffices to show that the restriction of this map to $G_1$ is still surjective, and that $G_1$ contains its kernel, which is canonically isomorphic to $K_1^{\;\check\cN_1}$.

From \cite[Theorem 1]{EL}, we know two things. First, the Galois group of $\check p$ is the full symmetric group. This implies that $G_1$ surjects onto $\sym_{\scalebox{0.6}{$\check\cN_1$}}$. Second, for any tuple $\emph{t}$ in $\pi_1(\cC)^{\check{\cN}_1}$, we can find a loop $\emph{\ell}$ in the space of non-singular polynomials $\check p$ such that each root in $\check{\cN}_1$ traces out a loop whose class in  $\pi_1(\cC)\simeq \Z$ is prescribed by the corresponding component of the latter tuple. In particular, the image of $\ell$ in $G_1$ belongs to the kernel of $\sym_{\scalebox{0.6}{$\cN_1,K_1$}}\to\sym_{\scalebox{0.6}{$\check\cN_1$}}$. The image of $\ell$ in $K_1^{\;\check\cN_1}$ is the image of the tuple $t$ under the coordinate-wise monodromy map of the covering $\tau_1$ restricted to $\tau_1^{\;-1}(\cC\times\{1\})$. Since the monodromy group of the latter covering is $K_1$ and since $t\in\pi_1(\cC)^{\check{\cN}_1}$ can be chosen arbitrarily, we conclude that $G_1$ contains the kernel $K_1^{\;\check\cN_1}$ of $\sym_{\scalebox{0.6}{$\cN_1,K_1$}}\to\sym_{\scalebox{0.6}{$\check\cN_1$}}$.
\end{proof}

\begin{proof}[Proof of Theorem \ref{t:reducible2lines}]
A consequence of the lemma is that, for any $\xi\in K$, the group $G_i$ contains the permutation $s_i\mapsto\xi\cdot s_i$. It follows that the product $G_1\times G_2$ contains the kernel 
of \eqref{e:prodmap}, proving in turn the existence of the short exact sequence \eqref{e:ses}.
\end{proof} 

In certain situations, it is possible to find an explicit subgroup of $G_1\times G_2$ that maps bijectively to $G_A$. To see this, define the map 
\begin{equation*}
\begin{array}{rcl}
    \emph{\underline{\ind}_i}\; :\; \sym_{\scalebox{0.6}{$\cN_i,K_i$}} &\longrightarrow & K  \\
     \sigma & \longmapsto &  \prod_{s\in E}\frac{\sigma(s)}{s}
\end{array}
\end{equation*}
where $E\subset \cN_i$ is any subset that intersects each $K$-orbit in $\cN_i$ exactly once. The $K$-equivariance of $\sigma$ implies that $\underline{\ind}_i(\sigma)$ does not depend on the choice of $E$. To see that $\underline{\ind}_i(\sigma)$ actually takes values in $K$, observe that, in the case $i=1$, 
\[ \tau\left(\prod_{s\in E} s\right) = \prod_{s\in E} \tau(s) = \prod_{\tilde p(x)=0} (x,1) = \prod_{s\in \sigma(E)} \tau(s) = \tau\left(\prod_{s\in E} \sigma(s)\right).\]

\begin{utver}\label{p:GA}
The Galois group $G_A$ is isomorphic to $\ker(\underline{\ind}_1)\times G_2$ if $|K|$ is coprime to $\deg(\tilde p)$, and to  $G_1\times\ker( \underline{\ind}_2)$ if $|K|$ is coprime to $\deg(\tilde q)$.
\end{utver}

\begin{proof}
We prove the statement for $i=1$, the remaining case being similar. In view of the exact sequence \eqref{e:ses}, it suffices to show that the subgroup $\ker(\underline{\ind}_1)$ has index $\vert K\vert$ in $G_i$ and that $\ker(\underline{\ind}_1)\times G_2$ intersects the kernel of \eqref{e:prodmap} trivially.

The group $G_1$ is isomorphic to $K\ \wr_{\scalebox{0.6}{$\tilde\cN_1$}}\ \sym_{\scalebox{0.6}{$\tilde\cN_1, d_1$}}$, where $\tilde\cN_1$ is the set $\{\tilde p(x)=0\}$ and $\sym_{\scalebox{0.6}{$\tilde\cN_1, d_1$}}\subset \sym_{\scalebox{0.6}{$\tilde\cN_1$}}$ is the $U_{d_1}$-equivariant subgroup (recall that $\tilde p(x)=\check p(x^{d_1})$).
Via this isomorphism, the map $\underline{\ind}_1$ reads as
\[\big((\xi_s)_{s\in\tilde\cN_1}, \sigma\big)\longmapsto \prod_{s\in\tilde\cN_1}\xi_s.\]
In particular, it is surjective. Therefore, the subgroup $\ker(\underline{\ind}_1)$ has index $\vert K\vert$ in $G_1$.

The projection of the kernel of \eqref{e:prodmap} onto the first factor of $G_1\times G_2$ consists of elements of the form $\big((\xi)_{s\in\tilde\cN_1},\id_{\scalebox{0.6}{$\tilde\cN_1$}}\big) \in K \ \wr_{\scalebox{0.6}{$\tilde\cN_1$}} \ \sym_{\scalebox{0.6}{$\tilde\cN_1, d_1$}}$, for any $\xi\in K$. The image of such an element under $\underline{\ind}_1$ is $\xi^{|\tilde\cN_1|}=\xi^{\deg(\tilde p)}$. We conclude that that $\ker(\underline{\ind}_1)\times G_2$ intersects the kernel of \eqref{e:prodmap} trivially if $|K|$ and $\deg(\tilde p)$ are coprime.
\end{proof}

\subsection{The groups $B_W$ and $G_V$}

The description of the braid monodromy group $B_A$ in Theorem \ref{thm:reduciblebraid} depends on the braid monodromy group $B_W$ associated with the auxiliary system $w(x,y) = q(y) = 0$, supported on $W$. Similarly, the description of the Galois group $G_A$ in Theorem \ref{thm:reduciblegalois} depends on the Galois group $G_V$ associated with the auxiliary system $v(x,y) = q(y) = 0$, supported on $V$. In order to provide a complete description of $B_A$ and $G_A$, we now describe $B_W$ and $G_V$.

\begin{theor}\label{thm:bwgv}
Let $A := (A_1, A_2)$ be a pair such that $A_2$ lies on a line and $A_1$ does not. 

If $A_1$ is not sharp, then the braid group $B_W$ equals $\bl$ and the Galois group $G_V$ equals $\symmv$.

If $A_1$ is sharp, then $B_W$ is a subgroup of $\bl$ isomorphic to $\pi_1(\cC) \times B_{A_2}$, and $G_V$ is described in Theorem \ref{t:reducible2lines}.
\end{theor}

\begin{proof}
Assume that $A_1$ is not sharp. 

We begin with the computation of the group $B_W$. The projection $\ttor \to \cC$ onto the second factor induces a map $\sU_W \to \UConf_m(\cC)$, whose induced map on fundamental groups is a surjective morphism $\bl \to B_{A_2}$ onto the braid monodromy group of the univariate polynomial $q$ supported on $A_2$ (see \cite{EL}). The restriction of this map to $B_W$ is induced by the projection of the covering
\[
\begin{array}{rcl}
\bigsqcup_{(w,q)\in\sC_W} \{ w(x,y) = q(y) = 0 \} &\longrightarrow& \sC_W \\[0.2cm]
(x,y,w,q) &\longmapsto& (w,q)
\end{array}
\]
onto the covering
\[
\begin{array}{rcl}
\bigsqcup_{q\in\sC_{A_2}} \{ q(y) = 0 \} &\longrightarrow& \sC_{A_2} \\[0.2cm]
(y,q) &\longmapsto& q
\end{array}.
\]
In particular, the restriction of $\bl \to B_{A_2}$ to $B_W$ remains surjective. To prove the equality $B_W = \bl$, it suffices to show that $B_W$ contains the kernel of the map $\bl \to B_{A_2}$.

Let us describe this kernel. First, recall that the group $K(W)$ — the largest subgroup of $\ttor$ that leaves $\{ w(x,y) = q(y) = 0 \}$ invariant under multiplication — is of the form $\{1\} \times U_e$ for some positive integer $e$. This integer $e$ is the largest such that
\[
c_0(y) = \tilde c_0(y^e), \quad c_n(y) = \tilde c_n(y^e)\cdot y^\vr, \quad \text{and} \quad q(y) = \tilde q(y^e)
\]
for some Laurent polynomials $\tilde c_0$, $\tilde c_n$, $\tilde q$, and some integer $\vr$. In particular, the group $U_e$ acts on the set $\cQ$ of roots of $q_0$. The kernel of $\bl \to B_{A_2}$ is thus canonically isomorphic to the subgroup of $\pi_1(\ttor)^\cQ$ that is invariant under the action of $U_e$ by permutation of the factors.

To show that $B_W$ contains this kernel, we consider loops $\ell: t \mapsto (w_t, q_t)$ such that $q_t$ is constant and equal to $q_0$. This ensures that the image of $\ell$ in $B_W$ lies in the kernel of $\bl \to B_{A_2}$.

Let $\{ y_1, \ldots, y_m \}$ denote the set of roots of $q_0$. Denote by $C_0 \subset \Z$ the support of $c_0(y)$. Provided that $c_0(y)$ is not a monomial, each root $y_j$ defines a hyperplane $\{ \tilde c_0(y_j) = 0 \}$ in the space $\CC^{C_0}$ of coefficients of $c_0$. Let $g_0 := \gcd(A_2, C_0)$. In particular, we have $e \mid g_0$, and hence $U_e \subset U_{g_0}$ acts on $\pi_1(\ttor)^\cQ$.

Consider the map $\{ y_1, \ldots, y_m \} \to P(\C^{W_1})$ that assigns to each root $y_j$ of $q_0$ the hypersurface $\{ c_0(y_j) = 0 \}$ in the space of polynomials $w(x,y)$. Since $q_0$ is chosen generically, it is an elementary exercise to check that this map is $g_0$-to-1. Denote by $H_1, \ldots, H_k$ the collection of hyperplanes in the image of this map. For each $H_j$, we can construct a loop $\ell_j: t \mapsto (w_t, q_t)$ such that the winding number of $t \mapsto w_t$ around $H_i$ equals the Kronecker delta $\delta_{i,j}$. The image of the loops $\ell_j$ in $B_W$ generates the subgroup of $\pi_1(\ttor)^\cQ$ invariant under the action of $U_{g_0}$.

Repeating the same argument with the support $C_n$ of $c_n$, and letting $g_n := \gcd(A_2, C_n)$, we conclude that $B_W$ contains the $U_{g_n}$-invariant subgroup of $\pi_1(\ttor)^\cQ$. By construction, we have $\gcd(g_0, g_n) = e$. Consequently, the $U_{g_0}$- and $U_{g_n}$-invariant subgroups together generate the $U_e$-invariant subgroup of $\pi_1(\ttor)^\cQ$, which is precisely the kernel of the map $\bl \to B_{A_2}$. This concludes the proof that $B_W = \bl$.\smallskip

By Lemma \ref{lem:iso}, we obtain that $B_W = \bl$ implies $B_V = \bm$. Since the group $G_V$ is the image of $B_V$ under the surjective map $\pim: \bm \to \symmv$, we deduce that $G_V = \symmv$. \smallskip

Now assume that $A_1$ is sharp. In this case, we can write
\[
c_0(y) = c_0, \quad c_n(y) = c_n \cdot y^{me + \vt}, \quad \text{and} \quad q(y) = \tilde q(y^e),
\]
where $0 \leqslant  \vt < c$. The map $(x,y) \mapsto (x y^\vt, y^e)$ induces an isomorphism between $B_W$ and the braid monodromy group of the reducible and reduced system
\[
c_n x - (-1)^n c_0 = 0, \quad \tilde q(y) = 0.
\]
Clearly, the latter group is isomorphic to $\pi_1(\cC) \times B_{\tilde A_2}$.
\end{proof}

\bigskip
\textit{Acknowledgements.}
The authors are grateful to Rikard B\o{}gvad and Rolf K\"{a}llstr\"{o}m for stimulating discussions on isomonodromic deformations. We are also very grateful to the referee for a thorough reading and for valuable comments on earlier versions of the manuscript.
\medskip

\textit{Funding.}
The second author was partially supported by the Swedish Research Council through Grant No.~\href{https://www.vr.se/english/swecris.html?project=2023-04332_VR#/}{2023-04332}.

\bibliographystyle{plain}
\bibliography{Draft}

\begin{thebibliography}{10}

\bibitem{BIMS}
E.~Brugall\'{e}, I.~Itenberg, G.~Mikhalkin, and K.~Shaw.
\newblock Brief introduction to tropical geometry.
\newblock In {\em Proceedings of the {G}\"{o}kova {G}eometry-{T}opology
  {C}onference 2014}, pages 1--75. G\"{o}kova Geometry/Topology Conference
  (GGT), G\"{o}kova, 2015.

\bibitem{BRSY}
T.~Brysiewicz, J.~I. Rodriguez, F.~Sottile, and T.~Yahl.
\newblock Solving decomposable sparse systems.
\newblock {\em Numer. Algorithms}, 88(1):453--474, 2021.

\bibitem{Ch}
D.~{Cheniot}.
\newblock {Un th\'eor\`eme du type de Lefschetz}.
\newblock {\em {Ann. Inst. Fourier}}, 25(1):195--213, 1975.

\bibitem{DAS}
C.~D'Andrea and M.~Sombra.
\newblock A {P}oisson formula for the sparse resultant.
\newblock {\em Proc. Lond. Math. Soc. (3)}, 110(4):932--964, 2015.

\bibitem{DL}
I.~{Dolgachev} and A.~{Libgober}.
\newblock {On the fundamental group of the complement to a discriminant
  variety}.
\newblock {Algebraic geometry, Proc. Conf., Chicago Circle 1980, Lect. Notes
  Math. 862, 1-25 (1981).}, 1981.

\bibitem{DZ}
R.~Dvornicich and U.~Zannier.
\newblock Newton functions generating symmetric fields and irreducibility of
  {Schur} polynomials.
\newblock {\em Adv. Math.}, 222(6):1982--2003, 2009.

\bibitem{E19}
A.~Esterov.
\newblock Galois theory for general systems of polynomial equations.
\newblock {\em Compositio Mathematica}, 155(2):229–245, 2019.

\bibitem{EL}
A.~{Esterov} and L.~{Lang}.
\newblock {Braid monodromy of univariate fewnomials}.
\newblock {\em {Geom. Topol.}}, 25(6):3053--3077, 2021.

\bibitem{EL3}
A.~{Esterov} and L.~{Lang}.
\newblock {Bernstein-Kouchnirenko-Khovanskii with a symmetry}.
\newblock {\em arXiv e-prints}, page arXiv:2207.03923, July 2022.

\bibitem{EL2}
A.~Esterov and L.~Lang.
\newblock Sparse polynomial equations and other enumerative problems whose
  {Galois} groups are wreath products.
\newblock {\em Sel. Math., New Ser.}, 28(2):35, 2022.
\newblock Id/No 22.

\bibitem{ESV}
A.~{Esterov}, E.~{Statnik}, and A.~{Voorhaar}.
\newblock {Sparse curve singularities, singular loci of resultants, and
  Vandermonde matrices}.
\newblock {\em arXiv e-prints}, page arXiv:2301.07001, January 2023.

\bibitem{FN}
E.~Fadell and L.~Neuwirth.
\newblock Configuration spaces.
\newblock {\em Math. Scand.}, 10:111--118, 1962.

\bibitem{farbmarg}
B.~{Farb} and D.~{Margalit}.
\newblock {\em {A primer on mapping class groups.}}
\newblock Princeton, NJ: Princeton University Press, 2011.

\bibitem{GKZ}
I.M. Gelfand, M.M. Kapranov, and A.V. Zelevinsky.
\newblock {\em {Discriminants, resultants, and multidimensional determinants.
  Reprint of the 1994 edition.}}
\newblock {Modern Birkh\"auser Classics. Boston, MA: Birkh\"auser. x, 523~p.},
  2008.

\bibitem{HPPS}
C.~Haase, A.~Paffenholz, L.~C. Piechnik, and F.~Santos.
\newblock Existence of unimodular triangulations---positive results.
\newblock {\em Mem. Amer. Math. Soc.}, 270(1321):v+83, 2021.

\bibitem{Ha79}
J.~Harris.
\newblock Galois groups of enumerative problems.
\newblock {\em Duke Math. J.}, 46:685--724, 1979.

\bibitem{IMS}
I.~{Itenberg}, G.~{Mikhalkin}, and E.~{Shustin}.
\newblock {\em {Tropical algebraic geometry. 2nd ed.}}
\newblock Basel: Birkh\"auser, 2nd ed. edition, 2009.

\bibitem{Kad}
B.~Kadets.
\newblock Sectional monodromy groups of projective curves.
\newblock {\em J. Lond. Math. Soc., II. Ser.}, 103(1):314--335, 2021.

\bibitem{Kui}
K.~Kuiken.
\newblock On the monodromy groups of {Riemann} surfaces of genus zero.
\newblock {\em J. Algebra}, 59:481--489, 1979.

\bibitem{Lam}
S.~{Lambropoulou}.
\newblock {Braid structures in knot complements, handlebodies and 3-manifolds}.
\newblock In {\em Knots in Hellas '98. Proceedings of the international
  conference on knot theory an its ramifications, European Cultural Centre of
  Delphi, Greece, August 7--15, 1998}, pages 274--289. Singapore: World
  Scientific, 2000.

\bibitem{Lib21}
A.~Libgober.
\newblock Complements to ample divisors and singularities.
\newblock In {\em Handbook of geometry and topology of singularities II}, pages
  501--567. Cham: Springer, 2021.

\bibitem{Neu}
M.~G. Neubauer.
\newblock On primitive monodromy groups of genus zero and one. {I}.
\newblock {\em Comm. Algebra}, 21(3):711--746, 1993.

\bibitem{SY}
F.~Sottile and T.~Yahl.
\newblock Galois groups in enumerative geometry and applications.
\newblock {\em Expo. Math.}, 43(6):40, 2025.
\newblock Id/No 125731.

\end{thebibliography}

\vspace{2cm}
\noindent
A. Esterov\\
London Institute for Mathematical Sciences, UK.\\
\textit{Email}: aes@lims.ac.uk, alexander.esterov@gmail.com \\

\noindent
L. Lang\\
Department of Electrical Engineering, Mathematics and Science, University of G\"avle, Sweden.\\
\textit{Email}: lionel.lang@hig.se

\printindex

\end{document}